\definecolor{dkgreen}{rgb}{0,0.4,0}
\definecolor{dkred}{rgb}{0.8,0.0,0}
\newcommand{\MK}[1]{{\color{dkred}{#1}}}
\Crefname{ALC@unique}{Line}{Lines}
\crefname{hypothesis}{Hypothesis}{Hypotheses}
\newcommand{\myvec}[1]{\boldsymbol{\mathbf{#1}}}
\newcommand{\mymat}[1]{\mathbf{#1}}
\newcommand{\euler}{\mathrm{e}}
\newcommand{\ramuno}{\mathrm{i}}
\newcommand{\state}{\myvec{\upsilon}}
\newcommand{\dictionary}{\myvec{\psi}}
\newcommand{\KEFVector}{\myvec{\phi}}
\newcommand{\ssdim}{p}
\newcommand{\dictionarylength}{m}
\newcommand{\fundelayscount}{d}
\newcommand{\finitekissdim}{r}
\newcommand{\undelayedDMDorder}{n}
\newcommand{\delayedDMDorder}{\theta}
\newcommand{\koopman}{\(U\)}
\newcommand{\mean}{\myvec{\mu}}
\newcommand{\lti}{\textrm{LTI}}
\newcommand{\opt}[1]{#1^*}
\newcommand{\dummy}[1]{\hat{#1}}
\newcommand{\reduced}[1]{\tilde{#1}}
\newcommand{\KEFs}{\{\,\phi_i\,\}_{i=1}^\finitekissdim}
\newcommand{\KEvals}{ \{\,\lambda_i\,\}_{i=1}^\finitekissdim}
\newcommand{\DMDEvals}{\{\,\dummy{\lambda}_i\,\}_{i=1}^\undelayedDMDorder}
\newcommand{\DMDModes}{\{\,\dummy{\myvec{d}_i}\,\}_{i=1}^\undelayedDMDorder}
\newcommand{\muDMD}{\mean\textrm{DMD}}
\newcommand{\msubDMDEvals}{\{\,\reduced{\lambda}_i\,\}_{i=1}^\undelayedDMDorder}
\newcommand{\msubDMDmodes}{\{\,\reduced{\myvec{d}_i}\,\}_{i=1}^\undelayedDMDorder}
\newcommand{\delayedmuDMD}{\fundelayscount\muDMD}
\newcommand{\delayedmsubDMDEvals}{\{\,\check{\lambda}_i\,\}_{i=1}^\delayedDMDorder}
\newcommand{\delayedmsubDMDmodes}{\{\,\check{\myvec{d}_i}\,\}_{i=1}^\delayedDMDorder}
\newcommand{\DistanceToDFT}{\texttt{Relative distance to DFT}}
\newcommand{\obs}{\mathrm{obs}}
\newcommand{\manualQED}{\(\qquad\qquad\qquad \qquad\qquad\qquad \qquad\qquad\qquad \qquad\qquad\qquad \qquad\qquad\qquad \qquad\qquad \proofbox\)}
\newcommand{\PsiLiesInNRSpanOfrDistinctKEFs}{\hyperref[defn:NonRedundantSpanOfDistinctKEFs]{\(\dictionary\) lies in the non-redundant span of \(\finitekissdim\) distinct KEFs}}
\newcommand{\ICBeSpectrallyInformative}{\hyperref[defn:SpectrallyInformativeState]{the initial condition \(\state_1\) is spectrally informative}}
\newcommand{\SpectrallyInformative}{\hyperref[defn:SpectrallyInformativeState]{spectrally informative}}
\newcommand{\mypara}[1]{\paragraph{#1}}
\newcommand{\mysubpara}[1]{\subparagraph{#1}}
\newcommand{\EmphasiseReduction}[1]{{\color{blue} #1}}
\newcommand{\lowrankapprox}[1]{\left\llbracket#1\right\rrbracket}
\title{Clarifying the effect of mean subtraction on Dynamic Mode Decomposition}
\author{Gowtham S Seenivasaharagavan\thanks{Department of Mechanical Engineering,
		University of California Santa Barbara} \and Milan Korda\thanks{Methods and Algorithms for Control,	LAAS-CNRS} \and Hassan Arbabi \thanks{Department of Mechanical Engineering,	Massachusetts Institute of Technology} \and Igor Mezi{\'c}\thanks{Department of Mechanical Engineering,
		University of California Santa Barbara}}
\DeclareMathOperator{\diag}{diag}
\begin{document}
	\maketitle
	
	% REQUIRED
	\begin{abstract}
		Any autonomous nonlinear dynamical system can be viewed as a superposition of infinitely many linear processes, through the so-called Koopman mode decomposition.
		Its data-driven approximation- Dynamic Mode Decomposition (DMD)- has been extensively developed and deployed across a plethora of fields. 
		In this work, we study the effect of subtracting the temporal mean on the DMD approximation, for observables possessing only a finite number of Koopman modes.
		
		Pre-processing time-sequential training data by removing the temporal mean has been a point of contention in the Companion matrix formulation of DMD. 
		This stems from the potential of said pre-processing to render DMD equivalent to a temporal Discrete Fourier Transform (DFT). 
		We prove that this equivalence is impossible when the training data is linearly consistent and the order of the DMD model exceeds the number of Koopman modes. 
		Since model order and training set size are synonymous in this variant of DMD, the parity of DMD and DFT can, therefore, be indicative of inadequate training data.
		
	\end{abstract}
	
	% REQUIRED
	\begin{keywords}
		Koopman operator, Dynamic Mode Decomposition, Companion matrix, Mean subtraction, Discrete Fourier Transform.
	\end{keywords}
	
	% REQUIRED
	% Please fill this one out
	\begin{AMS}
		37M10, 37N10, 47N20
	\end{AMS}

	\tableofcontents
	\section{Introduction}\label{s:Introduction} 
    Models of nonlinear phenomena are particularly useful when they strike a balance between simplicity and generalizability.    
    Such balanced models can be constructed in a principled and rigorous fashion through the Koopman operator \cite{koopman1931hamiltonian,mezic2004comparison,mezic2005spectral}.
    This object is typically infinite-dimensional, but is always linear.
    Hence, the eigenfunctions of the Koopman operator can be used to build generalizable linear models of non-linear dynamics.
    Consequently, algorithms for computing these spectral quantities have been the focus of sustained research.
    A substantial fraction of these efforts concerns the so-called Dynamic Mode Decomposition (DMD) \cite{ rowley2009spectral, schmid2010dynamic, williams2015data}.

    Procedurally, DMD is simply a linear fit followed by an eigen-decomposition.
    Hence, it has been applied to data from a plethora of natural and engineered phenomena; see for example the survey \cite{brunton2021modern}.
    In parallel, theoretical investigations have probed the effect of time delays, choice of observables and convergence to Koopman spectral quantities- see \cite{korda2018convergence} as well as the reviews \cite{schmid2022dynamic} and \cite{otto2021koopman} for a detailed discussion.
    This work continues the conversation on the pragmatic issue of removing the temporal mean, and its theoretical implications for DMD.

	The infinite-time average, when it exists, is the leading term of the Koopman mode decomposition \cite{mezic2005spectral}. 
    Its removal brings focus on the temporally varying parts of the process, and has been adopted in a diverse set of analyses \cite{brunton2016extracting, proctor2015discovering, sarmast2014mutual, avila2020data}. 
    %TODO: Figure out how you want to address Companion DMD.
	Nonetheless, in practice, this step may render DMD equivalent to a temporal Discrete Fourier Transform (DFT) \cite{chen2012variants}. 
	The negative consequences of such an equivalence include the misrepresentation of dissipative dynamics and a bijection between the trajectory length and the DMD-based estimates of the Koopman eigenvalues. 

    The relationship between mean-subtracted DMD and DFT is substantially clarified in a subsequent work by Hirsch and co-workers \cite{hirsh2019centering}.
    Although their primary contribution is an \emph{alternative} to temporal mean removal, they also study the original pre-processing step in itself.
    First, they point out that the original work \cite{chen2012variants} only provides a \emph{sufficient} condition for the equivalence of mean-subtracted DMD and DFT.
    Then, assuming the observables used in DMD possess only a finite number of Koopman modes, they prove that the sufficiency condition is not satisfied when certain sub-matrices of the training set are connected by a linear map (linear consistency)
    and the training set size exceeds the number of Koopman modes (over-sampling).
    Finally, they provide numerical evidence that suggests linear consistency and over-sampling can actually negate DMD-DFT equivalence, i.e., be a sufficient condition for non-equivalence.   

    We begin by developing a \emph{necessary and sufficient condition} for the equivalence of mean-subtracted DMD and DFT (\cref{thm:musubNDFT}).
    Using this geometric condition, we construct counterexamples to show that the sufficient condition in \cite{chen2012variants} is not a necessary condition (\cref{thm:XmsnotLI_DMD_DFTequiv}).
    
    Then, we proceed to our primary contribution in \cref{thm:DMDDFT_Oversampled} - linear consistency\footnote{An appropriate number of time delays \cite{brunton2017chaos,arbabi2017ergodic} can ensure linear consistency.} and over-sampling prevent the equivalence of mean-subtracted DMD and DFT.
    This result can be paraphrased as follows:
   	\begin{theorem}\label{thm:MainResult__Coarsely}
    	Suppose we select a finite number of functions from the span of \(r\) Koopman eigen-functions with distinct eigenvalues. 
    	Say this collection of functions, aka dictionary, is used to observe the underlying discrete-time dynamical system at \(n+1\) sequential time instances, starting at an initial state that does not lie on the zero level set of any of the \(r\) eigenfunctions.
    	Let the resulting mean-subtracted DMD eigenvalues be denoted as \(\msubDMDEvals\).
 
    	If the time sequential observations are linearly consistent (\cref{defn:Linear_Consistency}) and there are more DMD eigenvalues \((\undelayedDMDorder)\) than Koopman eigenvalues \((\finitekissdim)\), then, mean-subtracted DMD is not a temporal DFT.
		\begin{displaymath}
    		\textrm{Linear consistency}~~\&~~ n \geq \finitekissdim + 1 \implies \msubDMDEvals ~~\neq~~ \{ z \neq 1~|~z^{\undelayedDMDorder+1}=1 \}.
    	\end{displaymath}
    \end{theorem}
    
    By the contrapositive, DMD-DFT equivalence can be used to decide if more data is needed to construct an accurate DMD model.
    Under the stronger condition of Koopman invariant observables, we also show the converse: DMD-DFT equivalence can result from a paucity of data (\cref{cor:DMDFT_Undersampled}).

    This paper is organized as follows: 
    \Cref{s:Notation} details the notation adopted in this work. 
    \Cref{s:Preliminaries} introduces the Koopman operator (\ref{ss:prelims_Koopman}) and Dynamic Mode Decomposition (\ref{ss:prelims_DMD}) before reviewing the effect of mean subtraction (\ref{ss:prelims_effect_of_msub}).
	Drawing on these, \Cref{s:New_MsubDMD_DFT} explores the relationship between mean subtracted DMD and DFT, resulting in an a posteriori check that alerts to data insufficiency. 
	\Cref{s:Numerics} sees the numerical verification of all major results on  linear time invariant systems (LTIs), the Van der Pol oscillator and the lid-driven cavity at various flow regimes. 
	\Cref{s:Conclusions} summarizes the findings and points out relevant questions that remain unanswered.
	\section{Notation}\label{s:Notation}
	The notation is mostly standard for our primary objects of interest: (finite) sets, vectors and matrices. 
	The elements of an object are written with subscripts that start at 0 or 1.
	The operation \(\#(\MK{\cdot}) \) returns the number of elements in the argument, as illustrated below.
	Sets are denoted with upper-case letters and are enclosed by curly braces. For example,
	\begin{displaymath}
	L ~:=~ \{\lambda_i\}_{i=1}^{r} ~=~ \{\lambda_1,\lambda_2,\cdots, \lambda_r\} ~\implies ~ \#(L) = r.
	\end{displaymath}
	
	Vectors are typically in \(\mathbb{C}^m\), labelled with bold-faced lower-case letters and represented as columns:
	\begin{displaymath}
	\myvec{v} ~=~ [v_i ]_{i=1}^{m} ~=~ \begin{bmatrix}
	v_1 \\ v_2 \\ \vdots\\ v_m
	\end{bmatrix} ~\implies~ \#(\myvec{v}) = m.
	\end{displaymath}
	The \(i\)-th standard basis vector is denoted as  \(\myvec{e}_i\):
	\begin{displaymath}
		\myvec{e}_1 ~=~
		\begin{bmatrix}
			1 \\ 0 \\ \vdots \\ 0
		\end{bmatrix},\quad
		\myvec{e}_2 ~=~
		\begin{bmatrix}
			0 \\ 1 \\ \vdots \\ 0
		\end{bmatrix}\quad \textrm{and}\quad
	\myvec{e}_m ~=~
	\begin{bmatrix}
		0 \\ 0 \\ \vdots \\ 1
	\end{bmatrix}.
	\end{displaymath}
	Constant vectors (those with only one unique coefficient) are represented by their value in boldface and dimension as a subscript.
	\begin{displaymath}
	\mathbf{1}_m = [1]_{i=1}^{m}.
	\end{displaymath}
	
	Matrices are in \( \mathbb{C}^{m \times n}\), denoted by bold-faced upper-case letters and written as ordered sets of columns.
	\begin{displaymath}
	\mymat{A} = [\myvec{a}_j ]_{j=1}^{n} = [a_{ij}]_{i,j=1}^{m,n} = \begin{bmatrix}
	\vline & \vline & &\vline\\
	\myvec{a}_1 & \myvec{a}_2 & \cdots & \myvec{a}_n \\
	\vline & \vline & &\vline
	\end{bmatrix} .
	\end{displaymath}
	The (Hermitian) transpose of \(\mymat{A}\) is written as (\(\mymat{A}^{H}\)) \(\mymat{A}^T\).
	The letter \(\mymat{I}\) is reserved for the identity matrix.
	\begin{displaymath}
	\mymat{I}_{n} ~=~ [\myvec{e}_j]_{j=1}^{n} ~=:~ [\delta_{ij}]_{i,j=1}^n.
	\end{displaymath}
	Constant matrices are written similar to their vector counterparts:
	\begin{displaymath}
	\mathbf{0}_{m \times n} = [\mathbf{0}_m]_{j=1}^{n} = [0]_{i,j=1}^{m,n}.
	\end{displaymath}
	The expression \(\mymat{T}[\myvec{c}]\) represents the companion matrix with 1 on the first sub-diagonal and \(\myvec{c}\) as the last column. 
    \begin{displaymath}
        \myvec{c}~\in~\mathbb{C}^n~\implies~\mymat{T}[\myvec{c}]~=~
        \begin{bmatrix} 
          & & & & c_1 \\
        1 & & & & c_2 \\
          & 1 & & & c_3 \\
          & & \ddots  & & \vdots \\
          & & & 1& c_n \\
        \end{bmatrix}.
    \end{displaymath}
    
	Sub-spaces are written in script. For instance, \(\mathcal{R}(\mymat{Z})\) and \(\mathcal{N}(\mymat{Z})\) denote the column-space and null-space of \(\mymat{Z}\) respectively.
	Orthogonal projectors play a crucial role in this work. 
	Under the standard inner product on \(\mathbb{C}^m\), \(\mathcal{P}_{\mathcal{W}}\) denotes the orthogonal projector onto the subspace \(\mathcal{W}\).
	If we let \(\mymat{A}^\dagger\) denote the Moore-Penrose pseudo-inverse of \(\mymat{A}\), we get:
	\begin{displaymath}
	\mathcal{P}_{\mathcal{R}(\mymat{A})} ~=~\mymat{A} \mymat{A}^\dagger, \quad \mathcal{P}_{\mathcal{N}(\mymat{A}^H)} ~=~ \mymat{I} - \mymat{A} \mymat{A}^\dagger.
	\end{displaymath}
	
	\section{Preliminaries}\label{s:Preliminaries}
    We provide an overview of the ideas needed to understand and build upon existing work regarding the effect of mean-subtraction on DMD.
    We begin with a brief introduction to the Koopman operator (\Cref{ss:prelims_Koopman}) and its approximation via the Dynamic Mode Decomposition (\Cref{ss:prelims_DMD}).
    Then, we consider augmenting DMD with a pre-processing step of mean-subtraction and discuss the concomitant issue of DMD-DFT equivalence (\Cref{ss:prelims_effect_of_msub}).    
    Subsequently, to facilitate a more detailed exposition, we introduce several technical constructs, including the notions of linear consistency (\cref{defn:Linear_Consistency}) and over-sampling (\cref{defn:SamplingRegimes}).
    Finally, we review the most relevant theoretical contribution from \cite{hirsh2019centering} (\cref{thm:ChenSuffMayNotHold}), highlight its shortcomings and set the stage for its resolution.
	\subsection{The Koopman operator}\label{ss:prelims_Koopman}
	Consider an autonomous discrete time dynamical system in \(\mathbb{R}^p\).
	\begin{equation}\label{eq:DS4SDMD}
	\state^{+} = \Gamma (\state) , \; \state \in \mathbb{R}^p.
	\end{equation}
	The map \(\Gamma\) describes how states (individual elements of \(\mathbb{R}^\ssdim\)) evolve in time- an admittedly geometric description.
	A complementary perspective is embodied by the so-called Koopman operator which describes the evolution of \emph{functions} on the state-space (aka observables).
	\begin{definition}[Koopman operator \cite{koopman1931hamiltonian}]\label{defn:Koopman_operator}
		Suppose \(\mathcal{H}\) denotes a vector space of complex-valued functions on \(\mathbb{R}^\ssdim\).
		If \(\mathcal{H}\) is closed under composition with \(\Gamma\) i.e.,
		\begin{displaymath}
			\forall~~\psi \in \mathcal{H},\quad \psi \circ \Gamma \in \mathcal{H},
		\end{displaymath}
		then, the Koopman operator, \(U\), associated to (\ref{eq:DS4SDMD}) is defined as follows:
	\begin{equation}\label{eq:Koopman_definition}
		\begin{aligned}
			U : \mathcal{H} &\rightarrow \mathcal{H} \\
			U \circ \psi &:= \psi \circ \Gamma.
		\end{aligned}
	\end{equation}
	\end{definition}	
	The function-space \(\mathcal{H}\) is usually infinite-dimensional.
	Even though this complexity is inherited by the Koopman operator, the saving grace is its' linearity:
	\begin{displaymath}
		\begin{aligned}
			U \circ (\alpha_1 \psi_1 + \alpha_2 \psi_2)
			~&=~
			(\alpha_1 \psi_1 + \alpha_2 \psi_2) \circ \Gamma \\
			~&=~
			\alpha_1~ \EmphasiseReduction{\psi_1 \circ \Gamma} ~+~ \alpha_2~ \EmphasiseReduction{\psi_2 \circ \Gamma} \\
			~&=~
			\alpha_1~ U \circ \psi_1 ~+~ \alpha_2~ U \circ \psi_2.
		\end{aligned}
	\end{displaymath}
	Pushing this further, we can look for eigen-functions i.e. functions that are merely scaled under the action of \koopman.
	\begin{definition}[Koopman eigen-functions (KEFs)]\label{defn:Koopman_Eigenfunctions}
		The function \(\phi\) is said to be an eigen-function of the Koopman operator \(U\) with eigen-value \(\lambda\) if and only if the following holds:
		\begin{equation}
			U \circ \phi  = \lambda ~\phi.
		\end{equation}
	\end{definition}
 	A most pertinent property is that linearity allows the eigen-functions\footnote{We assume that the choice of \(\mathcal{H}\) is such that non-trivial eigen-functions exist.} of \(U\) to describe the evolution of any function in their span.
 	We can formalize such an expansion as follows:
	\begin{definition}[Koopman Mode Decomposition (KMD) \cite{mezic2004comparison,mezic2005spectral}]\label{defn:Koopman_Mode_Decomposition}
		Suppose the function \(f\) lies in the span of the Koopman eigen-functions \(\{\phi_i\}_{i \in \mathcal{I}}\) with eigen-values \(\{\lambda_i\}_{i \in \mathcal{I}}\).
		Then, there exist numbers \(\{c_i\}_{i\in \mathcal{I}}\) such that
		\begin{displaymath}
			f ~=~ \sum_{i \in \mathcal{I}} c_i~\phi_i.
		\end{displaymath}
		The quantities \(\{c_i\}_{i\in \mathcal{I}}\) are termed the Koopman modes of \(f\) associated with eigen-values \(\{\lambda_i\}_{i \in \mathcal{I}}\).
		In addition, for any whole number \(n\), the action of the Koopman operator \(U\) is given by a super-position of its' actions on the constituent eigen-functions.
		\begin{displaymath}
			U^n \circ f ~=~ U^n \circ \left( \sum_{i \in \mathcal{I}} c_i~\phi_i \right)
			~=~ \sum_{i \in \mathcal{I}} c_i~ \left( U^n \circ  \phi_i \right)
			~=~ \sum_{i \in \mathcal{I}} c_i~ \lambda_i^n \phi_i .
		\end{displaymath} 
	\end{definition}
	To paraphrase, the Koopman mode decomposition  explains temporal observations of a potentially \emph{nonlinear} process \cref{eq:DS4SDMD} through a  \emph{super-position} of exponentials. 
    Unsurprisingly, many algorithms have been designed to approximate this spectral expansion, with DMD being the most prominent.
	
	\subsection{Dynamic Mode Decomposition}\label{ss:prelims_DMD}
	
	Dynamic Mode Decomposition \cite{rowley2009spectral} is an algorithm that takes time-sequential observations and returns estimates of the constituent Koopman eigen-values and modes.
	The input to DMD is generated by first choosing \(\dictionarylength\) observables of interest (collectively referred to as the dictionary), \(\{\psi_i\}_{i=1}^m \), evaluating them along an orbit, \(\{\Gamma^{j-1}\left(\state_1\right)\}_{j=1}^{\undelayedDMDorder+1}\), and collecting the resulting observations (aka snapshots) as the columns of a matrix \(\mymat{Z}\):
	\begin{equation}\label{eq:Zmat4CompDMD}
		\dictionary := [\psi_i]_{i=1}^\dictionarylength, 
		\quad 
		\mymat{Z} ~:=~ \left[ \dictionary\left( \Gamma^{j-1}\left(\state_1\right) \right) \right]_{j=1}^{\undelayedDMDorder+1}.
	\end{equation}
	DMD formulates a least squares problem using specific sub-matrices of \(\mymat{Z}\), uses the result to construct a Companion matrix whose eigen-values approximate those of the Koopman operator and eigen-vectors lead to estimates of the associated Koopman modes- a procedure that is detailed as \cref{alg:CompanionDMD}.
	\begin{algorithm}
		\caption{Dynamic Mode Decomposition (DMD)}
		\label{alg:CompanionDMD}
		\begin{algorithmic}[1]
			\REQUIRE{Time-series data \(\mymat{Z}\) from \cref{eq:Zmat4CompDMD}.}
			\STATE{Partition \(\mymat{Z}\) as follows:}
 			\begin{equation}\label{eq:Z__equals__X__z_nplus1}
				\mymat{Z} ~=:~ 
				\begin{bmatrix}
					\mymat{X} &|& \myvec{z}_{\undelayedDMDorder+1}
				\end{bmatrix}.
			\end{equation}
			\STATE{Find the best approximation of \(\myvec{z}_{\undelayedDMDorder+1}\) in the span of \(\mymat{X}\).}
			\begin{equation}\label{eq:Optimal_1StepPredictor}
				\opt{\myvec{c}}[\mymat{Z}] ~:=~\mymat{X}^\dagger \myvec{z}_{\undelayedDMDorder+1}.
			\end{equation}
			\STATE{Form the Companion matrix associated to the DMD model \(\opt{\myvec{c}}[\mymat{Z}]\).}
			\begin{equation}\label{eq:Optimal__CompanionMatrix}
				\opt{\mymat{T}}~:=~ \mymat{T}\left(\opt{\myvec{c}}[\mymat{Z}]\right)
			\end{equation}
			\STATE{Compute the eigenvalues, \(\DMDEvals\), and eigen-vectors, \(\{\myvec{v}_i\}_{i=1}^\undelayedDMDorder\) of \(\opt{\mymat{T}}\). }
			\begin{equation}\label{eq:defn__DMD_Eigenvalues}
				\opt{\mymat{T}} \myvec{v}_i~=~\dummy{\lambda}_i \myvec{v}_i
			\end{equation}
			\STATE{Construct the DMD mode, \(\dummy{\myvec{d}_i}\), corresponding to each DMD eigenvalue, \(\dummy{\lambda}_i\).}
			\begin{displaymath}
				\dummy{\myvec{d}_i} ~:=~\mymat{X} \myvec{v}_i.
			\end{displaymath}
			\ENSURE{DMD eigen-values,  \(\DMDEvals\), and their modes, \(\DMDModes\).}
		\end{algorithmic}
	\end{algorithm}

	While this approximation has been studied in \cite{mezic2020numerical}, we work in a much simpler setting that permits the following guarantee purely from linear algebraic arguments:
	\begin{theorem}\label{thm:Suff4CompDMD2CatchAllNKnow}		
		Suppose \PsiLiesInNRSpanOfrDistinctKEFs~ and \ICBeSpectrallyInformative.
		
		If the matrix pair \((\mymat{X},\mymat{Y})\) is \hyperref[defn:Linear_Consistency]{linearly consistent} and there are at-least as many DMD eigenvalues \((\undelayedDMDorder)\) as there are Koopman eigenvalues \((\finitekissdim) \), then, the Koopman eigenvalues are a subset of the DMD eigen-values:
		\begin{displaymath}
			(\mymat{X},\mymat{Y})~\textrm{is linearly consistent}~\&~\undelayedDMDorder \geq \finitekissdim \implies \KEvals \subseteq \DMDEvals.
		\end{displaymath}
		Furthermore, a DMD mode is non-zero if and only if the associated eigenvalue lies in the Koopman spectrum.
		Specifically, if \(\myvec{v}_{\mu}\) is the eigenvector of \( \opt{\mymat{T}}\) corresponding to eigenvalue \(\mu\)\footnote{\(\mu\) is a scalar quantity and is not to be confused with the vectorial quantity \(\myvec{\mu}\) which is the temporal mean.},
		\begin{displaymath}
			\opt{\mymat{T}} \myvec{v}_{\mu}~=~\mu\,\myvec{v}_{\mu},
		\end{displaymath}
		then,
		\begin{displaymath}
			\mymat{X} \myvec{v}_{\mu} \neq \myvec{0} ~\iff~ \mu \in \KEvals.
		\end{displaymath}
	\end{theorem}
	A hyper-parameter of DMD that is fundamental to this and subsequent analyses is  the number of DMD eigen-values, \(\undelayedDMDorder\).
	It determines the complexity of the dynamics that can be captured by the DMD model \(\opt{\myvec{c}}[\mymat{Z}]\) - a property formally recognized as follows:	
	\begin{definition}[Order of a DMD model]\label{defn:Companion_order}
		Given the time-series data \(\mymat{Z} \in \mathbb{C}^{\dictionarylength \times (\undelayedDMDorder+1)}\), the order of the resulting DMD model is defined thus:
		\begin{equation}
			\textrm{Companion-order}[\mymat{Z}] ~:=~ n.
		\end{equation}
	\end{definition}

	\subsection{Mean subtracted Dynamic Mode Decomposition (\(\muDMD\))}\label{ss:prelims_effect_of_msub}
	Mean subtraction is a useful pre-processing step for DMD, when the time series, \(\mymat{Z}\), represents the solution to certain partial differential equations (PDEs) \cite{chen2012variants}. 
	In this amendment, elaborated here as \cref{alg:MeanSubtractedDMD}, one simply removes the temporal mean of \(\mymat{Z}\) from each of its' columns before performing DMD.
	\begin{algorithm}
		\caption{Mean-subtracted Dynamic Mode Decomposition (\(\muDMD\))}
		\label{alg:MeanSubtractedDMD}
		\begin{algorithmic}[1]
			\REQUIRE{Time-series data \(\mymat{Z}\) from \cref{eq:Zmat4CompDMD}.}
			\STATE{Compute the temporal mean, \(\myvec{\mu}\) of the time-series data \(\mymat{Z}\).}
			\begin{equation}\label{eq:Defn_Temporal_Mean}
				\myvec{\mu} := \frac{1}{n+1}\,  \sum_{i = 1}^{n+1} \myvec{z}_i .
			\end{equation}
			\STATE{Remove the temporal mean from each snapshot i.e. column of \(\mymat{Z}\).}
			\begin{equation}\label{eq:Define__Z_ms}
				\mymat{Z}_{\rm ms} := [\myvec{z}_j-\myvec{\mu}]_{j=1}^{n+1}.
			\end{equation}
			\STATE{Use the mean-subtracted data, \(\mymat{Z}_{\rm ms}\), as the input for DMD (\cref{alg:CompanionDMD}).}
			\begin{equation}\label{eq:MeanSubtractedDMD_EvalsModes}
					\msubDMDEvals,~\msubDMDmodes
				~~\longleftarrow~~
				\textrm{DMD}\left(\mymat{Z}_{\rm ms}\right)
			\end{equation}
			\ENSURE{Mean-subtracted DMD eigen-values, \(\msubDMDEvals\), and their modes, \(\msubDMDmodes\).}
		\end{algorithmic}
	\end{algorithm}
	Such a pre-processing enables Koopman eigen-functions with eigenvalue \(1\) to be \emph{exactly} forecast.
	\begin{definition}[Forecasting with \(\muDMD\)]
		Let \(\mymat{Z}_{\rm test}\) denote the time series generated by observing the trajectory beginning at \(\state_{\rm test}\) for \(\undelayedDMDorder\) time steps:
		\begin{equation}
			\mymat{Z}_{\rm test} ~:=~ \left[ \dictionary\left( \Gamma^{j-1}\left( \state_{\rm test} \right) \right)  \right]_{j=1}^{\undelayedDMDorder+1}.
		\end{equation}
		In other words, the \(j\)-th column of \(\mymat{Z}_{\rm test}\), denoted \( (\myvec{z}_{\rm test})_j\), is defined thus:
		\begin{equation}
			(\myvec{z}_{\rm test})_j ~:=~ \dictionary\left( \Gamma^{j-1}\left( \state_{\rm test} \right) \right).
		\end{equation}
		Additionally, let \(\myvec{\mu}_{\rm test}\) denote the temporal mean of \(\mymat{Z}_{\rm test}\) i.e.,
		\begin{equation}\label{eq:Defn__Test_Temporal_Mean}
			\myvec{\mu}_{\rm test} ~:=~ \frac{1}{n+1}\,  \sum_{j = 1}^{n+1} 
			(\myvec{z}_{\rm test})_j.
		\end{equation}
		Then, \(\muDMD\) forecasts the \(j\)-th column of \(\mymat{Z}_{\rm test}\) thus:
		\begin{equation}\label{eq:forecast_muDMD}
			(\reduced{\myvec{z}}_{\rm test})_j ~:=~ \\
			\begin{cases}
				(\myvec{z}_{\rm test})_j, \quad&j~=~1,\dots,\undelayedDMDorder.\\
				\\
				\myvec{\mu}_{\rm test} ~+~ 
				\begin{bmatrix}
					(\reduced{\myvec{z}}_{\rm test})_{j - \undelayedDMDorder + k} - \myvec{\mu}_{\rm test}
				\end{bmatrix}_{k=0}^{\undelayedDMDorder-1}~
				~
				\opt{\myvec{c}}[\mymat{Z}_{\rm ms}], \quad & j~>~\undelayedDMDorder.
			\end{cases}
		\end{equation}
		\end{definition}
	\begin{theorem}[\(\muDMD\) perfectly forecasts constant KEFs]\label{thm:muDMD_preserves_constant_KEFs}
	Suppose we have a vector of observables, \(\mymat{E}^H\dictionary\), that remains constant along every trajectory i.e.,
	\begin{equation}\label{eq:Abstracted_BCs}
			\forall~(\state_{\rm test},j),\quad \mymat{E}^H (\mymat{z}_{\rm test})_j ~=~ \mymat{E}^H (\mymat{z}_{\rm test})_1.
		\end{equation}
	Then, this property is respected by the \(\muDMD\) forecasts as well:
	\begin{equation}\label{eq:msub_forecasts__preserve__BCs}
			\forall~(\state_{\rm test},j), \quad \mymat{E}^H (\reduced{\myvec{z}}_{\rm test})_j ~=~ \mymat{E}^H (\reduced{\myvec{z}}_{\rm test})_1.
		\end{equation}		
	\end{theorem}
	\begin{proof}
		Refer \Cref{s:Proof__muDMD_preserves_KEFs_at_1}.
	\end{proof}
	Consequently, when the underlying dynamical system \cref{eq:DS4SDMD} is derived from a PDE, the associated boundary conditions are always satisfied by the \(\muDMD\) forecasts. 

   	Yet, this innocuous modification \emph{may} compromise DMD-based estimates of the Koopman spectrum.
	\begin{lemma}\label{lem:ChenSufficiency4c_ms}\cite{chen2012variants}
	Consider the analogue of \cref{eq:Z__equals__X__z_nplus1} for the mean-subtracted time series \(\mymat{Z}_{\rm ms}\):
	\begin{equation}\label{eq:Define__X_ms}
		\mymat{Z}_{\rm ms} ~=:~ 
		\begin{bmatrix}
			\mymat{X}_{\rm ms} &|& \myvec{z}_{\undelayedDMDorder+1} - \myvec{\mu}
		\end{bmatrix}.
	\end{equation}
    If \(\mymat{X}_{\rm ms}\) has linearly independent columns, then, the mean-subtracted DMD eigen-values are the \((n+1)\)-th roots of unity, modulo 1.
	\begin{displaymath}
		\mymat{X}_{\rm ms} ~\textrm{has full column rank}~~~\implies~~~
 		\msubDMDEvals ~~=~~ \{ z \neq 1~|~z^{\undelayedDMDorder+1}=1 \}.
	\end{displaymath}
	\end{lemma}
	There are three concerning aspects of this result:
	\begin{enumerate}
		\item \underline{Broad applicability}:
		The requirement of linearly independent columns in \(\mymat{X}_{\rm ms}\) is met when \(\mymat{Z}\) has full column rank, which is often the case when the snapshots are slices of the solution to a PDE.
		\item \underline{Content independence}:
		The mean-removed DMD eigenvalues are entirely determined by the parameter \(\undelayedDMDorder\) and, thus, have no regard for the information contained in \(\mymat{Z}_{\rm ms}\).
		\item \underline{Reduction to a Discrete Fourier Transform (DFT)}:
		The DMD eigen-values are restricted to be the \((n+1)\)th roots of unity, modulo 1.
        As a result, the mean-subtracted DMD modes, \( \msubDMDmodes \), coincide with the Fourier modes of the original time-series, \(\mymat{Z}\).
        More specifically, if \(\omega\) denotes the \((\undelayedDMDorder+1)\)-th root of unity,
        \begin{displaymath}
        	\omega ~:=~ \euler^{\ramuno\left(\frac{2 \pi}{\undelayedDMDorder+1}\right)},
        \end{displaymath}
    	and \(\reduced{\myvec{f}}_i\) represents the \(i\)-th Fourier mode of \(\mymat{Z}\),
    	\begin{displaymath}
    		\reduced{\myvec{f}}_i ~:=~  \frac{1}{\undelayedDMDorder+1}\,  \sum_{j = 1}^{\undelayedDMDorder+1} \myvec{z}_j\, \omega^{-i(j-1)},
    	\end{displaymath}
    	then, we have
    	\begin{displaymath}
    		\reduced{\lambda}_i ~=~\omega^i ~~\&~~\reduced{\myvec{d}_i} ~=~ \reduced{\myvec{f}}_i, \quad \forall\quad i~\in~\{1,\hdots,\undelayedDMDorder\}. 
    	\end{displaymath}
	\end{enumerate}
	The last observation above suggests that the restriction of \(\muDMD\) eigenvalues, which is described in \cref{lem:ChenSufficiency4c_ms}, could be formalized as follows:
	\begin{definition}[DMD-DFT equivalence]\label{def:DMD_DFT_Equivalence}
	We say that mean-subtracted DMD (\cref{alg:MeanSubtractedDMD}) is equivalent to DFT when the eigenvalues output by it coincide with the \((n+1)\)th roots of unity, modulo 1.
	 \begin{displaymath}
	   \muDMD \equiv \textrm{DFT}  \iff \msubDMDEvals ~~=~~ \{ z \neq 1~|~z^{\undelayedDMDorder+1}=1 \}.
	 \end{displaymath}
	\end{definition}
	Since the discovery of DMD-DFT equivalence, alternative variants of DMD have been developed to achieve the goal of mean subtraction (boundary-condition aware model reduction) without inducing DMD-DFT equivalence \cite{chen2012variants,hirsh2019centering}.
	Yet, theoretical investigations of this phenomenon, by itself, have been few and far between.
	We now focus on one such work \cite{hirsh2019centering} that stands out in its efforts to clarify DMD-DFT equivalence.
	
	\subsubsection{Towards untangling the equivalence of \(\muDMD\) to DFT}\label{sss:prelims_msub_state_of_art}
	Hirsch and co-workers \cite{hirsh2019centering} provide a clear exposition of the work by Chen et.al. \cite{chen2012variants} and derive sufficient conditions for \(\mymat{X}_{\rm ms}\) to be rank defective. 
	Understanding their contributions necessitates the Koopman mode decomposition of \(\dictionary\) be quantified (\cref{defn:NonRedundantSpanOfDistinctKEFs}).
	This facilitates comparing the system complexity with the amount of training data (\cref{defn:SamplingRegimes}) and grading the dynamical information contained in the trajectory starting at \(\state_1\) (\cref{defn:SpectrallyInformativeState}).
	Combining these formalisms with a notion of ``ideal'' training data (\cref{defn:Linear_Consistency}) underpins the result of Hirsch et.al. (\cref{thm:ChenSuffMayNotHold}) as well as the contributions in this work.
	
	Our primary assumption is that the Koopman mode expansion of \(\dictionary\) be finite.
	\begin{definition}[Dictionary non-redundantly spanned by distinct KEFs]\label{defn:NonRedundantSpanOfDistinctKEFs}
		The dictionary \(\dictionary\) is said to be in the non-redundant span of \(\finitekissdim\) distinct KEFs if there exists a collection of Koopman eigen-functions, \(\KEFs\), with distinct Koopman eigen-values, \(\KEvals\), such that 
		\begin{enumerate}
			\item Every component of \(\dictionary\) lies in the span of \(\KEFs\).
			\begin{equation}\label{eq:defn__Ctilde}
				\KEFVector ~:=~ [\phi_i]_{i=1}^\finitekissdim \implies \exists~ \tilde{\mymat{C}} \in \mathbb{C}^{\dictionarylength \times \finitekissdim}~\textrm{such that}~\dictionary~=~\tilde{\mymat{C}} \KEFVector.
			\end{equation}
			\item The expansion of \(\dictionary\) in terms of \(\KEFVector\) possesses no redundancies.
			\begin{equation}\label{eq:Every_column_in_CTilde_is_NonZero}
				\forall~i,\quad \tilde{\mymat{C}}\myvec{e}_i \neq \myvec{0}.
			\end{equation}
		\end{enumerate}
	\end{definition}
	When \(\dictionary\) is non-redundantly spanned by \(\finitekissdim\) distinct KEFs, capturing the \(\finitekissdim\) concomitant Koopman modes necessitates a minimum model order of \(\finitekissdim\).
	Now, by \cref{defn:Companion_order}, the model order, \(\undelayedDMDorder\), also quantifies the amount of training data available.
	Hence, the number of distinct KEFs, \(\finitekissdim\), can be used to score the usefulness of the snapshot matrix, \(\mymat{Z}\), in building a generalizable model.
	\begin{definition}[Sampling regimes in DMD]\label{defn:SamplingRegimes}
		Suppose \(\dictionary\) is non-redundantly spanned by \(\finitekissdim\) distinct KEFs. 
		Then, we may grade the (in)sufficiency of the training data, \(\mymat{Z}\), for imbibing the DMD model, \(\opt{\myvec{c}}\), with generalizability as follows:
 		 \begin{enumerate}
			\item Under-sampled \((n < r)\): 
			The order of the DMD model is inadequate for describing the \(\finitekissdim\)-term Koopman mode expansion.
			\item Well-sampled \( (\undelayedDMDorder \geq \finitekissdim) \):
			\begin{enumerate}
				\item Just-sampled \((n=r)\):
				The DMD model has the exact order needed to capture the \(\finitekissdim\) distinct Koopman eigen-values.        
				\item Over-sampled \((n \geq r + 1)\):
				The order of the DMD model is more than sufficient to represent all \(\finitekissdim\) Koopman eigen-values.
			\end{enumerate}
		\end{enumerate}
	\end{definition}
	Despite an abundance of training data, DMD can fail to capture all the underlying Koopman modes  if the initial condition, \(\state_1\), lies on the zero level set of any KEF \(\phi_i\).
	Such dynamically simple states are excluded by the condition of ``spectral informativeness''.
	\begin{definition}[Spectrally informative state]\label{defn:SpectrallyInformativeState}
	Suppose \(\dictionary\) is non-redundantly spanned by \(\finitekissdim\) distinct KEFs \(\KEFs\).
	Then, the state \(\state\) is said to be spectrally informative if it does not lie on the zero level set of any KEF.
	\begin{displaymath}
		\state~\textrm{is spectrally informative}~\iff~\forall~i,~\phi_i(\state)~\neq~0.
	\end{displaymath}
	\end{definition}
	Finally, we introduce a requirement on sub-matrices of \(\mymat{Z}\) that is necessary for the generalizability of DMD.
	\begin{definition}[Linear consistency \cite{tu2013dynamic}]\label{defn:Linear_Consistency}
		Let \(\mymat{Y}\) denote the sub-matrix of \(\mymat{Z}\) corresponding to the last \(\undelayedDMDorder\) columns:
		\begin{equation}\label{eq:Definition__Y}
			\mymat{Z} ~=:~ 
			\begin{bmatrix}
				\myvec{z}_1 & \mymat{Y}
			\end{bmatrix}.
		\end{equation}
		The matrix pair \((\mymat{X}, \mymat{Y})\) is said to be linearly consistent if there exists a linear transform \(\mymat{A}\) that maps \(\mymat{X}\) to \(\mymat{Y}\).
		\begin{displaymath}
			(\mymat{X},\mymat{Y}) \textrm{ is linearly consistent } \iff \exists ~\mymat{A}~\textrm{such that}~ \mymat{A}\mymat{X} ~=~\mymat{Y}.
		\end{displaymath}
	\end{definition}
	With this machinery in place, the contribution of \cite{hirsh2019centering} that forms the starting point of our work reads thus:
	\begin{theorem}\label{thm:ChenSuffMayNotHold}
		Suppose \(\dictionary\) lies in the non-redundant span of \(\finitekissdim\) KEFs with distinct eigen-values \(\KEvals\) and the initial condition \(\state_1\) is spectrally informative.
		Furthermore, assume that one of these eigen-values takes the value of 1, none of them is 0 and  the matrix \(\mymat{X}\) has non-zero mean:
		\begin{displaymath}
			1 \in \KEvals,~0 \notin \KEvals ~\&~\mymat{X} \mathbf{1}_n \neq \mathbf{0}_m.
		\end{displaymath}
		
		If the matrix pair \((\mymat{X},\mymat{Y})\) is linearly consistent and the input to DMD is well-sampled, then, the columns of \(\mymat{X}_{\rm ms}\) are not linearly independent.
		\begin{displaymath}
			(\mymat{X},\mymat{Y})~\textrm{is linearly consistent}~\&~\undelayedDMDorder \geq \finitekissdim \implies \mymat{X}_{\rm ms}~\textrm{doesn't have full column rank}.
		\end{displaymath}
	\end{theorem}
	Although this greatly improves our understanding, the story is still incomplete. 
	One cannot infer non-equivalence of mean-subtracted DMD and DFT, from the rank deficiency of \(\mymat{X}_{\rm ms}\).
	Specifically, \cref{thm:ChenSuffMayNotHold}, in conjunction with \cref{lem:ChenSufficiency4c_ms}, falls short of producing a necessary condition for \(\muDMD \equiv \textrm{DFT}  \).
		
	We address this shortcoming through a necessary and sufficient condition (\cref{thm:musubNDFT}) for DMD-DFT equivalence that is weaker than asking for linear independence of the columns of \(\mymat{X}_{\rm ms}\).
	Consequently, we can generate concrete examples\footnote{Certain well-characterized linear dynamical systems observed through a tailored dictionary using specific choices of the model order} where DMD-DFT equivalence is observed despite \(\mymat{X}_{\rm ms}\) possessing linearly dependent columns (\cref{thm:XmsnotLI_DMD_DFTequiv}).
		
	Nonetheless, a numerical experiment in \cref{thm:ChenSuffMayNotHold} indicates conditions under which DMD-DFT equivalence is precluded.
	Indeed, Figure 4a in \cite{hirsh2019centering} suggests that a spectrally informative initial condition, well-sampling, linear consistency and non-zero mean may prevent \(\muDMD \equiv \textrm{DFT}  \).
		
	We formally prove a refined version of this implicit conjecture in \cref{thm:DMDDFT_Oversampled}.

%---------------------------------:|----------------------------	
	
	\section{Exploring the what and where of DMD-DFT equivalence}\label{s:New_MsubDMD_DFT} 

    We begin by developing a necessary and sufficient condition for the equivalence of mean-subtracted DMD and DFT (\cref{thm:musubNDFT}).
    Building on this result, we find that \cref{lem:ChenSufficiency4c_ms} is a stricter sufficient condition for DMD-DFT equivalence and show that its converse does not hold (\cref{thm:XmsnotLI_DMD_DFTequiv}).
    Then, we study the relationship between mean-subtracted DMD and temporal DFT as a function of the DMD model order\footnote{See \cref{defn:Companion_order}}.
    We find that the occurrence of DMD-DFT equivalence is, for all practical purposes, dictated by the model order, \(\undelayedDMDorder\), and the number of Koopman modes, \(\finitekissdim\), comprising the DMD observables \(\dictionary\) (\cref{thm:DMDDFT_Oversampled,cor:DMDFT_Undersampled}).
    Finally, we demonstrate that the requirements for the above results -linearly consistent data, and to a lesser extent, a Koopman invariant \(\dictionary\) - can be met in practice using time delay embedding (\cref{prop:DelaysMakeGoodObservables}).

    \subsection{A geometric characterization of DMD-DFT equivalence}
	\begin{theorem}[A necessary and sufficient condition for DMD-DFT equivalence]\label{thm:musubNDFT}
		Mean subtracted DMD of \(\mymat{Z}\) is equivalent to temporal DFT if and only if the constant vector is orthogonal to \(\mathcal{N}(\mymat{X}_{\rm ms})\), that is, 
		\begin{displaymath}
		\muDMD \equiv \textrm{DFT}  \iff \mathcal{P}_{\mathcal{N}(\mymat{X}_{\rm ms})} \mathbf{1}_n = 0.
		\end{displaymath}
	\end{theorem}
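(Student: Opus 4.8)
The plan is to exploit the defining property of the minimum-norm least-squares solution together with the fact, already noted in the text, that $-\mathbf{1}_n$ is itself a solution of the constraint in \cref{eq:MeanSubDMD}. Recall that $\myvec{c}_{\rm ms}[\mymat{Z}] = \mymat{X}_{\rm ms}^\dagger \myvec{y}_{\rm ms}$ is the least-norm element of the affine solution set $-\mathbf{1}_n + \mathcal{N}(\mymat{X}_{\rm ms})$, and that by the basic properties of the pseudo-inverse it lies in $\mathcal{R}(\mymat{X}_{\rm ms}^H) = \mathcal{N}(\mymat{X}_{\rm ms})^\perp$. The whole statement will follow by writing this least-norm solution as an orthogonal projection of the particular solution $-\mathbf{1}_n$ and reading off the condition under which the two coincide.

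First I would confirm that $-\mathbf{1}_n$ solves $\mymat{X}_{\rm ms}\myvec{c} = \myvec{y}_{\rm ms}$. This is immediate from the definition of the temporal mean \cref{eq:Defn_Temporal_Mean}: since $(n+1)\myvec{\mu} = \sum_{j=1}^{n+1}\myvec{z}_j$, we have $\mymat{X}_{\rm ms}(-\mathbf{1}_n) = -\sum_{j=1}^{n}(\myvec{z}_j - \myvec{\mu}) = \myvec{z}_{n+1} - \myvec{\mu} = \myvec{y}_{\rm ms}$. In particular this reconfirms the consistency $\myvec{y}_{\rm ms} \in \mathcal{R}(\mymat{X}_{\rm ms})$ used in \cref{eq:MeanSubDMD}, so that the whole affine set of optima is nonempty.

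The key step is the projection characterisation. For any affine set $\myvec{p}_0 + \mathcal{W}$, its least-norm element is $\mathcal{P}_{\mathcal{W}^\perp}\myvec{p}_0$, because the minimizer must be orthogonal to the direction space $\mathcal{W}$. Taking $\myvec{p}_0 = -\mathbf{1}_n$ and $\mathcal{W} = \mathcal{N}(\mymat{X}_{\rm ms})$, and using $\mathcal{P}_{\mathcal{N}(\mymat{X}_{\rm ms})^\perp} = \mymat{I} - \mathcal{P}_{\mathcal{N}(\mymat{X}_{\rm ms})}$, gives
\[
\myvec{c}_{\rm ms}[\mymat{Z}] = \mathcal{P}_{\mathcal{N}(\mymat{X}_{\rm ms})^\perp}(-\mathbf{1}_n) = \left(\mymat{I} - \mathcal{P}_{\mathcal{N}(\mymat{X}_{\rm ms})}\right)(-\mathbf{1}_n) = -\mathbf{1}_n + \mathcal{P}_{\mathcal{N}(\mymat{X}_{\rm ms})}\mathbf{1}_n .
\]

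Reading off this identity yields the claim: $\myvec{c}_{\rm ms}[\mymat{Z}] = -\mathbf{1}_n$ holds precisely when $\mathcal{P}_{\mathcal{N}(\mymat{X}_{\rm ms})}\mathbf{1}_n = 0$, which by \cref{def:DMD_DFT_Equivalence} is exactly $\myvec{\mu}\textrm{DMD}[\mymat{Z}] \equiv \textrm{DFT}$. There is no genuine analytic obstacle here; the only points requiring care are justifying the projection formula for the pseudo-inverse solution and verifying that $-\mathbf{1}_n$ is an admissible particular solution, both of which are standard. The value of the result is that it recasts the least-norm property as a transparent geometric (orthogonality) condition, which is exactly the form that is convenient to check in the counterexample constructions of \cref{thm:XmsnotLI_DMD_DFTequiv} and in the over-sampling analysis that follows.
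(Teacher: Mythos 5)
Your proposal is correct and follows essentially the same route as the paper: both arguments identify the feasible set as the affine subspace $-\mathbf{1}_n + \mathcal{N}(\mymat{X}_{\rm ms})$, characterize the minimum-norm solution as the orthogonal projection of $-\mathbf{1}_n$ onto $\mathcal{N}(\mymat{X}_{\rm ms})^\perp = \mathcal{R}(\mymat{X}_{\rm ms}^H)$ via the Pythagorean decomposition, and then read off the equivalence from the identity $\myvec{c}_{\rm ms}[\mymat{Z}] = -\mathbf{1}_n + \mathcal{P}_{\mathcal{N}(\mymat{X}_{\rm ms})}\mathbf{1}_n$. The only cosmetic difference is that you verify the particular solution $-\mathbf{1}_n$ explicitly and invoke the least-norm projection fact as a standard lemma, whereas the paper derives it inline.
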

	\begin{proof}
		Refer \Cref{ss:Appendix_Proof_NecessaryNSufficient_msubDMDDFTequivalence}
	\end{proof}
	Thus, DMD-DFT equivalence is ensured when the constant vector has no projection onto \(\mathcal{N}(\mymat{X}_{\rm ms})\) - A less stringent requirement than asking for a full column rank \(\mymat{X}_{\rm ms}\)\footnote{ \cref{lem:ChenSufficiency4c_ms} can be obtained from \cref{thm:musubNDFT} by noting that \(\mathcal{N}(\mymat{X}_{\rm ms}) = {0}\) when \(\mymat{X}_{\rm ms}\) has full column rank.}.
	This relaxation guides us towards situations where \(\muDMD \equiv \textrm{DFT} \), despite \(\mymat{X}_{\rm ms}\) having linearly dependent columns.
	\begin{theorem}[Counter-example that disproves the converse of \Cref{lem:ChenSufficiency4c_ms}]\label{thm:XmsnotLI_DMD_DFTequiv}
		Suppose we have \(\finitekissdim\) KEFs, \(\KEFs\), corresponding to distinct eigen-values \(\KEvals\).
		Let the initial condition \(\state_1\) be spectrally informative about these \(\finitekissdim\) KEFs.
		Furthermore, assume that none of these eigen-values takes the value of \(1\) and the model order \((\undelayedDMDorder)\) lies between \(2\) and \(r\):
		\begin{displaymath}
			1 \notin \KEvals ~~~\&~~~2 \leq \undelayedDMDorder\leq \finitekissdim.
		\end{displaymath}   
        
        Then, there is an explicitly constructible dictionary \(\dictionary\) such that the resulting matrix \(\mymat{X}_{\rm ms}\) possesses linearly dependent columns and, yet, mean-subtracted DMD is rendered equivalent to temporal DFT:
        \begin{displaymath}
        	\exists~\dictionary~\textrm{such that}~\mymat{X}_{\rm ms}~\textrm{doesn't have full column rank}~\&~\muDMD \equiv \textrm{DFT}.
        \end{displaymath}   
	\end{theorem}
	\begin{proof}
	Refer \Cref{ss:Appendix_Proof_Counterexamples_4_Hirsch_numerics_interpretation}
	\end{proof}
	Therefore, rank defectiveness of \(\mymat{X}_{\rm ms}\) does not preclude DMD-DFT equivalence for the time series \(\mymat{Z}\).
    It is worth noting that linear inconsistency of \((\mymat{X},\mymat{Y})\) and limited data are key ingredients in \cref{thm:XmsnotLI_DMD_DFTequiv}.
    Pursuing this line of thought, we find that the parameter regime where both conditions fail is devoid of DMD-DFT equivalence (\cref{thm:DMDDFT_Oversampled}).

    \subsection{Charting the domain of DMD-DFT equivalence}
    We probe the effect of Companion-order[\(\mymat{Z}\)] i.e. \(n\) on DMD-DFT equivalence for the time series \(\mymat{Z}\).
    We begin with the data-rich regime \((n >r)\) where linear consistency prevents DMD-DFT equivalence.
	\begin{theorem}\label{thm:DMDDFT_Oversampled}
		Suppose \(\dictionary\) lies in the non-redundant span of \(\finitekissdim\) distinct KEFs and the initial condition \(\state_1\) is spectrally informative.
    	
		If the matrix pair \((\mymat{X},\mymat{Y})\) is linearly consistent and the input to DMD is over-sampled, then, mean-subtracted DMD is not a temporal DFT.
		\begin{displaymath}
		(\mymat{X},\mymat{Y})~\textrm{is linearly consistent}~\&~ n \geq \finitekissdim + 1 \implies \muDMD \not\equiv \textrm{DFT} .
		\end{displaymath}
	\end{theorem}
	\begin{proof}
		Refer \Cref{ss:Appendix_Proof_None_If_Oversampled_and_LinCon}
	\end{proof}
    Thus, \cref{thm:ChenSuffMayNotHold} can be extended by mildly strengthening the sampling condition, retaining linear consistency and dropping all other requirements.
	The contrapositive is of practical interest: Equivalence of mean-subtracted DMD and DFT may indicate a need to acquire more data for a reliable analysis. 
	
	Moving onto the other data regimes \((\undelayedDMDorder \leq \finitekissdim)\), we see that the training set is limited and hence must be vigilant to avoid over-fitting.
	In particular, the notion of linear consistency, while numerically verifiable, may not lead to good generalization.
	This shortcoming can be addressed if every observable in the dictionary \(\dictionary\), when acted upon by the Koopman operator \koopman, continues to remain in the span of \(\dictionary\).
	\begin{definition}[Koopman invariant dictionary]\label{defn:Koopman_invariant_psi}
		Suppose we extend  \Cref{defn:Koopman_operator} in the obvious manner to operate on vector-valued observables i.e.,
		\begin{equation}\label{eq:Koopman__on__VectorObservables}
			\forall~i, \quad (U \circ \dictionary)_i ~:=~\psi_i \circ \Gamma.
		\end{equation}
		Then, Koopman invariance of \(\dictionary\) is defined thus:	\begin{displaymath}
		\dictionary \textrm{ is Koopman invariant } \iff \exists ~\mymat{A}~\textrm{such that}~ U \circ \dictionary ~=~ \mymat{A} \dictionary.
	\end{displaymath}
	\end{definition}
	In other words, Koopman invariance of \(\dictionary\) means linear consistency along any trajectory.

	Subsequently, we can draw upon \cite{chen2012variants} to complete the picture on DMD-DFT equivalence.
	\begin{corollary}\label{cor:DMDFT_Undersampled}
		Suppose \(\dictionary\) lies in the non-redundant span of \(\finitekissdim\) KEFs with distinct eigen-values \(\KEvals\) and the initial condition \(\state_1\) is spectrally informative.
		
		If \(\dictionary\) is also Koopman invariant, then, the following hold in the just and under-sampled regimes:
		\begin{enumerate}
            \item When \(n=r\), mean-subtracted DMD is equivalent to temporal DFT if and only if \(1\) is not a Koopman eigenvalue.
            \begin{displaymath}
                \textrm{When}~n = r,~\muDMD \equiv \textrm{DFT}   \iff  1 \notin \KEvals.
            \end{displaymath}
			\item When \(n < r\) i.e. the system is under-sampled, mean-subtracted DMD is equivalent to temporal DFT.
            \begin{displaymath}
                n < r \implies \muDMD \equiv \textrm{DFT} . 
            \end{displaymath}
		\end{enumerate}
	\end{corollary}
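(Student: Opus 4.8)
The plan is to reduce the whole statement to the null space of a mean-subtracted Vandermonde matrix and then apply the geometric criterion of \cref{thm:musubNDFT}. First I would exploit Koopman invariance of $\myvec{\psi}$: by \cref{prop:ManyFacesofLinCon} the matrix $\tilde{\mymat{C}}$ has full column rank, and since every $\phi_i(\myvec{\upsilon}_1)\neq 0$ the column scaling in \cref{eq:C_definition} is invertible, so $\mymat{C}$ has full column rank $r$. Writing the temporal mean as $\myvec{\mu} = \mymat{C}\myvec{\nu}$ with $\myvec{\nu} := \frac{1}{n+1}\mymat{\Theta}\,\gc_{n+1}[1]$, the factorization \cref{eq:Data4CompanionDMD_with LTI} gives $\mymat{X}_{\rm ms} = \mymat{C}\big(\mymat{\Theta}_1 - \myvec{\nu}\,\mathbf{1}_n^{T}\big)$. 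Full column rank of $\mymat{C}$ then yields $\mathcal{N}(\mymat{X}_{\rm ms}) = \mathcal{N}(\mymat{\Theta}_1 - \myvec{\nu}\,\mathbf{1}_n^{T})$, so by \cref{thm:musubNDFT} equivalence is controlled entirely by the $r\times n$ mean-subtracted Vandermonde matrix.

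Next I would translate membership in this null space into a polynomial vanishing condition. To a vector $\myvec{w}\in\mathbb{C}^n$ I associate $p_{\myvec{w}}(x) := \sum_{k=1}^n w_k x^{k-1}$, of degree at most $n-1$, and set $q(x) := \sum_{k=0}^n x^k$. Then $(\mymat{\Theta}_1\myvec{w})_i = p_{\myvec{w}}(\lambda_i)$, the $i$-th entry of $\myvec{\nu}$ is $\frac{1}{n+1}q(\lambda_i)$, and crucially $\mathbf{1}_n^{T}\myvec{w} = p_{\myvec{w}}(1)$. Hence $\myvec{w}$ lies in the null space precisely when $g_{\myvec{w}} := p_{\myvec{w}} - \frac{p_{\myvec{w}}(1)}{n+1}\,q$ vanishes at all $r$ distinct Koopman eigenvalues $\lambda_1,\dots,\lambda_r$. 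This is the key reformulation; the remainder is degree bookkeeping.

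For the under-sampled case $n<r$, $g_{\myvec{w}}$ has degree at most $n<r$ yet vanishes at $r$ distinct points, forcing $g_{\myvec{w}}\equiv 0$, i.e. $p_{\myvec{w}} = \frac{p_{\myvec{w}}(1)}{n+1}q$. Comparing the degree-$n$ coefficient (that of $q$ is $1$, that of $p_{\myvec{w}}$ is $0$) forces $p_{\myvec{w}}(1)=0$ and then $p_{\myvec{w}}\equiv 0$, so $\myvec{w}=\mathbf{0}$. Thus $\mathcal{N}(\mymat{X}_{\rm ms})=\{\mathbf{0}\}$, the matrix $\mymat{X}_{\rm ms}$ has full column rank, and \cref{lem:ChenSufficiency4c_ms} (equivalently \cref{thm:musubNDFT} with trivial null space) gives $\myvec{c}_{\rm ms}[\mymat{Z}]=-\mathbf{1}_n$, proving part 2.

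For the just-sampled case $n=r$, $g_{\myvec{w}}$ has degree at most $r$, and I would split on $s:=p_{\myvec{w}}(1)=\mathbf{1}_n^{T}\myvec{w}$. If $s=0$ then $\deg g_{\myvec{w}}\le r-1$ vanishes at $r$ points, so $\myvec{w}=\mathbf{0}$; hence every nonzero null vector has $s\neq 0$. If $s\neq 0$ then $g_{\myvec{w}}$ is forced to equal $-\frac{s}{n+1}\prod_{i=1}^r(x-\lambda_i)$, and the self-consistency requirement $p_{\myvec{w}}(1)=s$ reduces, using $q(1)=n+1$, to $g_{\myvec{w}}(1)=0$, i.e. $-\frac{s}{n+1}\prod_i(1-\lambda_i)=0$, which holds iff $1\in\sigma(\mymat{\Lambda})$. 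Therefore if $1\notin\sigma(\mymat{\Lambda})$ the null space is trivial and equivalence holds, whereas if $1\in\sigma(\mymat{\Lambda})$ there is a nonzero null vector with $\mathbf{1}_n^{T}\myvec{w}=s\neq 0$, so $\mathcal{P}_{\mathcal{N}(\mymat{X}_{\rm ms})}\mathbf{1}_n\neq 0$ and, by \cref{thm:musubNDFT}, equivalence fails. This establishes the biconditional of part 1. I expect the main obstacle to be the bookkeeping in the $n=r$ analysis: recognizing that $\mathbf{1}_n^{T}\myvec{w}=p_{\myvec{w}}(1)$ is automatic rather than a separate equation, and that it collapses precisely to the condition $g_{\myvec{w}}(1)=0$ that detects whether $1$ is a Koopman eigenvalue.
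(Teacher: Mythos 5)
Your proposal is correct, and it takes a genuinely different route from the paper. The paper proves this corollary through its four-case taxonomy of mean subtraction (\cref{tab:4_possibilities_of_msub}), built on the factorization \(\mymat{Z}_{\rm ms} = \mymat{C}_{\rm ms}\mymat{\Theta}_{\rm ms}\) and a chain of auxiliary results (\cref{lem:Zero_mu_iff_Case_III}, \cref{prop:C_without_static_mode}, \cref{prop:LinearConsistency_Restricts_KernelOfCms}, \cref{thm:non_eq_when_1_IN_Lambdams}); most strikingly, the case \(n=r\), \(1\in\sigma(\mymat{\Lambda})\) with successful mean removal is handled by reinterpreting \(\mymat{Z}_{\rm ms}\) as data from an \((r-1)\)-dimensional invariant subspace and invoking the over-sampling result \cref{thm:DMDDFT_Oversampled}. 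You instead factor \(\mymat{X}_{\rm ms} = \mymat{C}\bigl(\mymat{\Theta}_1 - \myvec{\nu}\,\mathbf{1}_n^{T}\bigr)\), use injectivity of \(\mymat{C}\) to reduce everything to the null space of a rank-one-corrected Vandermonde matrix, and convert null-space membership into the vanishing of \(g_{\myvec{w}} = p_{\myvec{w}} - \frac{p_{\myvec{w}}(1)}{n+1}q\) at the \(r\) distinct eigenvalues; degree counting then settles both regimes, with the role of the eigenvalue \(1\) exposed by the identity \(g_{\myvec{w}}(1)=0\) forcing \(\prod_i(1-\lambda_i)=0\). Your argument is more elementary and self-contained — it needs only \cref{thm:musubNDFT}, \cref{prop:ManyFacesofLinCon}, and \cref{lem:ChenSufficiency4c_ms}, and it also delivers full column rank of \(\mymat{X}_{\rm ms}\) in the equivalence cases, hence the \emph{complete} equivalence the paper remarks on afterwards — whereas the paper's heavier machinery is reused elsewhere (for \cref{thm:DMDDFT_Oversampled} and the mean-subtraction efficacy results), so neither supersedes the other. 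One small completion you should make explicit: in the case \(n=r\), \(1\in\sigma(\mymat{\Lambda})\), the existence of the nonzero null vector is asserted but not constructed; it follows by reversing your forced-form analysis, e.g.\ take \(p_{\myvec{w}} := \frac{s}{n+1}\bigl(q - \prod_{i=1}^r(x-\lambda_i)\bigr)\) for any \(s\neq 0\), whose leading terms cancel so \(\deg p_{\myvec{w}}\le n-1\), and whose value at \(1\) is \(s\) precisely because \(\prod_i(1-\lambda_i)=0\), making \(g_{\myvec{w}} = -\frac{s}{n+1}\prod_i(x-\lambda_i)\) vanish at every node.
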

	\begin{proof}
		Refer \Cref{ss:Appendix_Proof_Under_Just_Sampled}
	\end{proof}

     Thus, we have a complete understanding of DMD-DFT equivalence, as defined in \cref{def:DMD_DFT_Equivalence}, when the observables are Koopman invariant. 
     Barring the pragmatically negligible case of just-sampling (\(n=r\)), under-sampling is necessary and sufficient for the equivalence of DMD and DFT. 

 \subsection{DMD-DFT equivalence and delay embedding}
     Linear consistency of \((\mymat{X},\mymat{Y})\) and, to a smaller extent, Koopman invariance of  \(\dictionary\) have been vital to our study of DMD-DFT equivalence.
     While these properties may not always hold, a simple remedy\footnote{Assuming one has enough snapshots at hand.} is to delay embed the training data \(\mymat{Z}\) before performing \(\muDMD\) (\cref{prop:DelaysMakeGoodObservables}).
     This strategy of prefacing \cref{alg:MeanSubtractedDMD} by taking time-delays, dubbed \(\delayedmuDMD\) (\cref{alg:Delay__MeanRemove__DMD}), permits \cref{thm:DMDDFT_Oversampled,cor:DMDFT_Undersampled} to be rephrased without drawing upon linear consistency or Koopman invariance (\cref{cor:DMD_DFT_Practical}).
\subsubsection{Koopman invariance via time delays}
	\begin{proposition}\label{prop:DelaysMakeGoodObservables}
	Suppose \PsiLiesInNRSpanOfrDistinctKEFs.
    Consider augmenting \(\dictionary\) by taking \(d\) time delays:
 	\begin{displaymath}
    	\dictionary_{\rm d-delayed} ~:=~
    	\begin{bmatrix}
    		\dictionary \\ U \circ \dictionary \\ \vdots \\ U^d \circ \dictionary
    	\end{bmatrix}.
    \end{displaymath}
    If at least \(\finitekissdim-1\) time delays are taken, then, the augmented dictionary is Koopman invariant.
	\begin{displaymath}
	d \geq r-1~ \implies ~ \dictionary_{\rm d-delayed} ~\textrm{ is Koopman invariant}.
	\end{displaymath}
	\end{proposition}

	\begin{proof}
		Refer \Cref{ss:TimeDelays_Gib_KoopmanInvariance}.	
	\end{proof}
	\begin{remark}
  		Although \(r-1\) delays are suggested, \(\dictionary\) may generate a Koopman invariant dictionary with fewer delays.
		Specifically, vector valued observables (i.e., \(m > 1 \) ) may attain Koopman invariance for \( d< r-1\), as described in Lemma C of \cite{le2017higher} and refined in Theorem 2 of \cite{pan2020structure}.
		In contrast, a scalar valued observable needs an absolute minimum of \(r-1\) delays.
		In the forthcoming analysis, for the sake of simplicity, we use the potentially conservative prescription in \cref{prop:DelaysMakeGoodObservables} to ensure Koopman invariance.
	\end{remark}

\subsubsection{DMD-DFT equivalence under delay embedding}
	Consider the variant of \(\muDMD\) that results from delay-embedding the observables \(\dictionary\) before performing \cref{alg:MeanSubtractedDMD}- a  process that is detailed as \cref{alg:Delay__MeanRemove__DMD}.
	\begin{algorithm}
		\caption{Delay-embedded \(\muDMD\) (\(\delayedmuDMD\))}
		\label{alg:Delay__MeanRemove__DMD}
		\begin{algorithmic}[1]
			\REQUIRE{Time-series data \(\mymat{Z}\) from \cref{eq:Zmat4CompDMD} and the number of time-delays \(\fundelayscount\).}
			\STATE{Construct the delay-embedded matrix \(\mymat{Z}_{\rm d-delayed}\) by taking \(d\) time-delays of \(\mymat{Z}\).}
			\begin{equation}\label{eq:Define__Z_d_delayed}
				\mymat{Z}_{\rm d-delayed} := \begin{bmatrix}
					\myvec{z}_1 & \myvec{z}_2 & \dots  & \myvec{z}_{n+1-d}\\
					\myvec{z}_2 & \myvec{z}_3 & \dots  & \myvec{z}_{n+2-d}\\
					\vdots &\vdots  & \ddots  & \vdots \\
					\myvec{z}_{1+d} & \myvec{z}_{2+d} & \dots  & \myvec{z}_{n+1}\\
				\end{bmatrix}.
			\end{equation}
			\STATE{Compute the associated model order,}
   			\begin{equation}\label{eq:theta_order_of_Z_d_delayed}
				\theta~:=~\textrm{Companion-order}[\mymat{Z}_{\rm d-delayed}].
			\end{equation}
			\STATE{Use the delay-embedded data, \(\mymat{Z}_{\rm d-delayed}\), as input for \cref{alg:MeanSubtractedDMD}.}
			\begin{equation}
				\delayedmsubDMDEvals,~\delayedmsubDMDmodes
				~~\longleftarrow~~
				\muDMD\left(\mymat{Z}_{\rm d-delayed}\right)
			\end{equation}
			\ENSURE{\(\delayedmuDMD\) eigen-values, \(\delayedmsubDMDEvals\), and their modes, \(\delayedmsubDMDmodes\).}
		\end{algorithmic}
	\end{algorithm}
    Porting our findings on DMD-DFT equivalence to this strategy requires that we extend the notions of under, just and over-sampling to \(\mymat{Z}_{\rm d-delayed}\).
    The hyper-parameter, \(\undelayedDMDorder\), which determines the sampling regime in \cref{alg:CompanionDMD,alg:MeanSubtractedDMD}, also happens to be the associated model order (\cref{defn:Companion_order}).
    This suggests that generalizing the sampling regimes to \cref{alg:Delay__MeanRemove__DMD} should build upon its model order, \(\theta\)\footnote{Which, by \cref{defn:Companion_order,eq:Define__Z_d_delayed}, has a value of \(n-d\).}.
    In particular, under-sampling would translate to \(\theta < r\), just-sampling would mean \(\theta = r\) and over-sampling would denote \(\theta > r\).
	Hence, we can clarify the notion of DMD-DFT equivalence for \(\delayedmuDMD\):
	\begin{definition}\label{def:dmuDMD__equiv_DFT}
		We say that delay-embedded \(\muDMD\) (\cref{alg:Delay__MeanRemove__DMD}) is equivalent to DFT when the eigenvalues output by it coincide with the \((\theta+1)\)th roots of unity, modulo 1.
		\begin{displaymath}
			\delayedmuDMD \equiv \textrm{DFT}  \iff \delayedmsubDMDEvals ~~=~~ \{ z \neq 1~|~z^{\theta+1}=1 \}.
		\end{displaymath}
	\end{definition}
	Consequently, the more pragmatic version of \cref{thm:DMDDFT_Oversampled} and \cref{cor:DMDFT_Undersampled},  where time delays are used to meet the relatively abstract conditions of linear consistency and Koopman invariance, reads thus:
    \begin{corollary}\label{cor:DMD_DFT_Practical}
        Suppose \(\dictionary\) lies in the non-redundant span of \(\finitekissdim\) KEFs with distinct eigen-values \(\KEvals\) and the initial condition \(\state_1\) is spectrally informative.
        
        If at least \(r-1\) time delays have been taken, i.e. \(d \geq r-1\), then, \(\delayedmuDMD\) reduces to a DFT if and only if we are under-sampled  or in the just-sampled regime without a Koopman eigenvalue at 1.
    \begin{displaymath}
        \delayedmuDMD \equiv \textrm{DFT} \iff (\theta~<~r)~\textrm{OR}~(\theta~=~r~\&~1 \not \in \KEvals).
    \end{displaymath}
    \end{corollary}

	To summarize, when Koopman invariant observables are used, oversampling is (practically) necessary and sufficient to preclude the equivalence of mean-subtracted DMD and DFT. 
	The requisite Koopman invariance can be attained by delay embedding the snapshots.

    \section{Numerical experiments}\label{s:Numerics}
    The guarantees on DMD-DFT equivalence, in \cref{cor:DMD_DFT_Practical}, can be computationally illustrated and tested\footnote{All code used in this analysis can found at \url{https://github.com/gowtham-ss-ragavan/msub_mdselect_dmd.git}}, despite being abstract and fragile.
    The former drawback can be addressed by translating the mathematical statements into concrete computational tasks, and the latter by testing the translated guarantees on an ensemble of trajectories (\Cref{ss:Numerics__Preamble}).
    The resulting framework for numerically probing \cref{cor:DMD_DFT_Practical} is first deployed in scenarios where all the attendant conditions are known to be met (\Cref{ss:Systems_Known_ISS}).
    Subsequently, we repeat the same experiments in situations where the requisites of \cref{cor:DMD_DFT_Practical} may not be met, and draw upon its' contrapositive to generate lower bounds on the number of Koopman modes required to represent the observations (\Cref{ss:Systems_Unknown_ISS}).
    
    \subsection{Preamble: Translation and trial-by-ensemble}\label{ss:Numerics__Preamble}
    \subsubsection{A computation-friendly reformulation of \cref{cor:DMD_DFT_Practical}}
    Validating \cref{cor:DMD_DFT_Practical} requires that we assess its' predictions about the presence or absence of DMD-DFT equivalence, over an appropriate range of model orders.
    Alas, by \Cref{def:dmuDMD__equiv_DFT}, the associated predictions take the form of set (in)equalities- properties that cannot be reliably discerned when the underlying computations use finite precision arithmetic.
    
    So, we re-phrase those predictions using \cref{thm:musubNDFT} in terms of a real-valued and numerically tangible indicator named \(\DistanceToDFT\).
    We begin by rephrasing \cref{thm:musubNDFT} in the context of \(\delayedmuDMD\).
    % Equivalence of duDMD and DFT using nullspace
    \begin{lemma}[Recasting  \(\delayedmuDMD \equiv \textrm{DFT} \) as a vector equality]\label{lem:delayedmuDMD_DFT__VectorEquality}
	    Let \(\myvec{\mu}_{\rm d-delayed}\) denote the temporal mean of \(\mymat{Z}_{\rm d-delayed}\),
    	\begin{displaymath}
    		\myvec{\mu}_{\rm d-delayed} := \mymat{Z}_{\rm d-delayed} \frac{\myvec{1}_{\theta+1}}{ \theta+1},
    	\end{displaymath}
    	and \((\mymat{Z}_{\rm d-delayed})_{\rm ms}\) represent the mean-subtracted version of \(\mymat{Z}_{\rm d-delayed}\):
    	\begin{equation}\label{eq:Z_d_delayed_ms_Defn}
    		(\mymat{Z}_{\rm d-delayed})_{\rm ms} := \mymat{Z}_{\rm d-delayed} - \myvec{\mu}_{\rm d-delayed} \myvec{1}_{\theta+1}^T.
    	\end{equation}
 		Additionally, let \((\mymat{X}_{\rm d-delayed})_{\rm ms}\) be the sub-matrix corresponding to the first \(\theta\) columns of \((\mymat{Z}_{\rm d-delayed})_{\rm ms}\):
   		\begin{displaymath}
   			(\mymat{Z}_{\rm d-delayed})_{\rm ms} =: 
   			\begin{bmatrix}
   				(\mymat{X}_{\rm d-delayed})_{\rm ms} &|&
   				(\mymat{Z}_{\rm d-delayed})_{\rm ms}\, \myvec{e}_{\theta+1}
   			\end{bmatrix}.
   		\end{displaymath} 
    	
    	Then, a necessary and sufficient condition for delay-embedded \(\muDMD\) to be equivalent to a temporal DFT is that the constant vector, \(\myvec{1}_{\theta}\), be orthogonal to the null-space of \((\mymat{X}_{\rm d-delayed})_{\rm ms}\):
    	\begin{displaymath}
    		\delayedmuDMD \equiv \textrm{DFT}
    		~~\iff~~
    		\mathcal{P}_{\mathcal{N}\left(\,(\mymat{X}_{\rm d-delayed})_{\rm ms}  \,\right)}\left(\myvec{1}_{\theta}\right) ~=~ \myvec{0}.
    	\end{displaymath}
    \end{lemma}
    % Definition of Relative Distance to DFT.
    As such, the projection of \(\myvec{1}_{\theta}\) onto the null-space of \((\mymat{X}_{\rm d-delayed})_{\rm ms}\) can measure the deviation of \(\delayedmuDMD\) from DFT:
    \begin{definition}[\DistanceToDFT]\label{defn:RelativeDistance_to_DFT}
    	\begin{equation}\label{eq:Numcheck4DMD_DFt_Equivalence}
    		\DistanceToDFT\,[\theta, d] := \frac{\lVert \mathcal{P}_{\mathcal{N}\left(\,(\mymat{X}_{\rm d-delayed})_{\rm ms}  \,\right)}\left(\myvec{1}_{\theta}\right) \rVert}{\sqrt{\theta}}.
    	\end{equation}
    \end{definition}
    Therefore, \(\DistanceToDFT\) takes values on the interval \([0,1]\), with \(0\) indicating DMD-DFT equivalence and any other value in \((0,1]\) representing non-equivalence.
    
    Consequently, we may recast \cref{cor:DMD_DFT_Practical} thus:
    \begin{corollary}\label{cor:DMD_DFT_Numerical}
    	Suppose \(\dictionary\) lies in the non-redundant span of \(\finitekissdim\) KEFs with distinct eigen-values \(\KEvals\) and the initial condition \(\state_1\) is spectrally informative.
    	
    	If at least \(r-1\) time delays have been taken, i.e. \(d \geq r-1\), then, the indicator \(\DistanceToDFT\) is completely determined by the sampling regime and spectral content, as detailed in \cref{tab:DMDInputs_to_RelativeDistance}.
    	\begin{table}[tbhp]\label{tab:DMDInputs_to_RelativeDistance}
    		\caption{\(\DistanceToDFT\) as a function of the sampling regime and the presence or absence of a Koopman eigen-value at \(1\).}
    		\begin{center}
    			\begin{tabular}{|c|c|c|} \hline
    				Sampling regime & Spectral condition & \(\DistanceToDFT\)\\ \hline
    				\(\theta < \finitekissdim\)& None  & \(0\) \\\hline
    				\(\theta = \finitekissdim\)& \(1 \not \in \KEvals\)  & \(0\) \\
    				& \(1 \in \KEvals\)  & \((0,1]\) \\\hline
    				\(\theta > \finitekissdim\)& None  & \((0,1]\) \\\hline
    			\end{tabular}
    		\end{center}
    	\end{table}
    \end{corollary}

   	\begin{remark}\label{rem:Only_computing_an_upper_bound}
		In computing \DistanceToDFT, we do not use the matrix \((\mymat{X}_{\rm d-delayed})_{\rm ms} \) directly as it becomes ill-conditioned for large values of \(d\).
		
		Instead, we work with \((\dummy{\mymat{X}}_{\rm d-delayed})_{\rm ms} \) which is the low rank approximation of \((\mymat{X}_{\rm d-delayed})_{\rm ms} \) obtained by retaining as many leading singular vectors as possible without the condition number of the approximant exceeding \(10^{8}\).
		Specifically, if we denote the best \(s\)-rank approximation of \((\mymat{X}_{\rm d-delayed})_{\rm ms}\) as \(\lowrankapprox{(\mymat{X}_{\rm d-delayed})_{\rm ms}}_s\), then, \((\dummy{\mymat{X}}_{\rm d-delayed})_{\rm ms} \) is defined as follows:
		\begin{equation}
			\begin{aligned}
			\opt{s}~~~~~~~~~~&:=~\max \left\{s~|~\textrm{Condition number}\left( \lowrankapprox{(\mymat{X}_{\rm d-delayed})_{\rm ms}}_s \right) \leq 10^{8} \right\} \\
			(\dummy{\mymat{X}}_{\rm d-delayed})_{\rm ms} ~&:=~ \lowrankapprox{(\mymat{X}_{\rm d-delayed})_{\rm ms}}_{\opt{s}}.
			\end{aligned}
		\end{equation}
		Here, we cap the condition number of the approximation at \(10^{8}\) to ensure that when the inputs of \(\delayedmuDMD\) are defined up to IEEE double precision of \(16\) digits, the \(\delayedmuDMD\) model, \(\opt{\myvec{c}}\left[(\mymat{Z}_{\rm d-delayed})_{\rm ms} \right]\), has at least \(8\) digits of precision \cite{trefethen2022numerical,corless2013graduate}.
		
		Hence, whenever \(\left\lVert (\mymat{X}_{\rm d-delayed})_{\rm ms}  - (\dummy{\mymat{X}}_{\rm d-delayed})_{\rm ms} \right \rVert\) is non-trivial, we end up computing only an upper bound on \DistanceToDFT.
	\end{remark}
    
    \subsubsection{Quantifying a theoretical loophole with ensemble studies}\label{sss:Ensemble_Studies}
    A non-zero value of \(\DistanceToDFT\) is useful only if it can be numerically distinguished from \(0\).
   
    Even if we do see this happen for a given trajectory, such an observation does not preclude the possibility of poor discernibility for other trajectories.
   
    Consequently, we systematically test \cref{cor:DMD_DFT_Numerical} for an ensemble of trajectories.
    For any choice of a dynamical system \cref{eq:DS4SDMD}, we pick an appropriate range of model-orders and delay embedding dimensions where \cref{cor:DMD_DFT_Numerical} is to be examined.
    For each choice of model-order \((\theta)\) and delay embedding dimension \((d)\), we compute \(\DistanceToDFT\) over an ensemble of trajectories and dictionaries.
    
    To see this in detail, suppose \(\theta_{\max}\) is the maximal model-order and \(d_{\max}\) the largest number of time delays to be taken.
    Consider the trajectory of length \(\theta_{\max} + d_{\max} + 1\) beginning at \(\state_1\) and observed through the vector of \(m\) observables \(\dictionary\).
    The length of the trajectory has been chosen so that we can compute \DistanceToDFT~ for every pair \((\theta,d)\) in the targeted parameter range.
    For the choice of \((\theta,d)\), the time series matrix \(\mymat{Z}\) reads thus:
    \begin{equation}\label{eq:Z_for_order_theta_delays_d}
     \mymat{Z} = \begin{bmatrix}
             \dictionary(\state_1) & \dictionary(\Gamma(\state_1)) & \dots  &  \dictionary(\Gamma^{\theta+d}(\state_1))
            \end{bmatrix}.
    \end{equation}
    To paraphrase, \(\mymat{Z}\) corresponds to sampling \(\dictionary\) on the first \(\theta + d+  1\) elements of the trajectory.
    Subsequently, we form \(\mymat{Z}_{\rm d-delayed}\) and compute \(\DistanceToDFT\).
    This calculation is, then, iterated over every pair \((\theta,d)\) in the parameter regime of interest.
    Finally, we also repeat this parametric sweep, at a higher level, over many trajectories\footnote{We also switch the dictionary for each trajectory for an added level of randomness.}.
    Consequently, we can study the statistics of \(\DistanceToDFT\) for every pair of model-order (\(\theta\)) and  delay-embedding dimension \((\fundelayscount) \) being tested.
    This is facilitated by box-whisker plots of this indicator with respect to model-order. 
    
    We test the major results on a variety of dynamical systems of increasing complexity.
    We primarily work with linear-time invariant (LTI) systems
    since their Koopman spectra are easily determined and the conditions required in \cref{cor:DMD_DFT_Numerical} can be met.
    We also validate our theory using the Van der Pol oscillator and the lid-driven cavity from \cite{arbabi2019spectral}. 
    Here, in contrast to the LTI systems, we have no guarantee of meeting the requisites in \cref{cor:DMD_DFT_Numerical} nor do we possess complete knowledge of the associated Koopman spectra.
    Nonetheless, we can still use the contrapositive of \cref{cor:DMD_DFT_Numerical} to get a lower bound on the complexity of the underlying Koopman mode expansion.

    \subsection{Systems satisfying the requisites for \cref{cor:DMD_DFT_Numerical}}\label{ss:Systems_Known_ISS}
    Consider the linear time-invariant dynamical system governed by the following update rule:
    \begin{equation}\label{eq:LTI1ab_LTI3}
    	\Gamma(\state) ~:=~
    	\begin{bmatrix}
    		\lambda_1 & & & \\
    		& \lambda_2 & & \\
    		& & \ddots & \\
    		& & & \lambda_r
    	\end{bmatrix} \state.
    \end{equation}
	We set \(r\) to be \(7\) and study three choices of \(\KEvals\), denoted \(\lti_{1a}\), \(\lti_{1b}\) and \(\lti_3\), that are detailed in \cref{tab:LTI_Eval_List}.
	\begin{table}[tbhp]\label{tab:LTI_Eval_List}
		\caption{LTI systems from \cref{eq:LTI1ab_LTI3} that are used to illustrate \cref{cor:DMD_DFT_Numerical}.}
		\begin{center}
			\begin{tabular}{|c|c|c|c|} \hline
				Name & \(\lti_{1a} \) & \(\lti_{1b} \)& \(\lti_{3} \)\\ \hline
				Eigenvalues&  \(|\lambda_i| = 1,~\forall\,i\) & \(|\lambda_i| = 1\neq \lambda_i,~\forall\,i\)  & \(|\lambda_i| = 1 \neq \lambda_i,~\forall\,i~\in~\{1,2,3,4\}\)\\
				&\(\lambda_1=1 \) &  & \(|\lambda_5|,~|\lambda_6|,~|\lambda_7|~\in~[0.8,~0.9]\) \\\hline
			\end{tabular}
		\end{center}
	\end{table}
    For each of these three systems, the Koopman eigenvalues are within the unit disc, but \emph{only \(\lti_{1a}\) possesses a Koopman eigenvalue at 1}.
    
    The dynamics specified in \cref{eq:LTI1ab_LTI3} implies that each individual coordinate function, \(\upsilon_i\), is a Koopman eigen-function with eigen-value \(\lambda_i\).
    Hence, we have,
    \begin{displaymath}
    	\KEFVector(\state)~=~\state.
    \end{displaymath}
	The observable used to study \cref{eq:LTI1ab_LTI3} is simply a scalar function encoded by an arbitrarily chosen row vector, \(\tilde{\mymat{C}}\), with non-zero entries:
	\begin{displaymath}
		\dictionary~:=~\tilde{\mymat{C}}\KEFVector.
	\end{displaymath}
	Hence, the dictionary \(\dictionary\) is non-redundantly spanned by \(\finitekissdim\) distinct KEFs.
	Furthermore, we ensure that our choice of initial condition, \(\state_1\), is always spectrally informative.
	
    Assuming that at-least \(6\) time delays are taken, \cref{cor:DMD_DFT_Numerical} makes the following predictions:
    \(\lti_{1a}\) has a Koopman eigenvalue at 1. 
    So, \(\DistanceToDFT\) will be insignificant when  \(\theta < 7\) and otherwise for \(\theta \geq 7 \).
    The same trend should hold for \(\lti_{1b}\) and \(\lti_3\), with the boundary shifting from 7 to 8 due to the absence of a Koopman eigenvalue at \(1\).

	For the unitary systems \(\lti_{1a}\) and \(\lti_{1b}\), the expected trend is clearly visible in sub-figures (a) and (b) of \cref{fig:cms_transit_knownss}.
    When \(\theta \) is greater than or equal to \(7 ~(8)\) for \(\lti_{1a}~(\lti_{1b})\), the values of \(\DistanceToDFT\) are many orders of magnitude larger than those seen for smaller \(\theta\).
    Moreover, the shadowing of this feature by the pertinent inter-quartile ranges (IQRs), which are denoted by boxes colored according to the number of delays taken, suggests that the observation is independent of the choice of initial condition.
    
	The icing on the cake is that when \(\theta = r = 7\), the effect of having \(1 \in \KEvals \) is in concordance with \cref{cor:DMD_DFT_Numerical}.
    The system \(\lti_{1b}\), which does not have a Koopman eigenvalue at 1, has low values of \(\DistanceToDFT\) indicating DMD-DFT equivalence.
    To the contrary, \(\lti_{1a}\) exhibits a value that is orders of magnitude larger due to the presence of a Koopman eigenvalue at 1.

	In contrast, the computations in \(\lti_{3}\) exhibit a prominent dependence on the delay embedding dimension.
    Recall that \cref{cor:DMD_DFT_Numerical} predicts the same trend for \(\lti_3\) and \(\lti_{1b}\) when \emph{at-least} \(6\) time delays are taken.
    In sub-figure (c) of \cref{fig:cms_transit_knownss}, we observe that this is indeed true when only \(6\) time delays are taken.
    However, when we increase the number of time delays to \(24\), the spike in \(\DistanceToDFT\) at \(\theta = 8\) is mollified to the extent that it invites skepticism on the theoretical predictions.

    This highlights the computational inadequacies of the forecasts given by \cref{cor:DMD_DFT_Numerical}. 
    For further clarity, we need only formalize the criterion that we implicitly used to ascertain the validity of \cref{cor:DMD_DFT_Numerical} in \(\lti_{1a}\) and \(\lti_{1b}\).
    Checking the predictions of \cref{cor:DMD_DFT_Numerical} necessitates deciding what it means for \DistanceToDFT~ to be ``zero''.
    Specifically, we must select an \(\epsilon \in (0,1]\) such that values of \DistanceToDFT~ smaller than \(\epsilon\) can be considered ``zero''.
    For the unitary systems \(\lti_{1a}\) and \(\lti_{1b}\), we can choose any number between \(10^{-14}\) and \(10^{-10}\) as \(\epsilon\), regardless of the delay embedding dimension \(\fundelayscount\).
    However, in \(\lti_{3}\), we can only pick \(\epsilon\) in a narrow band around \(10^{-10}\).
    Hence, the validity of \cref{cor:DMD_DFT_Numerical} appears ``forced'' in \(\lti_{3}\) when compared to \(\lti_{1a}\) and \(\lti_{1b}\).
	\begin{figure}[]
		\centering
		\subfloat[\(\lti_{1a}\)]{\includegraphics[width = 0.7 \linewidth]{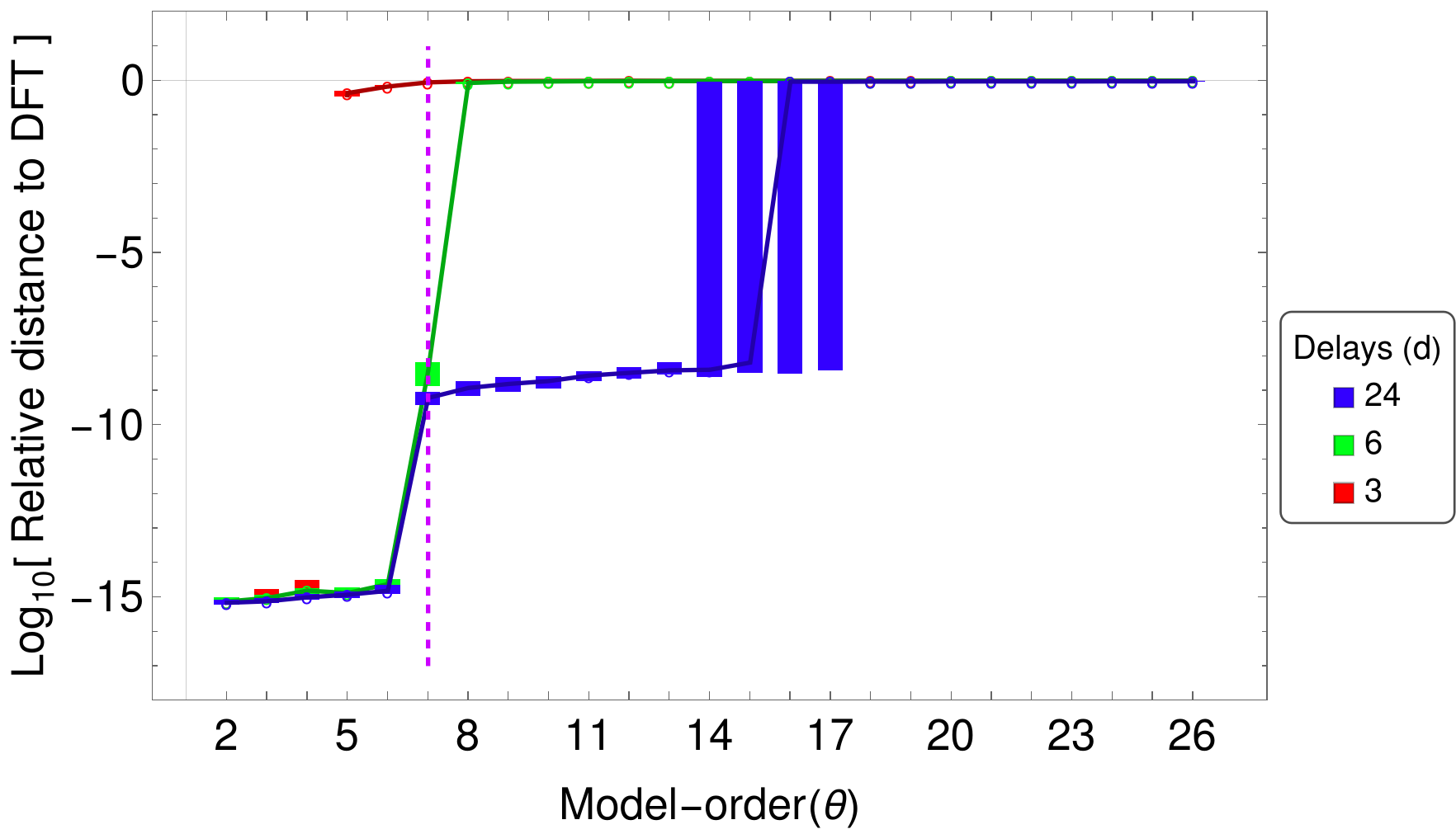}} \\
		\subfloat[\(\lti_{1b}\)]{\includegraphics[width = 0.7 \linewidth]{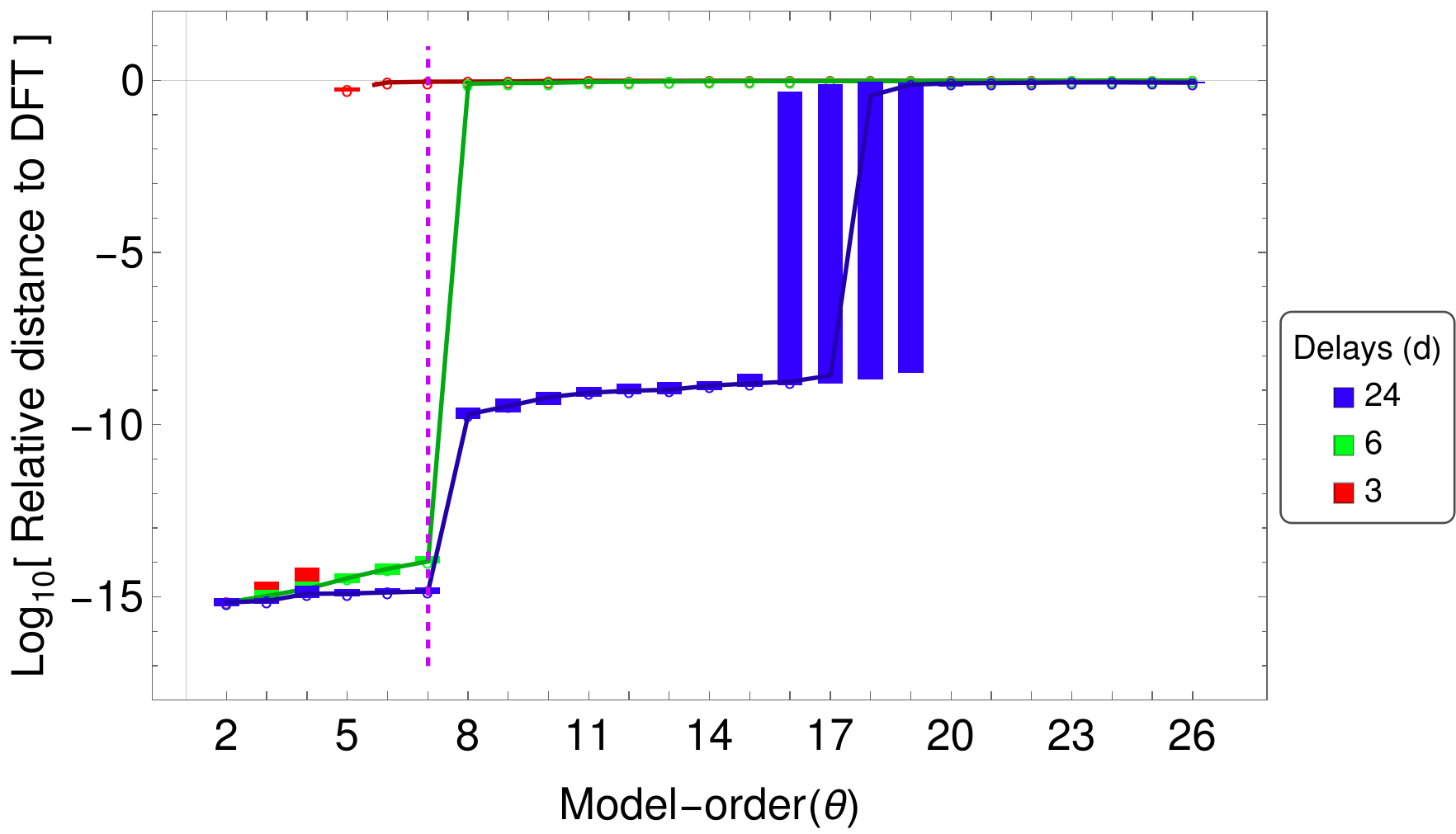}} \\
		\subfloat[\(\lti_{3}\)]{\includegraphics[width = 0.7 \linewidth]{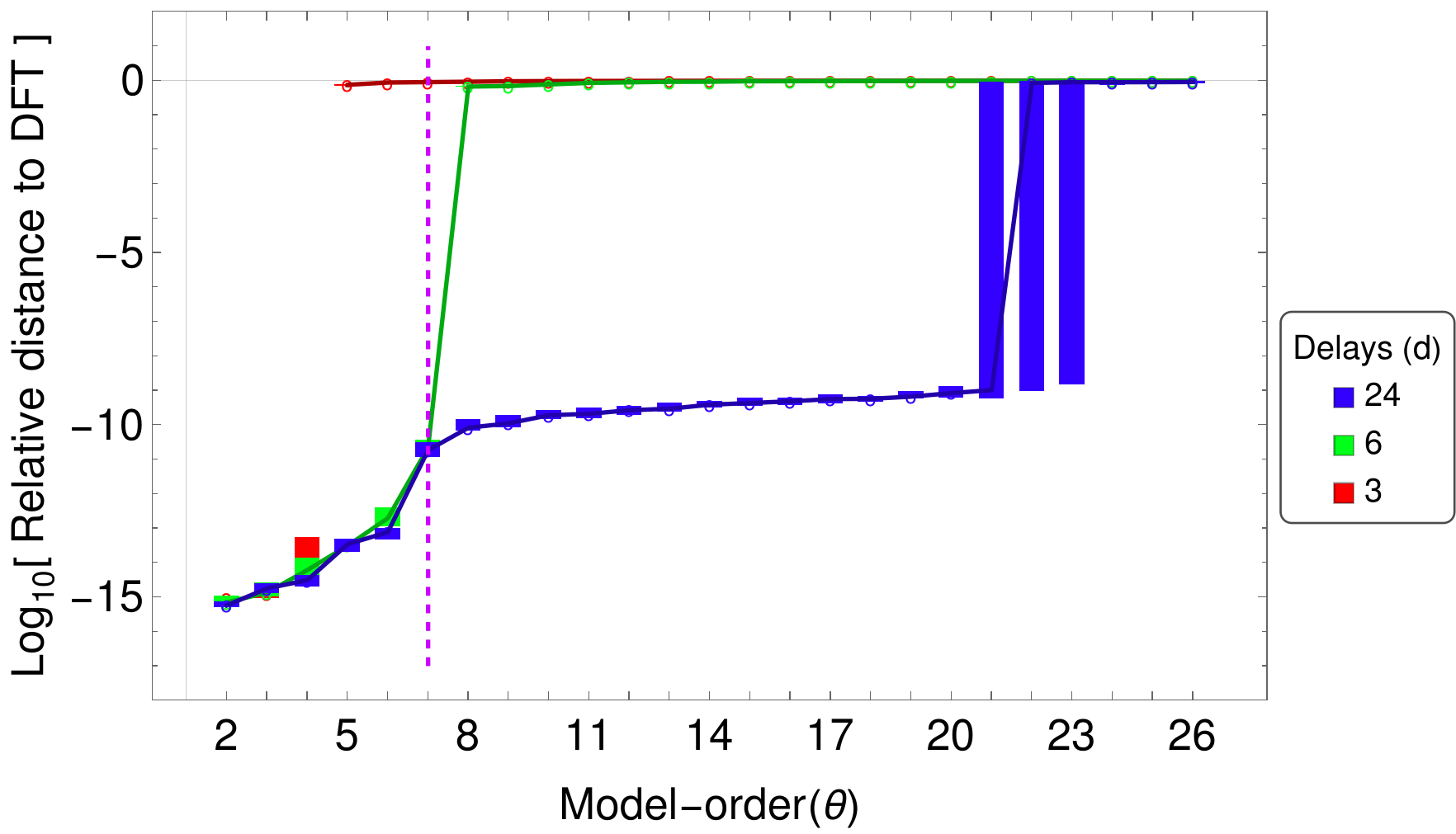}}
		
		\caption{For the linear time-invariant (LTI) systems described by \cref{eq:LTI1ab_LTI3}, box plots of  \(\DistanceToDFT\) reveal its dependence on the model order \((\theta)\) and the number of time delays (Colour-coded). 
        The magenta line indicates the system order \(\finitekissdim\), which is \(7\) for all three examples. 
        When a minimum of 6 time delays are taken and the model order is at least 7 (8) for \(\lti_{1a} (\lti_{1b})\), \DistanceToDFT~ is larger than at lower model orders, as predicted by \cref{cor:DMD_DFT_Numerical}. 
        The pertinent prediction for \(\lti_{3}\) - large values of \DistanceToDFT~for \(\theta \geq 8\) and \(\fundelayscount \geq 6\) - also appears to hold for \(d = 6\) i.e., when only the minimal required time delays are taken.
        However, increasing the delay embedding dimension to \(24\) significantly diminishes the spike in \DistanceToDFT~ at \(\theta = 8\), to the extent that it could be overlooked.
        }
		\label{fig:cms_transit_knownss}
	\end{figure}
	\begin{remark}
		\Cref{fig:cms_transit_knownss} shows secondary spikes in \(\DistanceToDFT\) for all three systems when an excess of time delays are taken.
		In particular, said spikes, which are present at \(\theta = 14, 16~\textrm{and}~21\) for \(\lti_{1a}, \lti_{1b} ~\textrm{and}~\lti_{3}\) respectively, are the result of \(\left\lVert (\mymat{X}_{\rm d-delayed})_{\rm ms}  - (\dummy{\mymat{X}}_{\rm d-delayed})_{\rm ms} \right \rVert\) becoming non-trivial.
		Hence, by \cref{rem:Only_computing_an_upper_bound},  \cref{fig:cms_transit_knownss}  depicts only an upper bound on \(\DistanceToDFT\) after the secondary spikes.
		For further discussion, see \cref{s:DisrepancyChecks}.
	\end{remark}

    \subsection{Systems that \emph{may not be} satisfying the requisites for \cref{cor:DMD_DFT_Numerical}}\label{ss:Systems_Unknown_ISS}
    When we do not know if our dictionary \(\dictionary\) lies in the non-redundant span of \(\finitekissdim\) distinct KEFs or if our initial condition \(\state_1\) is spectrally informative, we can still use the contrapositive of \cref{cor:DMD_DFT_Numerical} to get a lower bound on the number of Koopman modes constituting our dictionary.
    First, we note that if the predictions of \cref{cor:DMD_DFT_Numerical} are inconsistent with a numerical study that has taken sufficient time delays, then, we have either used a dictionary that is not non-redundantly spanned by a finite number of KEFs or our initial condition \(\state_1\) is not spectrally informative.
    In practice, the latter is not an issue because we can restrict our attention to only those KEFs whose zero level set does not contain \(\state_1\). 
    This change of perspective is permissible because the reduced collection of KEFs is equally capable of generating the training set, \(\mymat{Z}\), through time-sequential observations.
    Since the condition of \(\dictionary\) lying in the non-redundant span of \(\finitekissdim\) distinct KEFs can be interpreted as requiring \(\dictionary\) possess only \(\finitekissdim\) Koopman modes, its negation, over the specific guesses of \(\finitekissdim\) described below,  can give a lower bound on the number of Koopman modes comprising \(\dictionary\). 
    Thus, when the delay embedding dimension, \(d\), is large enough, numerical observations of \(\DistanceToDFT\) that are inconsistent with \cref{cor:DMD_DFT_Numerical} can inform the number of Koopman modes \emph{necessary} to represent \(\dictionary\).
    
    The contrapositive of \cref{cor:DMD_DFT_Numerical} can be computationally leveraged by assuming an upper-bound on the number of Koopman modes, performing the ensemble experiment described in \Cref{sss:Ensemble_Studies} and using errors, if any, in the predictions of \cref{cor:DMD_DFT_Numerical} to negate the presumed upper-bound.
    In each forthcoming numerical experiment, we begin with the assumption that our observables are non-redundantly spanned by \(\finitekissdim\) distinct KEFs.
    Although \(\finitekissdim\) is unknown, we assume that it is finite and has a known upper bound of \(\finitekissdim_{\rm max}\).
    This may be a theoretically informed cap or, if little is known about the system, simply the maximum model order that is palatable from a modeling perspective.
    Suppose we take at least \(\finitekissdim_{\max}-1\) time delays i.e., \(\fundelayscount \geq \finitekissdim_{\max}-1\).
    Then, by \cref{cor:DMD_DFT_Numerical}, \(\DistanceToDFT\) must behave like a step function with respect to the model order (\(\theta\)).
    It must be near zero for model orders less than \(\finitekissdim+1\) (\(\finitekissdim\) if 1 is a Koopman eigenvalue) and significantly higher for larger model orders.
    The absence of a step-like behavior for \(\theta\) less than or equal to \(\finitekissdim_{\max}+1\) would, then, mean that  at-least \(\finitekissdim_{\rm max}+1\) distinct KEFs are required to represent the span of our observables i.e., \(\dictionary\) has at-least \(\finitekissdim_{\max} + 1\) Koopman modes.
    In contrast, the presence of a step-like behavior, in agreement with \cref{cor:DMD_DFT_Numerical}, cannot be used to infer that \(\dictionary\) has a finite Koopman mode expansion.

	\subsubsection{Van der Pol oscillator}

    We begin with the Van der Pol oscillator, which is defined by the following\footnote{\(\upsilon_1\) and \(\upsilon_2\) are components of the state \(\state\).} differential equation:
    \begin{equation}\label{eq:ODE__Van_Der_Pol}
        \begin{aligned}
            \dot{\upsilon_1} &= \upsilon_2 \\
            \dot{\upsilon_2} &= (1-\upsilon_1^2)\upsilon_2-\upsilon_1.
        \end{aligned}
    \end{equation}
	We sample the flow map of \cref{eq:ODE__Van_Der_Pol} at equi-spaced points in time to produce the discrete time system \cref{eq:DS4SDMD}.  
    We choose our observable as an arbitrary but known linear combination of the state:
    \begin{equation}\label{eq:Dictionary__Van_Der_Pol}
        \dictionary[\state] := \tilde{\mymat{C}} \state, ~~~~~~ \tilde{\mymat{C}} \in \mathbb{C}^{1 \times 2}.
    \end{equation}

    Checking \cref{cor:DMD_DFT_Numerical} for this system generates \cref{fig:DistanceToDFT__VanDerPol}, where taking at-least \(12\) time delays seems to produce a consistent jump in \(\DistanceToDFT\), when \(\theta\) increases from 10 to 11.
    \begin{figure}
    	\centering
    	\includegraphics[width = 0.95 \linewidth]{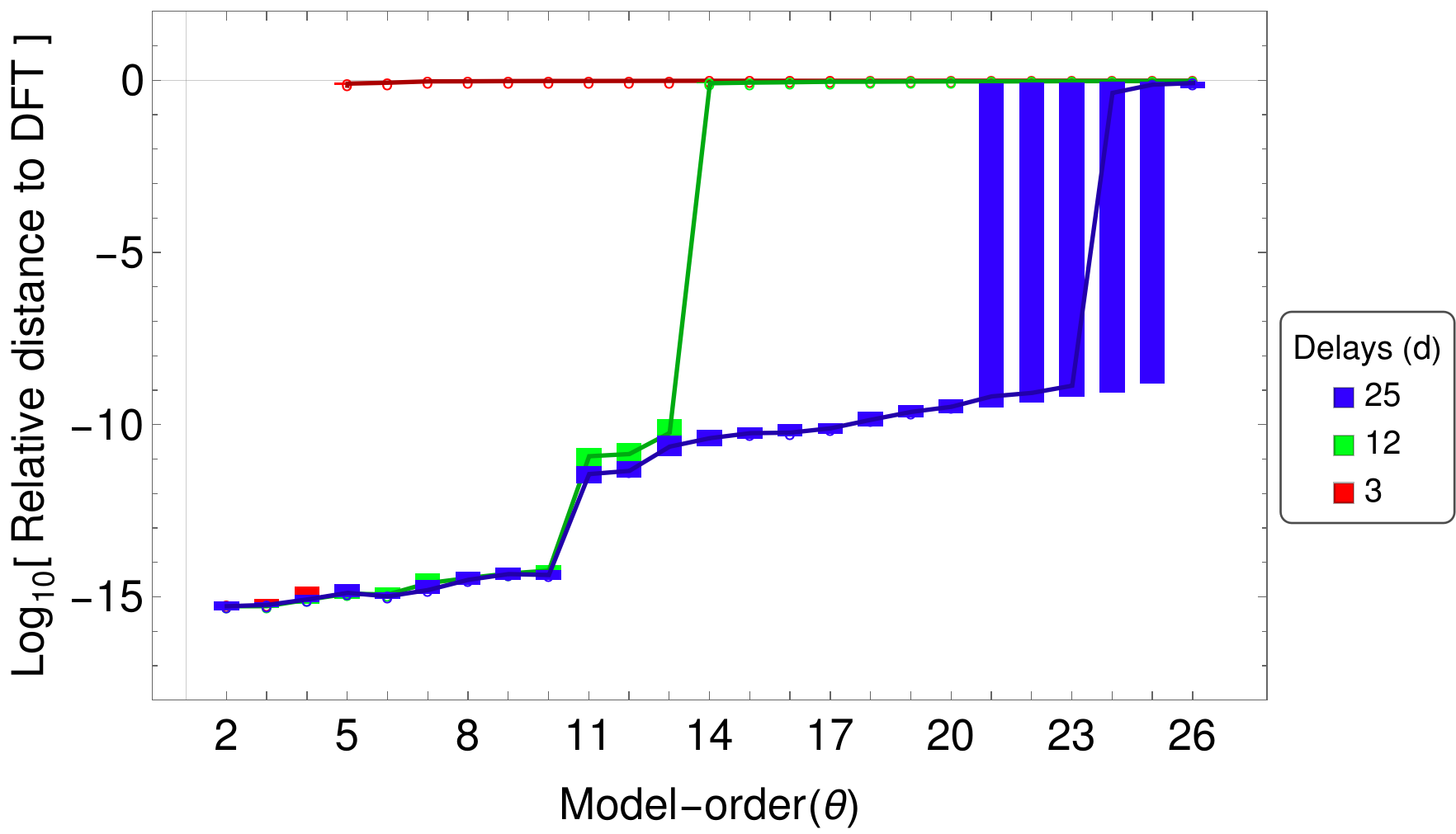}
    	\caption{The system order, \(\finitekissdim\), is unknown for the  \hyperref[eq:Dictionary__Van_Der_Pol]{dictionary that is used} to study the Van der Pol oscillator \cref{eq:ODE__Van_Der_Pol}.
   		Nonetheless, sufficient time delays lead to a step-like trend in \(\DistanceToDFT\).
   		By \cref{cor:DMD_DFT_Numerical}, the location of the jump \emph{might} be indicative of \(\dictionary\) possessing no more than 11 Koopman modes.}
    	\label{fig:DistanceToDFT__VanDerPol}
    \end{figure}
    Hence, \(\dictionary\) \emph{might not} possess more than 11 Koopman modes.
    Unfortunately, this speculation cannot be turned into a guarantee using \cref{cor:DMD_DFT_Numerical}.
    
    \subsubsection{Lid-driven cavity}

	Consider the lid-driven cavity for Reynolds numbers (\(Re\)) between \(13\times 10^3\) and \(30\times 10^3\). 
	As \(Re\) increases, the fluid flow transitions from periodic  to chaotic, passing through quasi-periodic and mixed behavior \cite{arbabi2019spectral}.
	We sample the continuous-time evolution\footnote{Numerical simulations from \cite{arbabi2019spectral}.} of the lid-driven cavity at equi-spaced points in time to produce the discrete-time dynamics \cref{eq:DS4SDMD}.
	We also choose an arbitrary linear functional of an associated stream-function as our observable.
	This sets the stage to study DMD-DFT equivalence, over a range of model orders \((\theta)\) and delay embedding dimensions \((d)\). 

	Over the range of \(Re\) considered, the qualitative change in dynamics  is reflected in the relationship between (the estimates\footnote{See \cref{s:DisrepancyChecks}} of) \(\DistanceToDFT\)  and \(\theta\) (\cref{fig:cms_transit_cavity}).
    As the Reynolds number increases, we see that the step-like trend associated with \(\fundelayscount = 25\) has its plateau beginning at larger values of \(\theta\) before disappearing altogether in the last scenario \((Re=30\times 10^3)\). 
	Hence, our observable \emph{may} have a finite number of Koopman modes in the first three cases \((Re=13\times 10^3,~ 16\times 10^3~\textrm{and}~ 20\times 10^3)\).
    
    However, for \(Re=30\times 10^3\), we can go further and certify that our observable, \(\dictionary\), possesses at least 26 Koopman modes.
    To get this guarantee, we first assume that \(\dictionary\) has utmost 25 Koopman modes or, equivalently, that it lies in the non-redundant span of utmost \(25\) distinct KEFs.
    In other words, we assume \(r_{\rm max} = 25\).
    The concomitant requirement of at least 24 time delays is met by the study corresponding to the dark blue boxes in panel (d) of \cref{fig:cms_transit_cavity}.
    Hence, by \cref{cor:DMD_DFT_Numerical}, we would expect it to spike, at the latest, by \(\theta = 26\).
    In contrast to the studies with a lower number of time delays (\(\fundelayscount~=~3~\textrm{and}~12\)), the trend shown by the dark blue boxes lacks a jump.
    Hence,  \(\dictionary\) cannot have fewer than 26 Koopman modes.

    Therefore, \cref{cor:DMD_DFT_Numerical} can provide a data-driven lower bound on the number of Koopman modes of \(\dictionary\).
    This is accomplished by taking a minimum number of time delays and looking for \emph{discrepancies} with the predictions made in \cref{cor:DMD_DFT_Numerical}.
    Alas, one cannot use the confirmation of the same predictions to certify that \(\dictionary\) possesses a finite Koopman mode expansion.

	\begin{figure}[]
		\centering
		\subfloat[\(Re ~=~ 13\times 10^3 \) (Periodic)]{\includegraphics[width = 0.45 \linewidth]{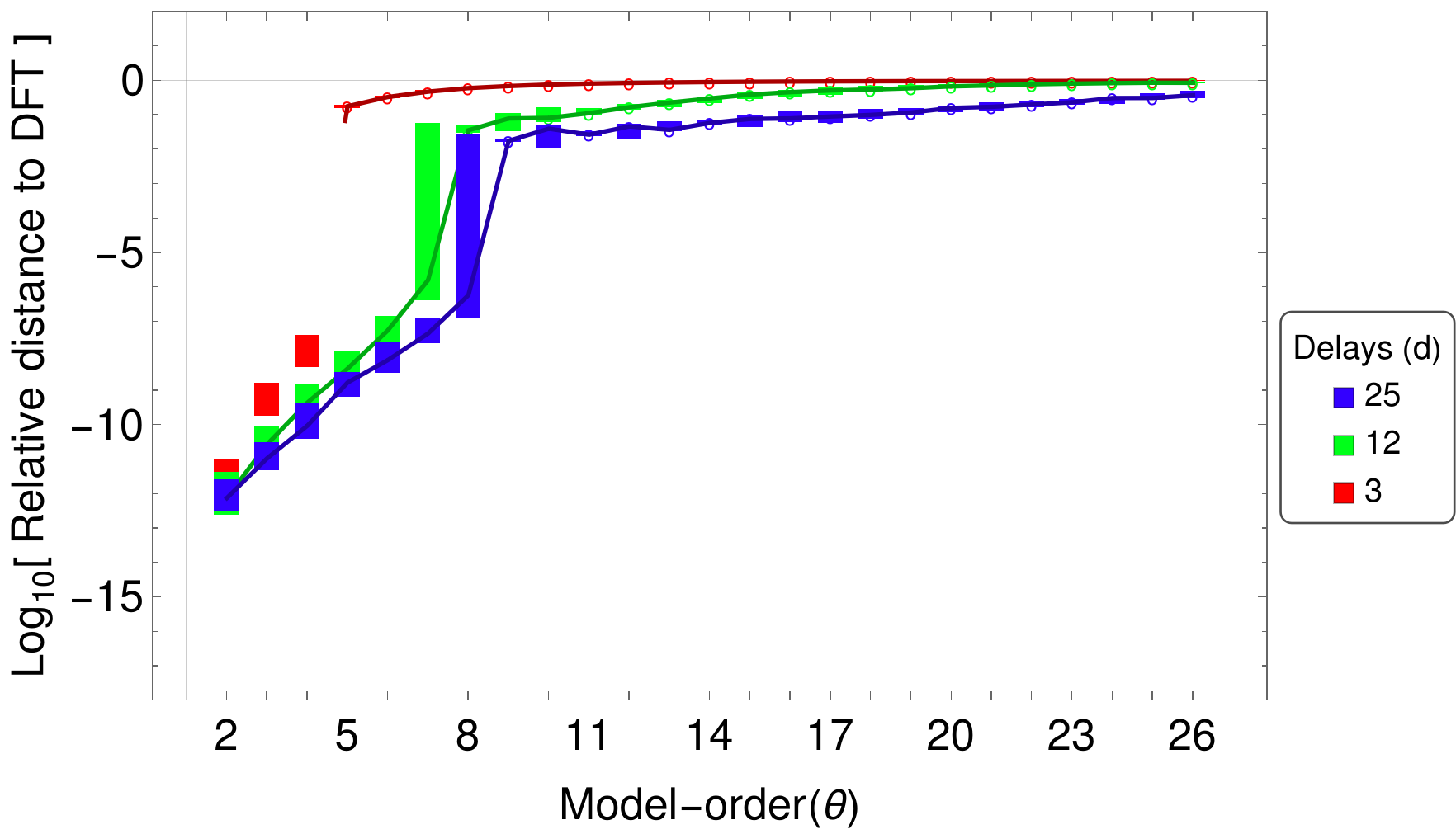}} \quad
		\subfloat[\(Re ~=~ 16\times 10^3\) (Quasi-periodic)]{\includegraphics[width = 0.45 \linewidth]{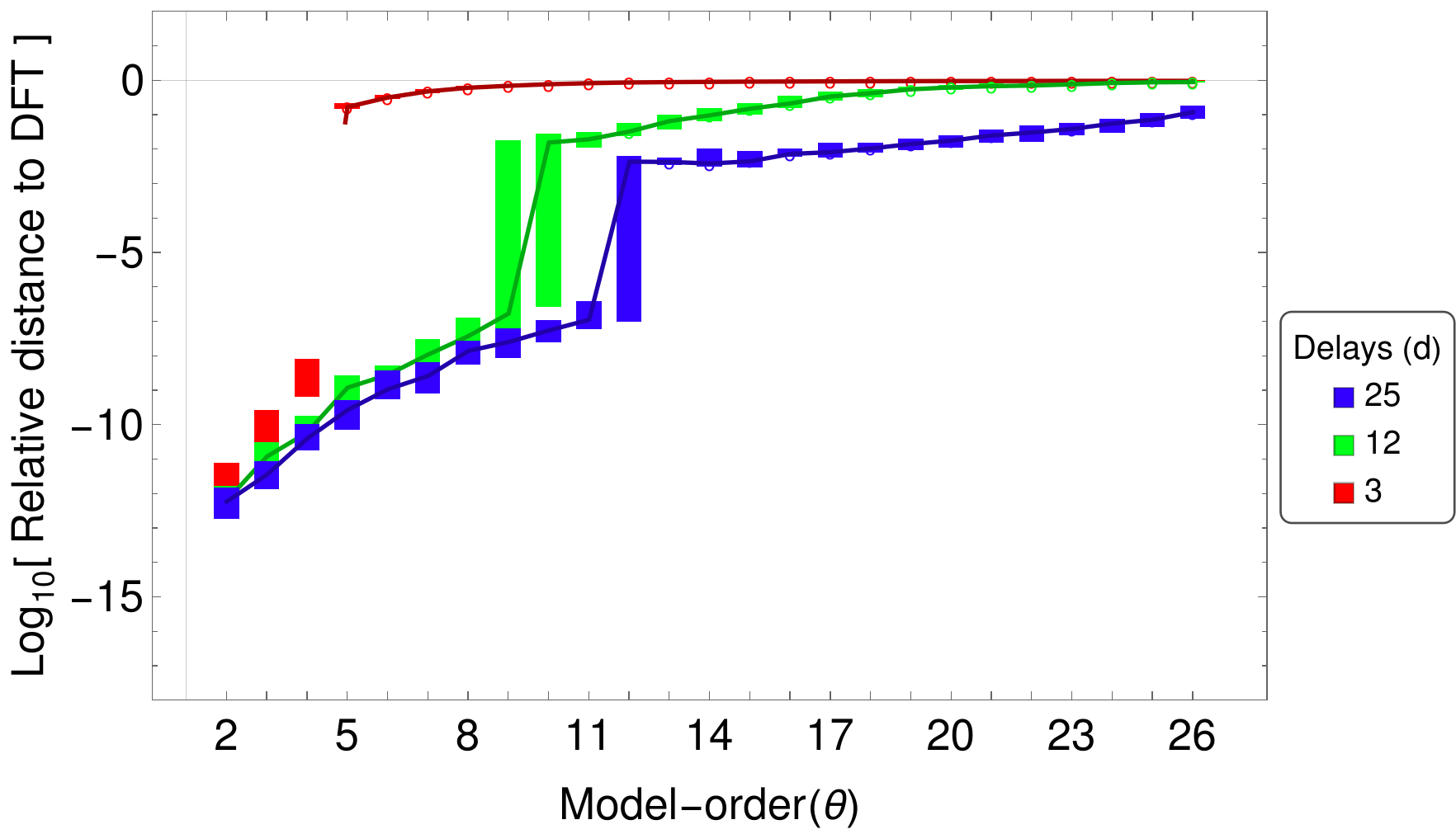}}  \\
		\subfloat[\(Re ~=~ 20\times 10^3\) (Mixed)]{\includegraphics[width = 0.45 \linewidth]{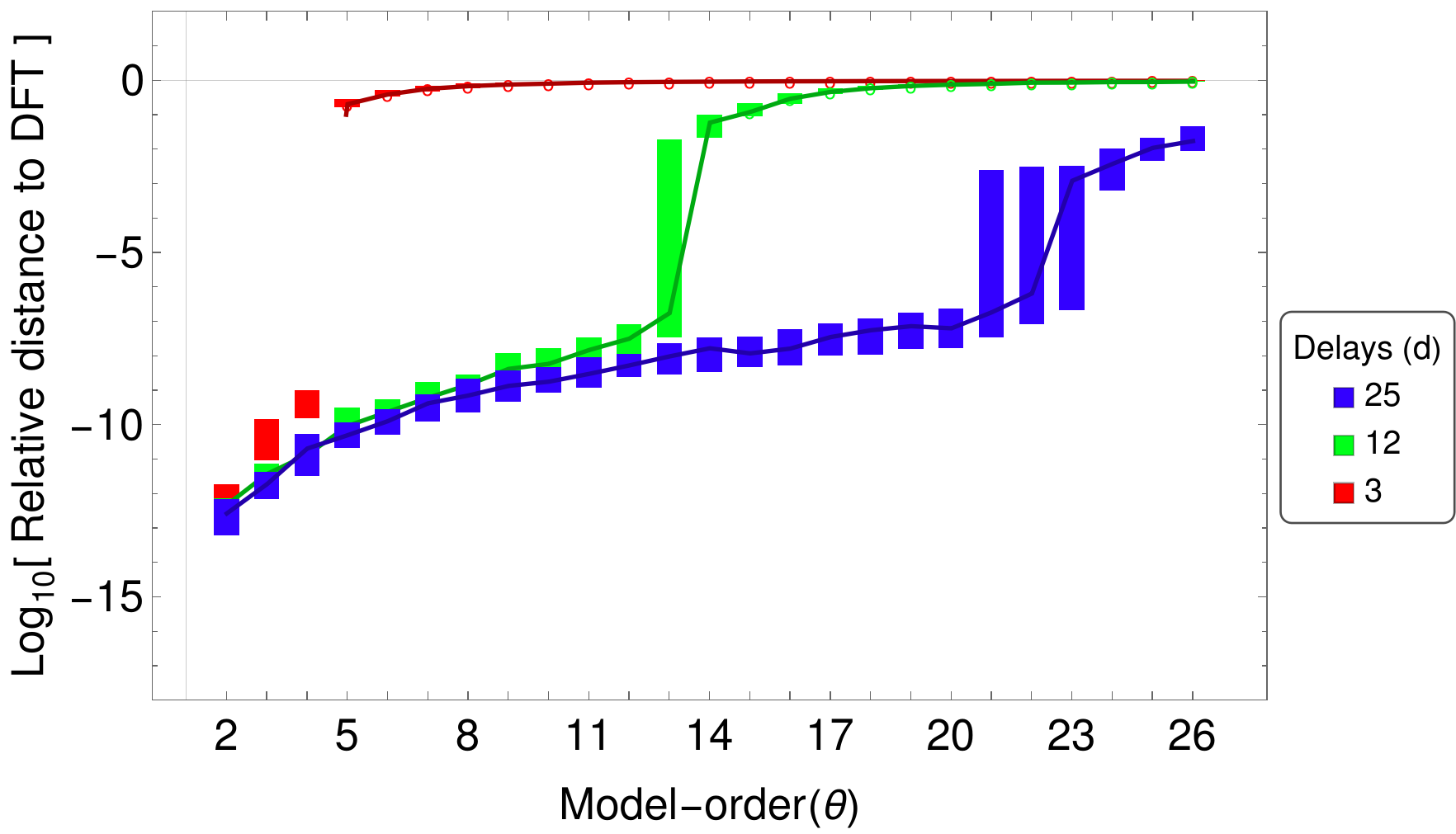}} \quad
		\subfloat[\(Re~=~ 30\times 10^3\) (Chaotic)]{\includegraphics[width = 0.45 \linewidth]{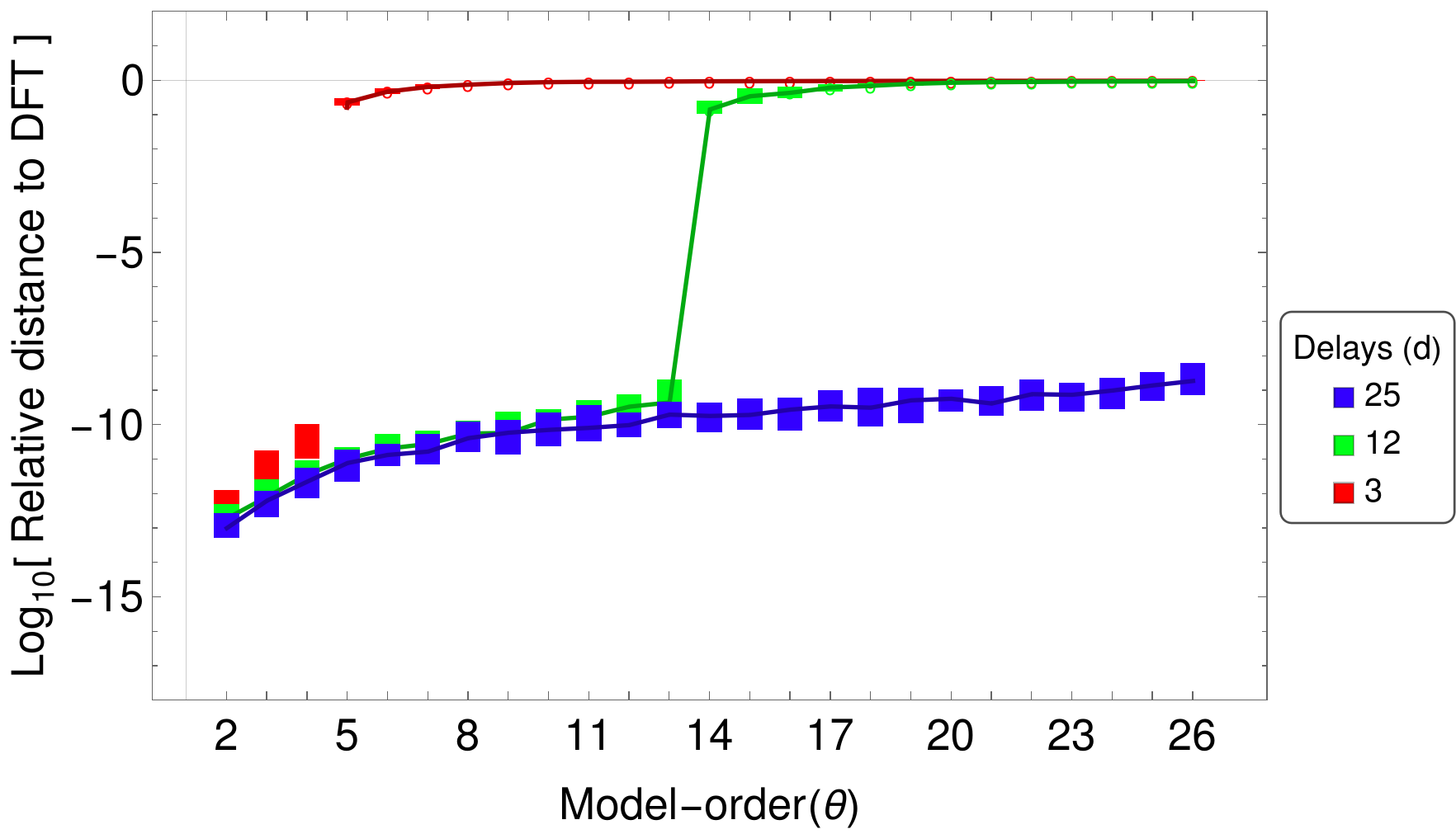}} 
		\caption{ 			
		As \(Re\) increases, the lid-driven cavity flow grows in complexity.
		This correlation is reflected in the above box plots of (\hyperref[s:DisrepancyChecks]{estimated}) \(\DistanceToDFT\).		
        Panels (a)-(c) display jumps that occur at larger values of \(\theta\).
		In contrast, the final plot (d) is conspicuous in its lack of a discontinuity. 
        By \cref{cor:DMD_DFT_Numerical}, we can infer that \(\dictionary\) possesses at-least 26 Koopman modes.
        This observation agrees with the fact that the Koopman operator does not possess any eigenfunctions when the underlying dynamics is chaotic.
        }
		\label{fig:cms_transit_cavity}
	\end{figure}
    
	\section{Conclusions and Future Work}
	\label{s:Conclusions}
	The relation between mean-subtracted DMD and temporal DFT has been clarified, for observables that possess only a finite number of Koopman modes.
	When a collection of such observables spans a subspace invariant under the Koopman operator and is chosen as the dictionary in DMD, non-equivalence of mean-subtracted DMD and DFT is tantamount to sufficiency of training data.
	The requisite invariance can be attained by taking as many time delays as the number of distinct Koopman modes.
	Therefore, DMD-DFT equivalence vanishes when data is plentiful, and delay embedded.
	The contrapositive suggests that DMD-DFT equivalence can be used as an indicator of inadequate training data- a property that contrasts with its' original perception as a potential rot.
	
	For future work, we note that increasing the number of time delays, far beyond the prescription of \cref{prop:DelaysMakeGoodObservables}, can acutely blur the numerical distinction between DMD-DFT equivalence and non-equivalence.
	The theory developed in this work is oblivious to the choice of delay embedding dimension, beyond requiring a minimum value.
	Understanding this distortion produced by large delay embeddings will improve the reliability of DMD-DFT equivalence in inferring the order of the underlying dynamics.
	
	\section*{Acknowledgments}
	
	This work was supported by the Army Research Office (ARO-MURI W911NF-17-1-030) and the National Science Foundation
	(Grant no. 1935327).

	\bibliographystyle{siamplain}
	\bibliography{refs_tiny}
	
	\appendix

	\section{Proof of \cref{thm:muDMD_preserves_constant_KEFs}}\label{s:Proof__muDMD_preserves_KEFs_at_1}
	
	For any choice of \(\state_{\rm test}\), we can establish \cref{eq:msub_forecasts__preserve__BCs} by induction on \(j\). 
	
	\mypara{Step 1}
	When \(j=1\), the identity \cref{eq:msub_forecasts__preserve__BCs} trivially holds. 
	
	\mypara{Step 2}
	Suppose \cref{eq:msub_forecasts__preserve__BCs} holds for \(j \leq o\).
	
	\mypara{Step 3}
	We need to show that \cref{eq:msub_forecasts__preserve__BCs} also holds for \(j = o+1\) i.e.,
	\begin{displaymath}
		\mymat{E}^H (\reduced{\myvec{z}}_{\rm test})_{o+1} ~=~ \mymat{E}^H (\reduced{\myvec{z}}_{\rm test})_1.
	\end{displaymath}
	To this end, we will split the analysis into two cases: 
	
	\mysubpara{Step 3.1: \(o + 1 \leq \undelayedDMDorder\)}
	
	Sequential application of \cref{eq:forecast_muDMD}, \cref{eq:Abstracted_BCs} and \cref{eq:forecast_muDMD}  yields the desired conclusion.
	\begin{displaymath}
		\mymat{E}^H \EmphasiseReduction{(\reduced{\myvec{z}}_{\rm test})_{o+1}}
		~=~ \EmphasiseReduction{
			\mymat{E}^H (\myvec{z}_{\rm test})_{o+1}
		} 
		~=~ \mymat{E}^H 
		\EmphasiseReduction{
			(\myvec{z}_{\rm test})_1 
		}
		~=~ \mymat{E}^H (\reduced{\myvec{z}}_{\rm test})_1.
	\end{displaymath} 
	
	\mysubpara{Step 3.2: \(o+1 > \undelayedDMDorder\)}
	
	Unpacking \((\reduced{\myvec{z}}_{\rm test})_{o+1}\) with \cref{eq:forecast_muDMD}, we get:
	\begin{displaymath}
		\begin{aligned}
			\mymat{E}^H 
			\EmphasiseReduction{
				(\reduced{\myvec{z}}_{\rm test})_{o+1}
			} ~&=~ 
			\mymat{E}^H 
			\left(  \myvec{\mu}_{\rm test} ~+~ 
			\begin{bmatrix}
				(\reduced{\myvec{z}}_{\rm test})_{o+1 - \undelayedDMDorder + k} - \myvec{\mu}_{\rm test}
			\end{bmatrix}_{k=0}^{\undelayedDMDorder-1}~
			~
			\opt{\myvec{c}}[\mymat{Z}_{\rm ms}] 
			\right) \\
			~&=~ 
			\EmphasiseReduction{
				\mymat{E}^H \myvec{\mu}_{\rm test}
			} ~+~
			\begin{bmatrix}
				\EmphasiseReduction{
					\mymat{E}^H(\reduced{\myvec{z}}_{\rm test})_{o+1 - \undelayedDMDorder + k}
				} 
				- 
				\EmphasiseReduction{
					\mymat{E}^H\myvec{\mu}_{\rm test}
				}
			\end{bmatrix}_{k=0}^{\undelayedDMDorder-1}~
			~
			\opt{\myvec{c}}[\mymat{Z}_{\rm ms}]. 
		\end{aligned}
	\end{displaymath}
	
	The expression \(\mymat{E}^H \myvec{\mu}_{\rm test} \) can be simplified by applying \cref{eq:Defn__Test_Temporal_Mean} followed by \cref{eq:Abstracted_BCs}:
	\begin{displaymath}
		\mymat{E}^H 
		\EmphasiseReduction{
			\myvec{\mu}_{\rm test} 
		}
		~=~ \frac{1}{\undelayedDMDorder+1} \sum_{j=1}^{\undelayedDMDorder+1} 
		\EmphasiseReduction{
			\mymat{E}^H (\myvec{z}_{\rm test})_j 
		}
		~=~ \frac{1}{\undelayedDMDorder+1} \sum_{j=1}^{\undelayedDMDorder+1} \mymat{E}^H
		(\myvec{z}_{\rm test})_1
		~=~  \mymat{E}^H
		(\myvec{z}_{\rm test})_1.
	\end{displaymath} 
	Similarly, the term \(\mymat{E}^H(\reduced{\myvec{z}}_{\rm test})_{o+1 - \undelayedDMDorder + k}\) can be reduced by noting that \(k \in \{0, \dots, \undelayedDMDorder-1\}\).
	Since this means \(o + 1 - \undelayedDMDorder + k \leq o\),  we can apply Step 2 of the induction followed by \cref{eq:forecast_muDMD} to get:
	\begin{displaymath}
		\EmphasiseReduction{
			\mymat{E}^H (\reduced{\myvec{z}}_{\rm test})_{o + 1 - \undelayedDMDorder + k}
		}
		~=~ \mymat{E}^H 
		\EmphasiseReduction{
			(\reduced{\myvec{z}}_{\rm test})_{1}
		}
		~=~ \mymat{E}^H (\myvec{z}_{\rm test})_1.
	\end{displaymath}
	Combining the above reductions and, then, using \cref{eq:forecast_muDMD} yields the desired identity:
	\begin{displaymath}
		\begin{aligned}
			&\EmphasiseReduction{
				\mymat{E}^H \myvec{\mu}_{\rm test}
			} ~+~
			\begin{bmatrix}
				\EmphasiseReduction{
					\mymat{E}^H(\reduced{\myvec{z}}_{\rm test})_{o+1 - \undelayedDMDorder + k}
				} 
				- 
				\EmphasiseReduction{
					\mymat{E}^H\myvec{\mu}_{\rm test}
				}
			\end{bmatrix}_{k=0}^{\undelayedDMDorder-1}~
			~
			\opt{\myvec{c}}[\mymat{Z}_{\rm ms}] \\
			~=~	
			&\mymat{E}^H
			\EmphasiseReduction{
				(\myvec{z}_{\rm test})_1
			}	
			~+~
			\begin{bmatrix}
				\EmphasiseReduction{
					\mymat{E}^H (\myvec{z}_{\rm test})_1 
					- 
					\mymat{E}^H	(\myvec{z}_{\rm test})_1
				}
			\end{bmatrix}_{k=0}^{\undelayedDMDorder-1}~
			~
			\opt{\myvec{c}}[\mymat{Z}_{\rm ms}] \\	
			~=~	&\mymat{E}^H
			(\reduced{\myvec{z}}_{\rm test})_1.
		\end{aligned}
	\end{displaymath}		
	\manualQED

	\section{Unpacking the structure induced by recurrent assumptions on the dictionary and the training trajectory}
	
	All the major results in this work make the following two assumptions:
	\begin{enumerate}
		\item The dictionary \PsiLiesInNRSpanOfrDistinctKEFs, \(\KEFs\), with distinct eigenvalues, \(\KEvals\).
		\item The initial condition \(\state_1\) is \SpectrallyInformative.
	\end{enumerate}
	In this setting, the \hyperref[defn:Koopman_Mode_Decomposition]{Koopman mode expansion}  of \(\dictionary\) manifests as a specific factorization of the time series \(\mymat{Z}\) (\Cref{ss:KMD_Compact}).
	Furthermore, one of these factors, the matrix of Koopman modes, provides an alternative characterization of  \hyperref[defn:Koopman_invariant_psi]{Koopman invariant} dictionaries (\Cref{ss:RephraseKoopmanInvariance}).
	
	\subsection{The Koopman mode factorization of \(\mymat{Z}\)}\label{ss:KMD_Compact}
	\begin{lemma}\label{lem:KMF_of_Z}
		Suppose \PsiLiesInNRSpanOfrDistinctKEFs, \(\KEFs\), with distinct eigenvalues, \(\KEvals\).
		
		Then, the row space of \(\mymat{Z}\) is contained in the row space of the Vandermonde matrix, 
		\begin{equation}\label{eq:defn_Theta}
			\mymat{\Theta} := 
			\begin{bmatrix}
				1 &\lambda_1 & \lambda_1^2 & \hdots & \lambda_1^n \\
				1 &\lambda_2 & \lambda_2^2 & \hdots & \lambda_2^n \\
				\vdots & \vdots & \vdots & \ddots & \vdots \\
				1 &\lambda_r & \lambda_r^2 & \hdots & \lambda_r^n \\
			\end{bmatrix}.
		\end{equation}
		Specifically, if we define \(\mymat{C}\) to be the column scaling of \hyperref[defn:NonRedundantSpanOfDistinctKEFs]{\(\tilde{\mymat{C}}\)} by the eigen-coordinates of the initial condition \(\state_1\), 	
		\begin{equation}\label{eq:C_definition}
			\diag[\KEFVector(\state_1)] ~:=~ 
			\begin{bmatrix}
				\phi_1(\state_1) & & & \\
				& \phi_2(\state_1) & & \\
				& & \ddots & \\
				& & & \phi_r(\state_1)
			\end{bmatrix}, \quad
			\mymat{C} ~:=~\tilde{\mymat{C}}~ \diag[\KEFVector(\state_1)],
		\end{equation}
		we get a ``Koopman mode'' factorization of \(\mymat{Z}\):
		\begin{equation}\label{eq:KMF__of__Z}
			\mymat{Z}
			= \mymat{C}  \mymat{\Theta}. 
		\end{equation} 
		
		Furthermore, a \SpectrallyInformative~ \(\state_1\) is necessary and sufficient to ensure there are no redundancies in said decomposition:
		\begin{equation}\label{eq:C_has_no_zero_cols__IFF__IC_is_spectrally_informative}
			\state_1~\textrm{is spectrally informative}~~\iff~~\forall~i,\quad \mymat{C}\myvec{e}_i \neq \myvec{0}.
		\end{equation}
	\end{lemma}
	
	\mypara{Proof}
	
	\mysubpara{Deducing the Koopman mode factorization of \(\mymat{Z}\)}
	Recall the construction of \(\mymat{Z}\) from \cref{eq:Zmat4CompDMD}:
	\begin{displaymath}
		\mymat{Z} 
		~=~ \left[ 
		\myvec{z}_{j} \right]_{j=0}^{\undelayedDMDorder}
		~=~ \left[ \dictionary\left( \Gamma^{j}\left(\state_1\right) \right) \right]_{j=0}^{\undelayedDMDorder}.
	\end{displaymath}
	Here, for the sake of convenience, we have numbered the columns of \(\mymat{Z}\) from \(0\) to \(\undelayedDMDorder\), instead of the original choice of \(1\) to \(\undelayedDMDorder+1\).
	We can begin by using \cref{eq:defn__Ctilde} to write \(\dictionary\) in terms of the eigenfunctions \(\KEFVector\) and, then, leverage \cref{eq:Koopman__on__VectorObservables} to replace the potentially nonlinear map \(\Gamma^{j} \) with the Koopman operator.
	\begin{displaymath}
		\EmphasiseReduction{\dictionary}(\Gamma^j(\state_1)) 
		~=~ \tilde{\mymat{C}}\left( \EmphasiseReduction{\KEFVector\circ \Gamma^j}\right)(\state_1)
		~=~ \tilde{\mymat{C}}\left(U^j \circ \KEFVector\right) (\state_1).
	\end{displaymath}
	Now, if we unpack each component of \(\left(U^j \circ \KEFVector\right) (\state_1)\) and invoke the defining property of a Koopman eigen-function (\ref{defn:Koopman_Eigenfunctions}),
	\begin{displaymath}
		\left(U^j \circ \KEFVector\right) (\state_1)
		~=~ \left[ \left( \EmphasiseReduction{U^j \circ \phi_i}\right) (\state_1) \right]_{i=1}^\finitekissdim
		~=~ \left[ \lambda_i^j \phi_i (\state_1) \right]_{i=1}^\finitekissdim,
	\end{displaymath}
	then, we can use \cref{eq:C_definition} to replace \(\tilde{\mymat{C}}\) with \(\mymat{C}\).
	\begin{displaymath}
		\tilde{\mymat{C}} \left[ \lambda_i^j \EmphasiseReduction{\phi_i (\state_1)} \right]_{i=1}^\finitekissdim
		~=~		\EmphasiseReduction{\tilde{\mymat{C}}~ \diag[\KEFVector(\state_1)]}  \left[\lambda_i^j \right]_{i=1}^\finitekissdim
		~=~\mymat{C} \left[\lambda_i^j \right]_{i=1}^\finitekissdim.
	\end{displaymath}
	Compiling this expression for each column of \( \mymat{Z} \) gives \cref{eq:KMF__of__Z}.
	
	\mysubpara{Connecting spectral informativeness to the absence of redundancies}
	According to \cref{eq:Every_column_in_CTilde_is_NonZero}, every column of \(\tilde{\mymat{C}}\) is non-zero.
	
	When \(\state_1\) is spectrally informative, each of the \(\finitekissdim\) KEFs, \(\KEFs\), evaluates to a non-zero value at \(\state_1\).
	Since these same non-zero values scale the columns of \(\tilde{\mymat{C}}\), in \cref{eq:C_definition}, to produce \(\mymat{C}\), every column of \(\mymat{C}\) is also non-zero.
	
	On the other hand, if \(\state_1\) is not spectrally informative, then, there is a KEF that evaluates to zero at \(\state_1\).
	Hence, we can apply \cref{eq:C_definition} to deduce that one of the columns in \(\mymat{C}\) is zero.
	
	\manualQED

	\begin{remark}
		If we define \(\mymat{\Theta}_j\) as \(\mymat{\Theta}\) without the last \(j\) columns,
		\begin{equation}\label{eq:Vandermonde_and_submatrices}
			\mymat{\Theta}_j ~:=~          \begin{bmatrix}
				1 &\lambda_1 & \lambda_1^2 & \hdots & \lambda_1^{n-j} \\
				1 &\lambda_2 & \lambda_2^2 & \hdots & \lambda_2^{n-j} \\
				\vdots & \vdots & \vdots & \ddots & \vdots \\
				1 &\lambda_r & \lambda_r^2 & \hdots & \lambda_r^{n-j} \\
			\end{bmatrix},
		\end{equation}	
		and denote by \(\mymat{\Lambda}\) an appropriate diagonal matrix of Koopman eigenvalues,
		\begin{equation}\label{eq:KEvalsDiagonally}
			\mymat{\Lambda}~:=~
			\begin{bmatrix}
				\lambda_1 & & & \\
				& \lambda_2 & & \\
				& & \ddots & \\
				& & & \lambda_r
			\end{bmatrix},
		\end{equation}
		then, an analogue of \cref{eq:KMF__of__Z} exists for the sub-matrices \(\mymat{X}\) and \(\mymat{Y}\) too:
		\begin{equation}\label{eq:KMF__of__X_Y}
			\begin{aligned}
				\mymat{X} &= [~\myvec{z}_j~]_{j=0}^{\undelayedDMDorder-1} =& \mymat{C} \mymat{\Theta}_1.\\
				\mymat{Y} &= [~\myvec{z}_j~]_{j=1}^{\undelayedDMDorder} =& \mymat{C}  \mymat{\Lambda} \mymat{\Theta}_1.
			\end{aligned}
		\end{equation} 	
	\end{remark}

	\subsection{An analytically useful re-phrasal of Koopman invariance}\label{ss:RephraseKoopmanInvariance}
	
	\begin{proposition}\label{prop:ManyFacesofLinCon}
		Suppose \PsiLiesInNRSpanOfrDistinctKEFs.
		Then, \hyperref[defn:Koopman_invariant_psi]{Koopman invariance of \(\dictionary\)} is equivalent to the columns of \(\tilde{\mymat{C}}\) being linearly independent.
		\begin{equation}\label{eq:U_Invariance__via__Ctilde}
			\dictionary~\textrm{is Koopman invariant}~\iff~\tilde{\mymat{C}}~\textrm{has full column rank.}
		\end{equation}
		
		Under the additional condition of a \SpectrallyInformative\, \(\state_1\), Koopman invariance can be inferred, alternatively, from the matrix \(\mymat{C}\):
		\begin{equation}\label{eq:U_Invariance__via__C}
			\dictionary~\textrm{is Koopman invariant}~\iff~\mymat{C}~\textrm{has full column rank.}
		\end{equation}
	\end{proposition}
	
	\mypara{Proof}
	We will focus on establishing \cref{eq:U_Invariance__via__Ctilde} because it is readily transmuted by a spectrally informative \(\state_1\) into \cref{eq:U_Invariance__via__C}  .
	
	\mysubpara{Koopman invariance via \( \tilde{\mymat{C}} \)}
	
	The backward implication can be proven using the \(m \times m\) matrix,
	\begin{displaymath}
		\mymat{A}~:=~\tilde{\mymat{C}} \mymat{\Lambda} \tilde{\mymat{C}}^\dagger,
	\end{displaymath}
	as the guess for the action of the Koopman operator on \(\dictionary\). 
	In particular, we will show that \(U \circ \dictionary~=~ \mymat{A} \dictionary\).
	To begin with, we can expand \(\dictionary\) on the right-hand side using \cref{eq:defn__Ctilde} and, then, unpack \(\mymat{A}\):
	\begin{displaymath}
		\mymat{A} \EmphasiseReduction{\dictionary}
		~=~ \EmphasiseReduction{\mymat{A}} \tilde{\mymat{C}} \KEFVector 
		~=~ (\tilde{\mymat{C}} \mymat{\Lambda} \tilde{\mymat{C}}^\dagger ) \tilde{\mymat{C}} \KEFVector.
	\end{displaymath}
	Since \(\tilde{\mymat{C}}\) has full column rank, \(\tilde{\mymat{C}}^\dagger  \tilde{\mymat{C}} = \mymat{I}\). 
	Using this identity lets us apply \cref{eq:KEvalsDiagonally,eq:Koopman__on__VectorObservables} simultaneously to bring in the Koopman operator \(U\):              
	\begin{displaymath}
		\tilde{\mymat{C}} \mymat{\Lambda}
		\EmphasiseReduction{\tilde{\mymat{C}}^\dagger  \tilde{\mymat{C}}} 
		\KEFVector
		~=~ \tilde{\mymat{C}} 
		\EmphasiseReduction{\mymat{\Lambda} \KEFVector}
		~=~ \tilde{\mymat{C}}~ \left(U \circ \KEFVector\right).
	\end{displaymath}
	Finally, we can use the linearity of \(U\) to link \(\tilde{\mymat{C}}\) and \(\KEFVector\), thereby permitting another application of \cref{eq:defn__Ctilde}:
	\begin{displaymath}
		\EmphasiseReduction{\tilde{\mymat{C}}~ U} \circ \KEFVector
		~=~ U \circ \EmphasiseReduction{\tilde{\mymat{C}}  \KEFVector}
		~=~ U \circ \dictionary.
	\end{displaymath}
	
	The forward implication can be tackled by starting with the definition of a Koopman invariant dictionary and reversing the arguments used above.
	Recall that a Koopman invariant \(\dictionary\) implies, by \cref{defn:Koopman_invariant_psi}, the existence of a matrix \(\mymat{A}\) such that
	\begin{displaymath}
		U \circ \dictionary~=~ \mymat{A} \dictionary.
	\end{displaymath}
	Now, for the left-hand side, we can simply reverse the steps taken towards the conclusion of the forward implication- Invoke \cref{eq:defn__Ctilde}, follow it up with the linearity of \(U\) and conclude by applying \cref{eq:KEvalsDiagonally,eq:Koopman__on__VectorObservables} simultaneously: 
	\begin{displaymath}
		U \circ \EmphasiseReduction{\dictionary} ~=~
		\EmphasiseReduction{	U \circ \tilde{\mymat{C}}} \KEFVector
		~=~ \tilde{\mymat{C}} \EmphasiseReduction{	U \circ \KEFVector}
		~=~ \tilde{\mymat{C}} \mymat{\Lambda} \KEFVector.
	\end{displaymath}
	For the right-hand side, simply apply \cref{eq:defn__Ctilde}.
	\begin{displaymath}
		\mymat{A} \EmphasiseReduction{\dictionary} ~=~ \mymat{A} \tilde{\mymat{C}} \KEFVector.
	\end{displaymath}
	Combining the two expansions, we get:
	\begin{displaymath}
		\tilde{\mymat{C}} \mymat{\Lambda} \KEFVector ~=~ \mymat{A} \tilde{\mymat{C}} \KEFVector.
	\end{displaymath}
	Since this relationship holds for all values of \(\KEFVector\), we have:
	\begin{displaymath}
		\tilde{\mymat{C}} \mymat{\Lambda} ~=~ \mymat{A} \tilde{\mymat{C}}.
	\end{displaymath}
	Observe that, by \cref{eq:KEvalsDiagonally}, the columns of \(\tilde{\mymat{C}}\) are the eigen vectors of \(\mymat{A}\) corresponding to distinct eigen-values.
	Hence, they must be linearly independent.

	\mysubpara{Koopman invariance via \( \mymat{C} \)}
	When the initial condition \(\state_1\) is spectrally informative, then, \(\mymat{C}\) has full column rank if and only if \(\tilde{\mymat{C}}\) also possesses full column rank.
	According to \cref{defn:SpectrallyInformativeState}, \(\state_1\) is spectrally informative when each of the \(\finitekissdim\) Koopman eigenfunctions, \(\KEFs\), evaluates to a non-zero value at \(\state_1\).
	Now, \cref{eq:C_definition} tells us that these same values are diagonally stacked to form \(\diag[\KEFVector(\state_1)]\).
	In other words,
	\begin{displaymath}
		\state_1~\textrm{is spectrally informative}~\iff~\diag[\KEFVector(\state_1)]~\textrm{is invertible.}
	\end{displaymath}
	Furthermore, this matrix is used to construct \(\mymat{C}\) from \(\tilde{\mymat{C}}\):
	\begin{displaymath}
		\mymat{C} ~=~\tilde{\mymat{C}}~ \diag[\KEFVector(\state_1)].
	\end{displaymath}
	Therefore, when \(\state_1\) is spectrally informative, we have,
	\begin{displaymath}
		\tilde{\mymat{C}}~\textrm{has full column rank}~\iff~\mymat{C}~\textrm{has full column rank.}
	\end{displaymath}
	
	Taken together with \cref{eq:U_Invariance__via__Ctilde}, we see that \cref{eq:U_Invariance__via__C} holds as well.
	
	\manualQED
	
	%---------------------------------:|----------------------------
	
	\section{The size of a Vandermonde matrix can determine its rank}
	Vandermonde matrices will be ubiquitous in the forthcoming analysis.
	In preparation, we highlight two rank conditions, using the Vandermonde matrix,
	\begin{equation}\label{eq:Vandermonde__Example}
		\mymat{V} ~:=~ 
		\begin{bmatrix}
			1 &\nu_1 & \nu_1^2 & \hdots & \nu_1^{c-1} \\
			1 &\nu_2 & \nu_2^2 & \hdots & \nu_2^{c-1} \\
			\vdots & \vdots & \vdots & \ddots & \vdots \\
			1 &\nu_r & \nu_r^2 & \hdots & \nu_r^{c-1} \\
		\end{bmatrix}.
	\end{equation}
	
	\begin{definition}[Nodes of a Vandermonde matrix]\label{def:Vandermonde_Nodes}
		The \(r\) complex numbers \(\{\nu_i\}_{i=1}^r\), whose powers constitute \(\mymat{V}\), are called its ``nodes''.
	\end{definition}

	\begin{lemma}[Node distinctness lets size dictate rank]\cite{horn2012matrix,hirsh2019centering}
		Suppose the nodes of \(\mymat{V}\) are distinct:
		\begin{displaymath}
			\forall~(i,j), \quad i \neq j \implies \nu_i \neq \nu_j.
		\end{displaymath}
		Then, either the rows or the columns of \(\mymat{V}\) are linearly independent:
		\begin{align}
			\mymat{V}~\textrm{has full row rank}~&\iff~ r \leq c, \label{eq:vandermonde_row_rank_lemma} \\
			\mymat{V}~\textrm{has full column rank}~&\iff~ c \leq r.\label{eq:vandermonde_column_rank_lemma}
		\end{align}
	\end{lemma}

	\section{On approximating the Koopman eigen-values via DMD}\label{s:Appendix_ProveModeNormScreens}
	
	Assuming that \PsiLiesInNRSpanOfrDistinctKEFs\, and \ICBeSpectrallyInformative, we prove that linearly consistent and well-sampled training data produces DMD eigen-values, \(\DMDEvals\), that contain the true Koopman eigen-values, \(\KEvals\). 
	Furthermore, the latter are characterized by the corresponding DMD modes being non-zero.
	
	We begin by generalizing an existence result from \cite{pan2020structure}, where it is unduly restricted to roots of unity, to allow for arbitrary and distinct eigen-values:
	\begin{proposition}\label{prop:PanDurai_Generalizability}
		Suppose \PsiLiesInNRSpanOfrDistinctKEFs.	
		If the DMD problem is well-sampled, then, the last snapshot, \(\myvec{z}_{\undelayedDMDorder+1}\), is a linear combination of the preceding snapshots.
		\begin{displaymath}
			\undelayedDMDorder \geq \finitekissdim \implies \myvec{z}_{\undelayedDMDorder+1} \in \mathcal{R}(\mymat{X}).
		\end{displaymath}
	\end{proposition}
	\begin{proof}
		We just need to recast the arguments used in Theorem 1 of \cite{pan2020structure}, using the machinery developed so far. 
		According to \cref{eq:vandermonde_column_rank_lemma,eq:vandermonde_row_rank_lemma}, the condition \(\undelayedDMDorder \geq \finitekissdim\) ensures that the last column of \(\mymat{\Theta}\) lies in the span of \(\mymat{\Theta}_1\) i.e.,
		\begin{displaymath}
			\mymat{\Theta}\,\myvec{e}_{\undelayedDMDorder+1}		\in \mathcal{R}(\mymat{\Theta}_1).
		\end{displaymath}
		If we pre-multiply both sides with \(\mymat{C}\), then, we can apply \cref{eq:KMF__of__Z,eq:KMF__of__X_Y} to reach the desired conclusion.
	\end{proof}
	
	Next, we leverage linear consistency and well-sampling to comment on  \(\tilde{\mymat{C}}\).
	\begin{proposition}\label{prop:WellPosed_Means_LinCon_KoopmanInvariance_R_Equivalent}
		Suppose \PsiLiesInNRSpanOfrDistinctKEFs\, and \ICBeSpectrallyInformative.
		If \( (\mymat{X},\mymat{Y})\) is linearly consistent and the DMD problem is well-sampled, then, the columns of \(\tilde{\mymat{C}}\) are linearly independent.
		\begin{displaymath}
			(\mymat{X},\mymat{Y})~\textrm{is linearly consistent} ~\&~	\undelayedDMDorder \geq \finitekissdim 
			~\implies~
			\tilde{\mymat{C}}~\textrm{has full column rank}.
		\end{displaymath}
	\end{proposition}
	\begin{proof}
		Firstly, if \(\mymat{A}\) is the matrix, specified  by linear consistency of \((\mymat{X},\mymat{Y})\), that maps \(\mymat{X}\) to \(\mymat{Y}\), i.e.,
		\begin{displaymath}
			\mymat{A} \mymat{X} = \mymat{Y},
		\end{displaymath}
		then, we have the following identity:
		\begin{equation}\label{eq:LinCon_KI_proof_equation1}
			(\mymat{A} \tilde{\mymat{C}} -  \tilde{\mymat{C}} \mymat{\Lambda})~
			\diag[\KEFVector(\state_1)] \mymat{\Theta}_1 = \mymat{0}.
		\end{equation}
		We can derive this identity by using \cref{eq:KMF__of__X_Y} to expand the data matrices \(\mymat{X}\) and \(\mymat{Y}\)  as follows:
		\begin{displaymath}
			\mymat{A} \EmphasiseReduction{\mymat{X}} = \EmphasiseReduction{\mymat{Y}} \iff 	(\mymat{A} \mymat{C}) \mymat{\Theta}_1 = (\mymat{C} \mymat{\Lambda})\mymat{\Theta}_1.
		\end{displaymath}
		The dependence on the initial condition \(\state_1\) can be factored out using the definition of \(\mymat{C}\) \cref{eq:C_definition} together with the fact that \(\mymat{\Lambda}\) and \( \diag[\KEFVector(\state_1)]\) commute:
		\begin{displaymath}
			\begin{aligned}
				\mymat{A} \EmphasiseReduction{\mymat{C}} \mymat{\Theta}_1 ~&=~
				\mymat{A} \tilde{\mymat{C}} \diag[\KEFVector(\state_1)] \mymat{\Theta}_1,
				\\
				\EmphasiseReduction{\mymat{C}} \mymat{\Lambda} \mymat{\Theta}_1
				~=~ \tilde{\mymat{C}}~ \EmphasiseReduction{\diag[\KEFVector(\state_1)] \mymat{\Lambda}} \mymat{\Theta}_1
				~&=~ (\tilde{\mymat{C}} \mymat{\Lambda})~ \diag[\KEFVector(\state_1)]  \mymat{\Theta}_1.
			\end{aligned}
		\end{displaymath}
		Combining these results, we have \cref{eq:LinCon_KI_proof_equation1}.

		However, the matrix \(\diag[\KEFVector(\state_1)] \mymat{\Theta}_1\) has full row rank.
		This can be deduced by first using \(\state_1\) being spectrally informative alongside \cref{eq:C_definition} to infer that \(\diag[\KEFVector(\state_1)]\) is invertible.
		Then, we can consider the \(\finitekissdim \times \undelayedDMDorder\) Vandermonde matrix \(\mymat{\Theta}_1\).
		Since it has distinct nodes, well-sampling \((n \geq r )\) lets us use \cref{eq:vandermonde_row_rank_lemma} to conclude that \(\mymat{\Theta}_1 \) has linearly independent rows.
		Since the product of two matrices with full row rank will also have full row rank,
		we find that the rows of \(\diag[\KEFVector(\state_1)] \mymat{\Theta}_1\) are linearly independent.
		
		So, \cref{eq:LinCon_KI_proof_equation1} becomes equivalent to the following identity:
		\begin{equation}\label{eq:LinCon_KI_proof_equation2}
			\mymat{A} \tilde{\mymat{C}} = \tilde{\mymat{C}} \mymat{\Lambda}.
		\end{equation}
		
		A specific interpretation of this identity, based on the construction of \(\mymat{\Lambda}\), yields the desired conclusion.
		To see this, observe that \(\mymat{\Lambda}\) is constructed in \cref{eq:KEvalsDiagonally} as a diagonal matrix, by collecting the Koopman eigen-values, \(\KEvals\).
		Now, these eigen-values are \(\finitekissdim\) distinct complex numbers.
		Hence, the resulting interpretation of \cref{eq:LinCon_KI_proof_equation2}- the columns of \(\tilde{\mymat{C}}\) are eigen-vectors of \(\mymat{A}\) corresponding to distinct eigen-values- tells us that \(\tilde{\mymat{C}}\) has linearly independent columns.
	\end{proof}
	
	With these pieces in place, we can establish the recovery of Koopman eigenvalues via Companion DMD.
	
	\subsection{Proof of \cref{thm:Suff4CompDMD2CatchAllNKnow}}
		
	We preface the proof by establishing that \(\mymat{C}\) has full column rank.
	Firstly, Proposition \ref{prop:WellPosed_Means_LinCon_KoopmanInvariance_R_Equivalent} tells us that the columns of \(\tilde{\mymat{C}}\) are linearly independent.		
	Building on this, we need only invoke \(\state_1\) being spectrally informative alongside (\ref{eq:C_definition}) to show that \(\mymat{C}\) also has full column rank.
	
	\mypara{DMD eigenvalues contain the Koopman eigenvalues}
	The key idea here is that well-sampling leads to a system of linear equations that can be peeled away, using the \(\finitekissdim\) distinct KEFs that span our dictionary \(\dictionary\), to produce the desired conclusion.
	
	Firstly, well-sampling means the Companion matrix \(\opt{\mymat{T}}\), defined in \cref{eq:Optimal__CompanionMatrix}, `maps' \(\mymat{X}\) to \(\mymat{Y}\) as follows:
	\begin{displaymath}
		\mymat{X} \opt{\mymat{T}} ~=~ \mymat{Y}.
	\end{displaymath}
	This can be derived by considering the least-squares interpretation of \cref{eq:Optimal_1StepPredictor}:
	\begin{equation}\label{eq:CompanionDMDObjFun}
		\myvec{c}^*[\mymat{Z}] ~=~ \mymat{X}^\dagger \myvec{z}_{\undelayedDMDorder+1} ~=~
		\begin{cases}
			\arg\min_{\myvec{c}}~~~\lVert \myvec{c} \rVert_2~&\myvec{z}_{n+1}\in \mathcal{R}(\mymat{X}) \\
			\textrm{subject to}~ \mymat{X} \myvec{c} ~=~\myvec{z}_{n+1}\\ \\
			\arg\min_{\myvec{c}}~~~ \lVert \mymat{X} \myvec{c} - \myvec{z}_{n+1} \rVert_2~&\textrm{Otherwise}
		\end{cases} .
	\end{equation}
	Since  \(\undelayedDMDorder \geq \finitekissdim\), \cref{prop:PanDurai_Generalizability} tells us that we are in the first case described above. 
	Hence, 
	\begin{displaymath}
		\mymat{X} \opt{\myvec{c}}[\mymat{Z}] ~=~ \myvec{z}_{\undelayedDMDorder+1},
	\end{displaymath}
	or equivalently, by \cref{eq:Optimal__CompanionMatrix,eq:Definition__Y},
	\begin{displaymath}
		\mymat{X} \opt{\mymat{T}} ~=~ \mymat{Y}.
	\end{displaymath}
	
	Now, this identity can be unpacked using the Koopman mode factorization of \(\mymat{X}\) and \(\mymat{Y}\) to give the following eigenvector-eigenvalue equation:
	\begin{equation}\label{eq:Vandermonde_Rows_Are_Left_Evecs}
		\mymat{\Theta}_1 \opt{\mymat{T}} ~=~ \mymat{\Lambda} \mymat{\Theta}_1,
	\end{equation}
	Such a reduction can be achieved by expanding both \(\mymat{X}\) and \(\mymat{Y}\) according to \cref{eq:KMF__of__X_Y},  and then discarding \(\mymat{C}\), owing to its full column rank:
	\begin{displaymath}
		\EmphasiseReduction{\mymat{X}} \opt{\mymat{T}} - \EmphasiseReduction{\mymat{Y}} ~=~\mymat{0}
		\iff
		\EmphasiseReduction{\mymat{C}} \left(\mymat{\Theta}_1 \opt{\mymat{T}} - \mymat{\Lambda} \mymat{\Theta}_1\right) ~=~ \mymat{0}
		\iff
		\mymat{\Theta}_1 \opt{\mymat{T}} - \mymat{\Lambda} \mymat{\Theta}_1 ~=~ \mymat{0}.
	\end{displaymath}
	Since \(\mymat{\Lambda}\) is a diagonal matrix, the resulting identity
	can be viewed as an eigenvector-eigenvalue equation.
	
	Consequently, the construction of \(\mymat{\Lambda}\) from the Koopman eigenvalues, \(\KEvals\), gives the desired conclusion.
	Firstly, observe that the rows of \(\mymat{\Theta}_1\) are the left eigenvectors of \(\opt{\mymat{T}}\) with the corresponding eigenvalues on the diagonal of \(\mymat{\Lambda}\). 
	Now, \cref{eq:KEvalsDiagonally} tells us that the Koopman eigenvalues, \( \KEvals\), constitute the diagonal of \(\mymat{\Lambda}\).
	Additionally, \cref{eq:defn__DMD_Eigenvalues} defines the DMD eigenvalues, \(\DMDEvals\), to be the eigenvalues of \(\opt{\mymat{T}}\).
	Combining these two observations, we have:
	\begin{equation}\label{eq:Koopman_Spectrum_Within_DMD_Spectrum}
		\KEvals ~\subseteq~\DMDEvals.
	\end{equation}

	\mypara{A DMD eigenvalue with a non-zero DMD mode lies in the Koopman spectrum}
	Moving onto the second part of \cref{thm:Suff4CompDMD2CatchAllNKnow}, we can prove the forward implication (paraphrased above) by observing that \(\mymat{\Theta}_1 \myvec{v}_{\mu} ~\neq~\myvec{0} \). 
	To see this, start with \( \mymat{X} \myvec{v}_{\mu} ~\neq~\myvec{0} \) and sequentially apply \cref{eq:KMF__of__X_Y} followed by the full column rank of \(\mymat{C}\):
	\begin{displaymath}
		\EmphasiseReduction{\mymat{X}} \myvec{v}_{\mu} ~\neq~\myvec{0} 
		\iff \EmphasiseReduction{\mymat{C}} \mymat{\Theta}_1 \myvec{v}_{\mu}~\neq~\myvec{0}
		\iff \mymat{\Theta}_1 \myvec{v}_{\mu}~\neq~\myvec{0}.
	\end{displaymath}

	Moreover, \(\mymat{\Theta}_1 \myvec{v}_{\mu}\) solves the eigenvector-eigenvalue equation associated with \(\mymat{\Lambda}\), for an eigenvalue of \(\mu\).
	This can be obtained by	starting with the definition of \(\myvec{v}_{\mu} \),
	pre-multiplying both sides with \(\mymat{\Theta}_1\) and, then, applying \cref{eq:Vandermonde_Rows_Are_Left_Evecs}:
	\begin{displaymath}
		\opt{\mymat{T}} \myvec{v}_{\mu}~=~\mu~\myvec{v}_{\mu}
		\implies
		\EmphasiseReduction{
			\mymat{\Theta}_1~\opt{\mymat{T}} 
		}
		\myvec{v}_{\mu}~=~\mu~\mymat{\Theta}_1 ~\myvec{v}_{\mu}~\implies~
		\mymat{\Lambda} (\mymat{\Theta}_1 \myvec{v}_{\mu}) ~=~\mu~( \mymat{\Theta}_1 \myvec{v}_{\mu} ).
	\end{displaymath}
	
	Consequently, \(\mu\) is an eigenvalue of \(\mymat{\Lambda}\) and hence, by \cref{eq:KEvalsDiagonally}, also lies in the Koopman spectrum i.e., \(\mu \in \KEvals\).

	\mypara{A DMD mode that corresponds to a Koopman eigenvalue must be non-zero}
	The backward implication can be established using the duality of the left and right eigenvectors of \(\opt{\mymat{T}}\).
	Firstly, if \(\mu\) is a Koopman eigenvalue, then, \cref{eq:Koopman_Spectrum_Within_DMD_Spectrum} tells us that \(\mu\) is also a DMD eigenvalue i.e., \(\mu ~\in~\DMDEvals\).
	So, we can let \(\myvec{w}_{\mu} \) denote the\footnote{We say ``the'' and not ``a'' because companion matrices can only have a single eigenvector for an eigenvalue, regardless of its multiplicity \cite{horn2012matrix}. } normalized left eigenvector of \(\opt{\mymat{T}}\) corresponding to the eigenvalue \(\mu\):
	\begin{displaymath}
		\myvec{w}_{\mu}^T \opt{\mymat{T}}~=~ \mu\,\myvec{w}_{\mu}^T, \quad \lVert \myvec{w}_{\mu} \rVert_2 ~=~ 1.
	\end{displaymath}
	Then, by duality, we have
	\begin{equation}\label{eq:Duality_eigen-vectors_CompanionMatrix}
		\myvec{w}_{\mu}^T \myvec{v}_{\mu} ~\neq~ 0.
	\end{equation}
	Since  \(\mu \in  \KEvals \), by \cref{eq:Vandermonde_Rows_Are_Left_Evecs}, there is a row of \(\mymat{\Theta}_1 \) that is a scaled version of \(\myvec{w}_{\mu}^T \). 
	Hence, by \cref{eq:Duality_eigen-vectors_CompanionMatrix}, we have,
	\begin{displaymath}
		\mymat{\Theta}_1 \myvec{v}_{\mu} ~\neq~ \myvec{0}.
	\end{displaymath}
	In light of the full column rank of \(\mymat{C}\) and \cref{eq:KMF__of__X_Y}, we see that \( \mymat{X} \myvec{v}_{\mu} \), the DMD mode corresponding to \(\mu \), has non-zero norm.
	\begin{displaymath}
		\mymat{\Theta}_1 \myvec{v}_{\mu} ~\neq~ \myvec{0} 
		\implies
		\EmphasiseReduction{\mymat{C} \mymat{\Theta}_1} \myvec{v}_{\mu} ~\neq~ \myvec{0} 
		\implies 
		\mymat{X} \myvec{v}_{\mu} ~\neq~ \myvec{0}.
	\end{displaymath}
	
	\manualQED
	
	\section{On DMD-DFT equivalence}\label{s:Appendix_DMD-DFT}
	\begin{lemma}[A vector equality that encodes \(\muDMD \equiv \textrm{DFT}\)]\label{lem:DMDDFT_VectorEquality}
		A necessary and sufficient condition for \(\muDMD\) (\cref{alg:MeanSubtractedDMD}) to be equivalent\footnote{As described in \cref{def:DMD_DFT_Equivalence}.} to a temporal DFT is that the \(\muDMD\) model, \(\opt{\myvec{c}}[\mymat{Z}_{\rm ms}]\), coincide with \(- \myvec{1}_{\undelayedDMDorder}\).
		\begin{displaymath}
			\muDMD \equiv \textrm{DFT}
			~~\iff~~
			\opt{\myvec{c}}[\mymat{Z}_{\rm ms}] ~=~ - \myvec{1}_{\undelayedDMDorder}.
		\end{displaymath}
	\end{lemma}
	\begin{proof}
		By \cref{def:DMD_DFT_Equivalence}, we have,
		\begin{displaymath}
			\muDMD \equiv \textrm{DFT}  \iff \msubDMDEvals ~~=~~ \{ z \neq 1~|~z^{\undelayedDMDorder+1}=1 \}.
		\end{displaymath}
		Since \(\msubDMDEvals\) are defined in \cref{alg:MeanSubtractedDMD} to be the eigenvalues of the Companion matrix \(\mymat{T}\left( \opt{\myvec{c}}[\mymat{Z}_{\rm ms}]  \right)\),
		we have the desired conclusion.
	\end{proof}
	
	\subsection{Proof of \cref{thm:musubNDFT}}\label{ss:Appendix_Proof_NecessaryNSufficient_msubDMDDFTequivalence}
	
	According to \cref{lem:DMDDFT_VectorEquality}, it is sufficient to establish the following identity:
	\begin{equation}\label{eq:muDMD__is__Projected1_n}
		\myvec{c}^*[\mymat{Z}_{\rm ms}] 
		~=~ -\mathbf{1}_\undelayedDMDorder  
		~~+~~ 
		\mathcal{P}_{\mathcal{N}(\mymat{X}_{\rm ms})}\myvec{1}_\undelayedDMDorder.
	\end{equation}
	This is simply because once \cref{eq:muDMD__is__Projected1_n} has been proven, we only need to leverage \cref{lem:DMDDFT_VectorEquality} to reach the desired conclusion:
	\begin{displaymath}
		\mathcal{P}_{\mathcal{N}(\mymat{X}_{\rm ms})}\myvec{1}_\undelayedDMDorder = \myvec{0}
		~\iff~
		\opt{\myvec{c}}[\mymat{Z}_{\rm ms}] ~=~ - \myvec{1}_{\undelayedDMDorder}
		~\iff~
		\muDMD \equiv \textrm{DFT}.		
	\end{displaymath}
	
	So, we begin by observing that \(\opt{\myvec{c}}[\mymat{Z}_{\rm ms}]\) is the closest point to the origin from an affine subspace described by the mean-subtracted data:
	\begin{equation}\label{eq:MeanSubDMD}
		\begin{aligned}
			\myvec{c}^*[\mymat{Z}_{\rm ms}] 
			~=~& \arg\min_{\myvec{c}}~~~\left\lVert \myvec{c} \right\rVert_2^2 \\
			~&\textrm{subject to}~ \mymat{X}_{\rm ms} \myvec{c} ~=~\myvec{z}_{\undelayedDMDorder+1}-\myvec{\mu}.
		\end{aligned}
	\end{equation}
	To see this, recall that for any matrix \(\mymat{A}\) and vector \(\myvec{b}\) of appropriate dimensions, the expression \(\mymat{A}^\dagger \myvec{b}\) can be interpreted as the minimum norm least squares solution, when \(\myvec{b} \in \mathcal{R}(\mymat{A})\).
	If we are to apply this in the context of \(\myvec{c}^*[\mymat{Z}_{\rm ms}] \), which is constructed according to \cref{eq:Define__X_ms,eq:Z__equals__X__z_nplus1,eq:Optimal_1StepPredictor} as follows,
	\begin{equation}\label{eq:c_star_Zms__Unpacked}
		\myvec{c}^*[\mymat{Z}_{\rm ms}] ~=~ \mymat{X}_{\rm ms}^\dagger (\myvec{z}_{\undelayedDMDorder+1} - \myvec{\mu}),
	\end{equation}
	then, we need only check if \(\myvec{z}_{\undelayedDMDorder+1} - \myvec{\mu}\) lies in the column-space of \(\mymat{X}_{\rm ms}\).
	This can be verified from \cref{eq:Defn_Temporal_Mean,eq:Define__Z_ms} which say that the columns of  \(\mymat{Z}_{\rm ms}\) sum to zero:
	\begin{displaymath}
		\EmphasiseReduction{
			\mymat{Z}_{\rm ms}
		} \myvec{1}_{\undelayedDMDorder+1} 
		~=~
		\EmphasiseReduction{
			\sum_{j=1}^{\undelayedDMDorder+1} \myvec{z}_j
		} - 
		\sum_{j=1}^{\undelayedDMDorder+1} \myvec{\mu}
		~=~
		(\undelayedDMDorder + 1) \myvec{\mu} - 
		(\undelayedDMDorder + 1) \myvec{\mu}
		~=~ 
		\myvec{0}.
	\end{displaymath}
	In the context of \cref{eq:Define__X_ms}, this means,
	\begin{equation}\label{eq:negative1n__feasible_for_muDMD}
		\mymat{X}_{\rm ms} \myvec{1}_{\undelayedDMDorder} + (\myvec{z}_{\undelayedDMDorder+1} - \myvec{\mu})
		~=~ \myvec{0}.
	\end{equation}
	In other words, \(\myvec{z}_{\undelayedDMDorder+1}-\myvec{\mu}\) lies in the range-space of \(\mymat{X}_{\rm ms}\).
	Hence, \cref{eq:c_star_Zms__Unpacked} can be re-written as the minimum norm least squares problem \cref{eq:MeanSubDMD}.
	
	Re-parametrizing the affine subspace gives us the following re-phrasal of \cref{eq:MeanSubDMD}:
	\begin{equation}\label{eq:MeanSubDMD__Re_Parametrized}
		\myvec{c}^*[\mymat{Z}_{\rm ms}] 
		~=~ -\mathbf{1}_\undelayedDMDorder  ~~+~~ \left( 
		\arg\min_{\myvec{d} \in \mathcal{N}(\mymat{X}_{\rm ms})}~~~\left\lVert \mathbf{1}_\undelayedDMDorder - \myvec{d} \right\rVert_2^2
		\right) .
	\end{equation}
	This stems from the feasible set of \cref{eq:MeanSubDMD},
	\begin{equation}\label{eq:AffineSS_4_msub}
		\{\myvec{c}~|~\mymat{X}_{\rm ms} \myvec{c} ~=~\myvec{z}_{\undelayedDMDorder+1}-\myvec{\mu} \},
	\end{equation}
	being an affine subspace, which is an object that results from the set addition of a subspace and a constant vector called the offset.
	In representing an affine subspace, we can use any of its elements as the offset.
	Since \cref{eq:negative1n__feasible_for_muDMD} tells us that \(-\myvec{1}_\undelayedDMDorder\) is an element of \cref{eq:AffineSS_4_msub}, the latter may be rewritten as follows:
	\begin{displaymath}
		\{\myvec{c}~|~\exists~\myvec{d}~\in~\mathcal{N}(\mymat{X}_{\rm ms})~\textrm{such that}~\myvec{c} = -\mathbf{1}_\undelayedDMDorder + \myvec{d} \}.
	\end{displaymath}
	Using this re-parametrization in conjunction with the identity,
	\begin{displaymath}
		\left\lVert - \mathbf{1}_\undelayedDMDorder + \myvec{d} \right\rVert_2^2
		~=~
		\left\lVert \mathbf{1}_\undelayedDMDorder - \myvec{d} \right\rVert_2^2,
	\end{displaymath}
	we can recast \cref{eq:MeanSubDMD} as \cref{eq:MeanSubDMD__Re_Parametrized}.
	
	Now, the best representation of \(\myvec{1}_{\undelayedDMDorder}\) in \(\mathcal{N}(\mymat{X}_{\rm ms})\) is \(\mathcal{P}_{\mathcal{N}(\mymat{X}_{\rm ms})}\myvec{1}_\undelayedDMDorder\), the orthogonal projection of \(\myvec{1}_{\undelayedDMDorder}\) onto \(\mathcal{N}(\mymat{X}_{\rm ms})\):
	\begin{equation}\label{eq:1_n__Best_Rep_In_N_Xms}
		\mathcal{P}_{\mathcal{N}(\mymat{X}_{\rm ms})}\myvec{1}_\undelayedDMDorder
		~=~	
		\arg\min_{\myvec{d} \in \mathcal{N}(\mymat{X}_{\rm ms})}~~~\left\lVert \mathbf{1}_\undelayedDMDorder - \myvec{d} \right\rVert_2^2
	\end{equation}
	
	Feeding \cref{eq:1_n__Best_Rep_In_N_Xms} back to \cref{eq:MeanSubDMD__Re_Parametrized} gives
	\cref{eq:muDMD__is__Projected1_n}.

	\manualQED
	\begin{remark}
		\Cref{thm:musubNDFT} does not assume that the dictionary \PsiLiesInNRSpanOfrDistinctKEFs.
	\end{remark}
	\subsection{Equivalence when \(\mymat{X}_{\rm ms}\) has linearly dependent columns}\label{ss:Appendix_Proof_Counterexamples_4_Hirsch_numerics_interpretation}
	In order to prove  \cref{thm:XmsnotLI_DMD_DFTequiv}, we establish the following algebraic result:
	\begin{lemma}\label{lem:Simplify_1_Sans_EasyMean}
		\begin{displaymath}
			\mathbf{1}_r~-~\left(\frac{1}{n+1}\right)\mymat{\Theta} \mathbf{1}_{n+1} ~=~ 
			-\left(\frac{1}{n+1} \right)(\mymat{\Lambda} - \mymat{I}) \mymat{\Theta}_1 
			\begin{bmatrix}
				n \\ n-1\\ \vdots \\ 1
			\end{bmatrix}.
		\end{displaymath}
	\end{lemma}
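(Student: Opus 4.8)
The plan is to prove the identity one row at a time. Both sides are vectors in \(\mathbb{C}^r\), and since \(\mymat{\Theta}\), \(\mymat{\Lambda}\) and \(\mymat{\Theta}_1\) are assembled entirely from the Koopman eigenvalues, the \(i\)-th entry of each side involves only \(\lambda_i\). It therefore suffices to fix a single node \(\lambda := \lambda_i\) and establish the corresponding scalar identity; the full vector statement then follows by ranging over \(i = 1, \dots, r\).

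First I would write out the two components explicitly. The \(i\)-th entry of \(\mymat{\Theta}\mathbf{1}_{n+1}\) is the geometric partial sum \(\sum_{k=0}^{n}\lambda^k\), so the left-hand side reads \(1 - \tfrac{1}{n+1}\sum_{k=0}^{n}\lambda^k\). On the right, the column \([n, n-1, \dots, 1]^T\) pairs with row \(i\) of \(\mymat{\Theta}_1\), namely \([1, \lambda, \dots, \lambda^{n-1}]\), to give \(\sum_{k=0}^{n-1}(n-k)\lambda^k\), while the diagonal factor \(\mymat{\Lambda} - \mymat{I}\) contributes \((\lambda - 1)\). Clearing the common factor \(\tfrac{1}{n+1}\) and absorbing the sign, the claim reduces to the scalar identity
\begin{displaymath}
(1-\lambda)\sum_{k=0}^{n-1}(n-k)\lambda^k = (n+1) - \sum_{k=0}^{n}\lambda^k = \sum_{k=1}^{n}\left(1 - \lambda^k\right),
\end{displaymath}
where the last equality simply drops the vanishing \(k=0\) term.

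The key step is to prove this scalar identity cleanly rather than by a brute-force telescoping expansion of the left-hand side. I would instead start from \(\sum_{k=1}^{n}(1-\lambda^k)\) and factor each summand through the finite geometric series \(1 - \lambda^k = (1-\lambda)\sum_{j=0}^{k-1}\lambda^j\), so that \(\sum_{k=1}^{n}(1-\lambda^k) = (1-\lambda)\sum_{k=1}^{n}\sum_{j=0}^{k-1}\lambda^j\). Swapping the order of summation, a fixed power \(\lambda^j\) with \(0 \le j \le n-1\) is counted once for every \(k\) satisfying \(j+1 \le k \le n\), that is, exactly \(n-j\) times; this recovers \((1-\lambda)\sum_{j=0}^{n-1}(n-j)\lambda^j\) and closes the identity. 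I expect the only real care to be bookkeeping: handling the boundary indices \(k=0\) and \(k=n\) under reindexing, and confirming that the sign produced by \(\mymat{\Lambda}-\mymat{I}\) matches the \((1-\lambda)\) that appears after rearrangement. There is no conceptual obstacle here, so the argument amounts to a clean double-counting of the geometric factorization.
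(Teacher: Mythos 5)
Your proposal is correct and follows essentially the same route as the paper's proof: work component-wise, rewrite $(n+1)-\sum_{k=0}^{n}\lambda^k$ as the paired sum $\sum_{k=1}^{n}(1-\lambda^k)$, and factor out $(1-\lambda)$ via the geometric-series identity so that each power $\lambda^j$ is counted $n-j$ times. Your explicit double-counting/summation-swap is just a more detailed write-up of the paper's "factor out $(\lambda_i-1)$ from each of the remaining $n$ terms" step.
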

	\begin{proof}
		Consider the \(i\)-th component of the vector on the left:
		\begin{displaymath}
			1 - \left(\frac{1}{n+1}\right) ~
			\begin{bmatrix}
				1 & \lambda_i & \lambda_i^2 & \hdots & \lambda_i^\undelayedDMDorder
			\end{bmatrix}
			~ \mathbf{1}_{n+1}.
		\end{displaymath}
		Expanding the inner product and factoring out the denominator, we get:
		\begin{displaymath}
			\frac{1}{n+1} \left( (n+1) - (1+\lambda_i + \lambda_i^2 +\hdots+ \lambda_i^n) \right).
		\end{displaymath}
		The two sums in the numerator have \( (n+1)\) terms each.
		By summing them in pairs, we obtain:
		\begin{displaymath}
			-\frac{1}{n+1} \left( (\lambda_i-1) + (\lambda_i^2-1) + \hdots + (\lambda_i^n - 1 ) \right).
		\end{displaymath}
		By factoring out \((\lambda_i-1)\) from each of the remaining \(n\) terms, we get the following:
		\begin{displaymath}
			- \frac{\lambda_i-1}{n+1}
			\begin{bmatrix}
				1&\lambda_i&...&\lambda_i^{n-1}
			\end{bmatrix}
			\begin{bmatrix}
				n \\ n-1\\ \vdots \\ 1
			\end{bmatrix}.
		\end{displaymath}
		Collect the components into a column vector, and group terms to reach the desired conclusion.
	\end{proof}
	
	\subsubsection{Proof of \cref{thm:XmsnotLI_DMD_DFTequiv}}

	We will begin by explicitly constructing the dictionary alluded to and, then, show that it indeed leads to DMD-DFT equivalence despite \(\mymat{X}_{\rm ms}\) having linearly dependent columns.
	
	\mypara{Constructing the counter-example}
	
	Let \(\myvec{l}\) denote the last column of \(\mymat{\Theta}_1\) and \(\myvec{l}^\perp\) be the projection of \(\myvec{l}\) orthogonal to the span of the first \(\undelayedDMDorder-1\) columns of \(\mymat{\Theta}_1\). 
	\begin{equation}\label{eq:Define__l__l_perp}
		\myvec{l}~:=~
		\begin{bmatrix}
			\lambda_1^{\undelayedDMDorder-1} \\ \lambda_2^{\undelayedDMDorder-1} \\ \vdots \\ \lambda_r^{\undelayedDMDorder-1}
		\end{bmatrix},\quad
		\myvec{l}^\perp~:=~ \mathcal{P}_{\mathcal{N}(\mymat{\Theta}_2^{H})} \myvec{l}.
	\end{equation}
	Here, we have used \cref{eq:Vandermonde_and_submatrices} to denote the sub-matrix formed by the first \(n-1\) columns of \(\mymat{\Theta}\) as \(\mymat{\Theta}_2\). 
	Now,  we can define  the dictionary, \(\dictionary\), as follows:
	\begin{equation}\label{eq:Define__TheThornyDictionary}
		\tilde{\mymat{C}}_0~:=~(\myvec{l}^\perp)^{H}(\mymat{\Lambda} - \mymat{I})^{-1}, \quad \tilde{\mymat{C}} ~=~ \tilde{\mymat{C}}_0 \diag[\KEFVector(\state_1)]^{-1}, \quad \dictionary ~=~ \tilde{\mymat{C}} \myvec{\phi}.  
	\end{equation}
	
	\mypara{Validation}
	
	We begin with a quick sanity check that our observables are indeed non-trivial.
	By \cref{eq:vandermonde_column_rank_lemma}, \(n \leq r \) ensures that \(\mymat{\Theta}_1 \in \mathbb{C}^{r \times n}\) has full column rank.
	So, \(\myvec{l}\), the last column of \(\mymat{\Theta}_1\), will not be in the span of the remaining columns.
	Hence, \(\myvec{l}^\perp \neq \myvec{0}\) and our observables, represented by \(\tilde{\mymat{C}}\), are non-trivial.

	Our objective is to show that \(\mymat{X}_{\rm ms}\) has linearly dependent columns and, simultaneously, satisfies \(\mathcal{P}_{\mathcal{N}(\mymat{X}_{\rm ms})} \myvec{1}_n = \myvec{0} \).
	
	\mysubpara{Columns of \(\mymat{X}_{\rm ms}\) are linearly dependent}
	Rank defectiveness of \(\mymat{X}_{\rm ms}\) follows directly from its shape.
	The identities \cref{eq:Zmat4CompDMD,eq:Define__Z_ms,eq:Define__X_ms}, when taken together, tell us that \(\mymat{X}_{\rm ms}\) is of size \(\dictionarylength \times \undelayedDMDorder\).
	Since \(\tilde{\mymat{C}}\) is a row vector, \(\dictionarylength = 1\) and, hence,  \( \mymat{X}_{\rm ms}\) is also a row vector.
	The condition \(\undelayedDMDorder \geq 2 \) means \( \mymat{X}_{\rm ms}\) has at least 2 columns and these must be linearly dependent as it only has a single row.
	
	\mysubpara{\(\muDMD\) is equivalent to a DFT}
	We can prove DMD-DFT equivalence by showing that \(\mathcal{N}(\mymat{X}_{\rm ms})\) coincides with \(\mathcal{R}(\mymat{B})\), where \( \mymat{B} \in \mathbb{C}^{\undelayedDMDorder \times (\undelayedDMDorder-1)} \) is the banded matrix defined thus:
	\begin{displaymath}
		\mymat{B}~:=~\begin{bmatrix}
			-1 &   &      \\
			1 & \ddots  &      \\
			& \ddots  &-1     \\
			&   &  1   
		\end{bmatrix}.
	\end{displaymath}
	Evidently, \( \mymat{B}^H \myvec{1}_n = \myvec{0}\). 
	Indeed, the columns of \(\mymat{B} \) form a basis for the orthogonal complement of the span of \(\myvec{1}_n\).
	\begin{displaymath}
		\mathcal{R}(\mymat{B})~=~\mathcal{N}(\myvec{1}_n^H).
	\end{displaymath}
	If we can show that \(\mathcal{R}(\mymat{B}) = \mathcal{N}(\mymat{X}_{\rm ms})\), then, \cref{thm:musubNDFT} tells us that we will have DMD-DFT equivalence:
	\begin{displaymath}
		\mathcal{R}(\mymat{B}) = \mathcal{N}(\mymat{X}_{\rm ms})
		\implies \mathcal{P}_{\mathcal{N}(\mymat{X}_{\rm ms})} \myvec{1}_n 
		~=~ \mathcal{P}_{\mathcal{R}(\mymat{B})} \myvec{1}_n 
		~=~  \mathcal{P}_{\mathcal{N}(\myvec{1}_n^H)} \myvec{1}_n
		~=~ \myvec{0}.
	\end{displaymath}
	So, we have an alternative approach to proving DMD-DFT equivalence.
	
	Before proceeding further, we perform some algebraic simplifications.
	The specific choice of \(\tilde{\mymat{C}}\) in \cref{eq:Define__TheThornyDictionary}, when applied alongside \cref{eq:C_definition,eq:KMF__of__Z}, gives,
	\begin{displaymath}
		\mymat{Z}= \tilde{\mymat{C}}_0 \mymat{\Theta},
	\end{displaymath}
	which in turn simplifies \(\myvec{\mu}\) thus:
	\begin{equation}\label{eq:Counterexample_Compact_mu}
		\myvec{\mu} = \tilde{\mymat{C}}_0 (\mymat{\Theta} \mathbf{1}_{n+1}) \left( \frac{1}{n+1} \right).
	\end{equation}
	Consequently, \(\mymat{Z}_{\rm ms} \) and \(\mymat{X}_{\rm ms}\) may be factored as follows:
	\begin{displaymath}
		\mymat{Z}_{\rm ms} = 
		\begin{bmatrix}
			- \myvec{\mu} & \tilde{\mymat{C}}_0
		\end{bmatrix}
		\begin{bmatrix}
			\mathbf{1}_{n+1}^{T}\\ \mymat{\Theta} 
		\end{bmatrix}
		, \quad 
		\mymat{X}_{\rm ms} = 
		\begin{bmatrix}
			- \myvec{\mu} & \tilde{\mymat{C}}_0
		\end{bmatrix}
		\begin{bmatrix}
			\mathbf{1}_{n}^{T}\\ \mymat{\Theta}_1
		\end{bmatrix}.
	\end{displaymath}
	
	Now, we can show that \(\mathcal{R}(\mymat{B}) \subseteq \mathcal{N}(\mymat{X}_{\rm ms}) \).
	Consider the product \(\mymat{X}_{\rm ms} \mymat{B} \) in light of the columns of \(\mymat{B}\) being orthogonal to \(\myvec{1}_n\).
	\begin{displaymath}
		\mymat{X}_{\rm ms} \mymat{B}
		~=~ 
		\begin{bmatrix}
			- \myvec{\mu} & \tilde{\mymat{C}}_0
		\end{bmatrix}
		\begin{bmatrix}
			\mathbf{1}_{n}^{T}\\ \mymat{\Theta}_1
		\end{bmatrix}
		\mymat{B}
		~=~\tilde{\mymat{C}}_0 \mymat{\Theta}_1 \mymat{B}.
	\end{displaymath}
	The matrix product \( \mymat{\Theta}_1 \mymat{B} \) may be re-written as follows with a Vandermonde matrix to the right.
	\begin{displaymath}
		\begin{aligned}
			\mymat{\Theta}_1 \mymat{B} &=
			\begin{bmatrix}
				\lambda_1 -1 & \lambda_1^2 - \lambda_1 & \hdots& \lambda_1^{n-1} - \lambda_1^{n-2} \\
				\lambda_2 -1 & \lambda_2^2 - \lambda_2 & \hdots& \lambda_2^{n-1} - \lambda_2^{n-2} \\
				\vdots & \vdots & \ddots & \vdots \\
				\lambda_r -1 & \lambda_r^2 - \lambda_r & \hdots& \lambda_r^{n-1} - \lambda_r^{n-2} \\
			\end{bmatrix}\\
			&=
			\begin{bmatrix}
				\lambda_1 - 1 & & & \\
				& \lambda_2-1 & &  \\
				& & \ddots &  \\
				& & & \lambda_r-1
			\end{bmatrix}
			\begin{bmatrix}
				1 & \lambda_1 & \hdots & \lambda_1^{n-2} \\
				1 & \lambda_2 & \hdots & \lambda_2^{n-2} \\
				\vdots & \vdots & \ddots & \vdots \\
				1 & \lambda_r & \hdots & \lambda_r^{n-2} \\
			\end{bmatrix}\\
			&= (\mymat{\Lambda} - \mymat{I}) \mymat{\Theta}_2.
		\end{aligned}
	\end{displaymath}
	This lets us use \cref{eq:Define__TheThornyDictionary} to bring in \(\myvec{l}^\perp\), whose definition in \cref{eq:Define__l__l_perp} completes this line of thought:
	\begin{displaymath}
		\tilde{\mymat{C}}_0 \EmphasiseReduction{\mymat{\Theta}_1 \mymat{B}}
		~=~ \EmphasiseReduction{\tilde{\mymat{C}}_0 (\mymat{\Lambda} - \mymat{I})} \mymat{\Theta}_2 
		~=~ \EmphasiseReduction{(\myvec{l}^\perp)^{H} \mymat{\Theta}_2} 
		~=~ \myvec{0} \implies
		\mathcal{R}(\mymat{B}) \subseteq \mathcal{N}(\mymat{X}_{\rm ms}) .
	\end{displaymath}    
	
	Now, to show that  \(\mathcal{R}(\mymat{B}) = \mathcal{N}(\mymat{X}_{\rm ms}) \), we need only prove that \(\mymat{X}_{\rm ms} \neq \mymat{0}\).
	To see this, observe that \(\mathcal{R}(\mymat{B})\) is a \(n-1\) dimensional subspace of \(\mathbb{C}^n\).
	So, if \(\mathcal{R}(\mymat{B})\) is a strict subset of \(\mathcal{N}(\mymat{X}_{\rm ms})\), then, by the rank-nullity theorem, \(\mymat{X}_{\rm ms} = \mymat{0}\).
	Therefore, by contraposition, when \(\mymat{X}_{\rm ms}\) is non-zero, \(\mathcal{R}(\mymat{B})\) coincides with \(\mathcal{N}(\mymat{X}_{\rm ms})\).
	
	Presently, the first column of \(\mymat{X}_{\rm ms}\) is non-zero.
	To deduce this, we can start by unpacking said column of \(\mymat{X}_{\rm ms}\):
	\begin{displaymath}
		\mymat{X}_{\rm ms} \myvec{e}_1 
		~=~ 
		\begin{bmatrix}
			- \myvec{\mu} & \tilde{\mymat{C}}_0
		\end{bmatrix}
		\begin{bmatrix}
			\mathbf{1}_{n}^{T}\\ \mymat{\Theta}_1
		\end{bmatrix} \myvec{e}_1 
		~=~ -\myvec{\mu} + \tilde{\mymat{C}}_0 \mathbf{1}_r .
	\end{displaymath}
	If we expand \(\myvec{\mu} \) as in \cref{eq:Counterexample_Compact_mu}, it becomes amenable to apply \cref{lem:Simplify_1_Sans_EasyMean}:
	\begin{displaymath}
		\begin{aligned}
			-\EmphasiseReduction{\myvec{\mu}} + \tilde{\mymat{C}}_0 \mathbf{1}_r ~&=~ 
			\tilde{\mymat{C}}_0 
			\EmphasiseReduction{
				\left[\mathbf{1}_r - (\mymat{\Theta} \mathbf{1}_{n+1})\left(\frac{1}{n+1}\right)\right]
			}\\
			~&=~ \tilde{\mymat{C}}_0 (\mymat{\Lambda} - \mymat{I}) \mymat{\Theta}_1 
			\begin{bmatrix}
				n \\ n-1\\ \vdots \\ 1
			\end{bmatrix}
			\left(\frac{-1}{n+1}\right).
		\end{aligned}        
	\end{displaymath}
	Once again, we can use \cref{eq:Define__TheThornyDictionary,eq:Define__l__l_perp} to introduce \(\myvec{l}^\perp\) and leverage its orthogonality to the first \(n-1\) columns of  \(\mymat{\Theta}_1\):
	\begin{displaymath}
		\begin{aligned}
			\EmphasiseReduction{	\tilde{\mymat{C}}_0 (\mymat{\Lambda} - \mymat{I})} \mymat{\Theta}_1 
			\begin{bmatrix}
				n \\ n-1\\ \vdots \\ 1
			\end{bmatrix}
			\left(\frac{-1}{n+1}\right) 
			~=~& 
			\EmphasiseReduction{
				(\myvec{l}^\perp)^{H} \mymat{\Theta}_1 
				\begin{bmatrix}
					n \\ n-1\\ \vdots \\ 1
				\end{bmatrix}
			}
			\left(\frac{-1}{n+1}\right) \\
			~=~ & \EmphasiseReduction{(\myvec{l}^\perp)^{H} \myvec{l}} \left(\frac{-1}{n+1}\right)  \\
			~=~ & \lVert \myvec{l}^\perp \rVert^2 \left(\frac{-1}{n+1}\right) .
		\end{aligned}
	\end{displaymath}
	Since we've already seen that \(\myvec{l}^\perp \neq \myvec{0} \), the matrix-vector product \(\mymat{X}_{\rm ms} \myvec{e}_1 \neq \myvec{0} \).
	
	Therefore, we have DMD-DFT equivalence.
	
	\manualQED
	
	\subsection{A modal decomposition of mean-subtracted data}\label{ss:Appendix_Zms_Cms_Thetams}
	When the Koopman mode expansion possesses only a finite number of terms, \cref{lem:KMF_of_Z} tells us that the time series \(\mymat{Z}\) can be written as the product of \(\mymat{C} \) and \(\mymat{\Theta}\).
	Here, we derive an analogous factorization for the mean-removed data-set \(\mymat{Z}_{\rm ms}\).
	
	\begin{definition}[A conformal partitioning of \(\mymat{C}\) and \(\mymat{\Theta}\)]\label{defn:msub__illustrating__partition}
		Suppose \PsiLiesInNRSpanOfrDistinctKEFs\, and \ICBeSpectrallyInformative.	
		Additionally, let \(1\) be a Koopman eigenvalue i.e., \(1 \in \KEvals.\)
		Then, \cref{eq:KMF__of__Z} motivates defining \(\myvec{c}_1\), \(\mymat{C}_{\not \owns 1}\) and \(\mymat{\Theta}_{\not \owns 1} \) via the following conformal partitioning:
		\begin{equation}
			\mymat{C} ~=:~ \begin{bmatrix}
				\myvec{c}_1	& \mymat{C}_{\not \owns 1}
			\end{bmatrix}, \quad
			\mymat{\Theta} ~=:~ 
			\begin{bmatrix}
				\myvec{1}_{\undelayedDMDorder+1}^T \\
				\mymat{\Theta}_{\not \owns 1}
			\end{bmatrix}.
		\end{equation}
		To paraphrase, \(\myvec{c}_1\) is the column of \(\mymat{C}\) that multiplies \(\myvec{1}_{n+1}^T\) in the product \(\mymat{C} \mymat{\Theta}\), \(\mymat{C}_{\not \owns 1}\) is the sub-matrix of \(\mymat{C}\) obtained by excluding \(\myvec{c}_1\) and
		\(\mymat{\Theta}_{\not \owns 1} \) is the sub-matrix of \(\mymat{\Theta} \) produced by deleting the row corresponding to \(\myvec{1}_{n+1}^T\). 
	\end{definition}

	\begin{table}[tbhp]\label{tab:Constructing__C_ms_Theta_ms}
		\caption{\underline{Constructing \(\mymat{C}_{\rm ms}\) and \(\mymat{\Theta}_{\rm ms}\):} There are four possible scenarios in mean-subtraction, depending on the presence of a Koopman eigenvalue at 1 and the value of \(\myvec{\mu}\).
			The peculiarities of each case are incorporated in the definitions of \(\mymat{C}_{\rm ms}\) and \(\mymat{\Theta}_{\rm ms}\) \emph{(last two columns)}, with an eye towards inducing an analogue of \cref{eq:KMF__of__Z} for \(\mymat{Z}_{\rm ms}\).   }
		\begin{center}
			\begin{tabular}{|c|c|c||c|c|}
				\hline
				Case & \multicolumn{2}{c||}{Attributes defining the case} & \multicolumn{2}{c|}{Case properties}\\\cline{2-5}
				& Spectral property & Mean value  & \(\mymat{C}_{\rm ms}\) & \(\mymat{\Theta}_{\rm ms}\) \\ \hline
				A  & \(1 \notin \KEvals\) & \(\myvec{\mu} = \myvec{0} \) & 	\(\mymat{C} \)& \(\mymat{\Theta}\) \\[10 pt]
				B & \(''\) & \(\myvec{\mu} \neq \myvec{0}\) 
				& 	\(
				\begin{bmatrix}
					\mymat{C} & -\myvec{\mu}
				\end{bmatrix}
				\)
				&  \(
				\begin{bmatrix}
					\mymat{\Theta} \\ \mathbf{1}_{n+1}^{T}
				\end{bmatrix} 
				\)\\[10 pt]
				\hline
				C  & \(1 \in \KEvals\)  & \(\myvec{\mu} = \myvec{c}_1 \) & 	\(\mymat{C}_{\not \owns 1}\) & \( \mymat{\Theta}_{\not \owns 1}\)\\[10 pt]
				D  & \(''\) & \( \myvec{\mu} \neq \myvec{c}_1 \) & 
				\(
				\begin{bmatrix}
					\myvec{c}_1 - \myvec{\mu} & \mymat{C}_{\not \owns 1}
				\end{bmatrix}
				\) & \(
				\begin{bmatrix}
					\myvec{1}_{n+1}^T \\
					\mymat{\Theta}_{\not \owns 1}
				\end{bmatrix}
				\) \\[10 pt]
				\hline
			\end{tabular}
		\end{center}
	\end{table}
	
	\begin{lemma}\label{lem:KMF__of__Z_ms}
		Suppose \PsiLiesInNRSpanOfrDistinctKEFs\, and \ICBeSpectrallyInformative.
		
		If we construct the two matrices \(\mymat{C}_{\rm ms}\) and  \(\mymat{\Theta}_{\rm ms}\) as described by \cref{defn:msub__illustrating__partition} and  \cref{tab:Constructing__C_ms_Theta_ms}, then, every column of \(\mymat{C}_{\rm ms}\) is non-zero, \(\mymat{\Theta}_{\rm ms}\) has distinct nodes and, more importantly,
		\begin{equation}\label{eq:KMF__of__Z_ms}
			\mymat{Z}_{\rm \rm ms} = \mymat{C}_{\rm ms} \mymat{\Theta}_{\rm ms}.
		\end{equation}
		In other words, every row of \(\mymat{Z}_{\rm \rm ms}\) lies in the row space of the Vandermonde matrix \(\mymat{\Theta}_{\rm ms}\), which is related to but potentially different from \(\mymat{\Theta}\).
	\end{lemma}
	\begin{proof}
		A direct application of \cref{lem:KMF_of_Z} to \cref{eq:Define__Z_ms}, in the context of \cref{defn:msub__illustrating__partition} and \cref{tab:Constructing__C_ms_Theta_ms}.
	\end{proof}
	\begin{remark}\label{rem:Submatrices__of__Theta_ms}
		We can extend the parallel between \cref{eq:KMF__of__Z} and \cref{eq:KMF__of__Z_ms} by defining \((\mymat{\Theta}_{\rm ms})_j\) to be the sub-matrix of \(\mymat{\Theta}_{\rm ms}\) produced by deleting the last \(j\) columns.
		Consequently, \(\mymat{X}_{\rm ms}\) (defined in \cref{eq:Define__X_ms}) has the following Koopman mode factorization:
		\begin{equation}\label{eq:KMF__of__X_ms}
			\mymat{X}_{\rm ms} ~=~ \mymat{C}_{\rm ms}\, (\mymat{\Theta}_{\rm ms})_1.
		\end{equation}
	\end{remark}
	\begin{remark}
		The analogy between \cref{eq:KMF__of__Z} and \cref{eq:KMF__of__Z_ms} suggests that the nodes of \(\mymat{\Theta}_{\rm ms}\) will be as central to the upcoming analysis, as the nodes of \(\mymat{\Theta}\) (namely the Koopman eigenvalues \(\KEvals\)) were in the justification of \cref{thm:Suff4CompDMD2CatchAllNKnow}.
		Hence, we formally recognize them thus:
		\begin{displaymath}
			\sigma(\mymat{\Theta}_{\rm ms}) ~:=~\textrm{Nodes of the Vandermonde matrix}~ \mymat{\Theta}_{\rm ms}.
		\end{displaymath}
	\end{remark}

	\subsection{Over-sampling and linear consistency prevent equivalence}\label{ss:Appendix_Proof_None_If_Oversampled_and_LinCon}
	We preface the justification of \cref{thm:DMDDFT_Oversampled} by partitioning the outcome of mean subtraction into four scenarios, depending on the presence of 1 in the ``mean-subtracted Koopman spectrum'' \(\sigma(\mymat{\Theta}_{\rm ms})\) and the Koopman spectrum \(\KEvals\) (\Cref{tab:4_possibilities_of_msub}).
	This classification helps develop auxiliary guarantees on the temporal mean  and \(\mymat{C}_{\rm ms}\) which simplify the proof of  \cref{thm:DMDDFT_Oversampled}.

	\begin{table}[tbhp]\label{tab:4_possibilities_of_msub}
		\caption{\underline{Delineating the effect of mean subtraction:} There are four possible outcomes of mean-removal, depending on the presence (or absence) of \(1\) in the nodes of \(\mymat{\Theta}_{\rm ms} \)  and in the Koopman eigen-values. 
			For each case, we detail \(\sigma(\mymat{\Theta}_{\rm ms})\) alongside \(\mymat{C}_{\rm ms}\) and \(\mymat{\Theta}_{\rm ms}\) (whenever they have a simple expression). }
		\begin{center}
			\begin{tabular}{|c|c|c||c|c|c|}
				\hline 
				Case & \multicolumn{2}{c||}{Attributes defining the case} & \multicolumn{3}{c|}{Case properties}\\\cline{2-6}
				& Effect of removing \(\myvec{\mu}\)  & Spectral property &\( \sigma(\mymat{\Theta}_{\rm ms}) \) &  \(\mymat{C}_{\rm ms}\) & \(\mymat{\Theta}_{\rm ms}\) \\ \hline
				
				I & \(1 \in\sigma(\mymat{\Theta}_{\rm ms})\) & \(1 \notin \KEvals\) &  \(\KEvals \cup \{ 1 \}\) & &  \\
				
				II & '' & \(1 \in \KEvals\) & \(\KEvals\)  & & \(\mymat{\Theta}\) \\
				
				\hline
				III & \(1 \notin \sigma(\mymat{\Theta}_{\rm ms})\) &\(1 \notin \KEvals\) & \(\KEvals\)   & \(\mymat{C}\)& \(\mymat{\Theta}\)\\
				
				IV & '' & \(1 \in \KEvals\) & \(\KEvals \cap \{ 1 \}^c \)   & \( \mymat{C}_{\not \owns 1} \)& \\
				\hline
			\end{tabular}
		\end{center}
	\end{table}

	\subsubsection{Auxiliary guarantees}
	
	Firstly, we show that a Koopman invariant dictionary ensures the temporal mean, \(\myvec{\mu}\), is zero if and only if we are in Case III.
	
	\begin{lemma}\label{lem:Zero_mu_iff_Case_III}
		Suppose \PsiLiesInNRSpanOfrDistinctKEFs\, and \ICBeSpectrallyInformative.
		
		If \(\tilde{\mymat{C}}\) has full column rank, then, the mean \(\myvec{\mu}\) is zero only in Case III of \cref{tab:4_possibilities_of_msub}.
		\begin{displaymath}
			\myvec{\mu}=\myvec{0} ~\iff~\textrm{Case III}.
		\end{displaymath}
	\end{lemma}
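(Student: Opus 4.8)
The plan is to exploit the fact that the four cases catalogued in \cref{tab:4_possibilities_of_msub} are mutually exclusive and jointly exhaustive, since they arise from branching on the two binary conditions $1 \in \sigma(\mymat{\Lambda}_{\rm ms})$ and $1 \in \sigma(\mymat{\Lambda})$. Because of this partition, I would split the biconditional $\myvec{\mu} = \myvec{0} \iff \textrm{Case III}$ into showing that the three remaining cases each force $\myvec{\mu} \neq \myvec{0}$ (the forward direction), and that Case III forces $\myvec{\mu} = \myvec{0}$ (the backward direction). The lemma is then essentially a consolidation of the two preceding propositions against this table.

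For the forward direction I would argue the contrapositive, i.e.\ that Cases I, II and IV all entail $\myvec{\mu} \neq \myvec{0}$, so that $\myvec{\mu} = \myvec{0}$ can occur only in Case III. Cases I and II are characterized by $1 \in \sigma(\mymat{\Lambda}_{\rm ms})$, so invoking the contrapositive of \cref{prop:LinearConsistency_eliminates_a_pesky_situation} (full column rank of $\tilde{\mymat{C}}$ together with $1 \in \sigma(\mymat{\Lambda}_{\rm ms})$) immediately yields $\myvec{\mu} \neq \myvec{0}$ in both. Case IV is characterized by $1 \in \sigma(\mymat{\Lambda})$, so \cref{prop:1_Ensures_NonZeroMean} applies directly to give $\myvec{\mu} \neq \myvec{0}$, and this same proposition also covers Case II. By exhaustiveness, $\myvec{\mu} = \myvec{0}$ then forces us into Case III.

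For the backward direction I would unwind the piecewise construction of $\mymat{C}_{\rm ms}$ and $\mymat{\Theta}_{\rm ms}$ given in \cref{ss:Appendix_Zms_Cms_Thetams}. Case III stipulates $1 \notin \sigma(\mymat{\Lambda})$. Suppose, toward a contradiction, that $\myvec{\mu} \neq \myvec{0}$; then $\mymat{Z}$ falls under the first scenario there, where $\mymat{\Theta}_{\rm ms}$ is obtained from $\mymat{\Theta}$ by appending the row $\mathbf{1}_{n+1}^T$. This appended constant row makes $1$ a node of $\mymat{\Theta}_{\rm ms}$, i.e.\ $1 \in \sigma(\mymat{\Lambda}_{\rm ms})$, contradicting the defining condition of Case III. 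Hence $\myvec{\mu} = \myvec{0}$.

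I expect the only delicate step to be keeping the case bookkeeping airtight, specifically confirming that the combination $\myvec{\mu} \neq \myvec{0}$ with $1 \notin \sigma(\mymat{\Lambda})$ genuinely lands in the first scenario of \cref{ss:Appendix_Zms_Cms_Thetams} and in no other, so that the appended constant row is truly forced. All the quantitative content is already carried by \cref{prop:1_Ensures_NonZeroMean} and \cref{prop:LinearConsistency_eliminates_a_pesky_situation} together with the explicit definitions of $\mymat{\Theta}_{\rm ms}$; the remaining work is purely combinatorial matching against the partition in \cref{tab:4_possibilities_of_msub}.
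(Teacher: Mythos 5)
Your proposal is correct and takes essentially the same route as the paper: the forward direction by contraposition over Cases I, II, IV using \cref{prop:1_Ensures_NonZeroMean} and the contrapositive of \cref{prop:LinearConsistency_eliminates_a_pesky_situation}, and the backward direction by contradiction, observing that a non-zero mean in Case III would append the row \(\mathbf{1}_{n+1}^T\) to \(\mymat{\Theta}_{\rm ms}\) and hence force \(1 \in \sigma(\mymat{\Lambda}_{\rm ms})\). The only cosmetic difference is which proposition you assign to Case II (the paper uses \cref{prop:1_Ensures_NonZeroMean}, you use the contrapositive of \cref{prop:LinearConsistency_eliminates_a_pesky_situation}); both are valid since Case II satisfies the hypotheses of each.
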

	\paragraph{Proof}
	The four cases described in \cref{tab:4_possibilities_of_msub} are mutually exclusive and enumerate all the possible outcomes of mean subtraction.
	Hence, our objective boils down to establishing that the temporal mean, \(\myvec{\mu}\), is
	\begin{enumerate}
		\item non-zero in Cases I, II and IV.
		\item zero in Case III.
	\end{enumerate}
	We will proceed by contradiction for Cases I and III.
	The other two cases, II and IV, can be established directly.

	\mysubpara{Case I}
	According to \cref{tab:4_possibilities_of_msub}, we have \(1 \in \sigma(\mymat{\Theta}_{\rm ms})\) and \(1 \notin \KEvals\).
	We will show that the latter condition is contradicted if we have \(\myvec{\mu} = \myvec{0}\).
	
	Suppose the temporal mean is zero.
	Then, the additional condition of \(1 \notin \KEvals\) puts us in Case A of \cref{tab:Constructing__C_ms_Theta_ms}.
	Perusing the corresponding value of \(\mymat{\Theta}_{\rm ms}\), we see that it coincides with \(\mymat{\Theta}\) i.e., 
	\begin{displaymath}
		\myvec{\mu} ~=~\myvec{0} \implies \mymat{\Theta}_{\rm ms} = \mymat{\Theta}.
	\end{displaymath}
	Now, if we draw on the effect of mean-subtraction in Case I of \cref{tab:4_possibilities_of_msub}, we can see that \(1\) is a node of \(\mymat{\Theta}_{\rm ms}\) and, hence, also of \(\mymat{\Theta}\):
	\begin{displaymath}
		\mymat{\Theta}_{\rm ms} = \mymat{\Theta} \implies 1 \in \sigma(\mymat{\Theta}).
	\end{displaymath}
	However, the simultaneous application of \cref{eq:defn_Theta,def:Vandermonde_Nodes} tells us that the nodes of \(\mymat{\Theta}\) are the Koopman eigenvalues \(\KEvals\).
	Therefore, we have,
	\begin{displaymath}
		1 \in \sigma(\mymat{\Theta}) \implies 1 \in \KEvals.
	\end{displaymath}
	This contradicts the spectral property of Case I, as listed in \cref{tab:4_possibilities_of_msub}.
	
	\mysubpara{Cases II and IV}
	According to \cref{tab:4_possibilities_of_msub}, the property shared by Cases II and IV is that \(1 \) is a Koopman eigenvalue i.e., \(1 \in \KEvals \). 
	We will see that this property, in conjunction with the full column rank of \(\tilde{\mymat{C}}\), ensures a non-zero mean i.e., \(\myvec{\mu} \neq \myvec{0}. \)

	To begin with, note that our dictionary lies in the non-redundant span of \(\finitekissdim\) distinct Koopman eigenfunctions.
	So, we can use \cref{eq:KMF__of__Z} to expand the time-average \(\myvec{\mu}\), which is defined in \cref{eq:Defn_Temporal_Mean}, as follows:
	\begin{displaymath}
		\myvec{\mu} ~=~ \frac{1}{\undelayedDMDorder+1} \mymat{C} \mymat{\Theta} \myvec{1}_{\undelayedDMDorder+1}.
	\end{displaymath}
	Now, \(1 \in \KEvals\) ensures that \(\mymat{\Theta} \mathbf{1}_{n+1} \) has at least one non-zero coefficient and, hence, is non-zero:
	\begin{displaymath}
		1 \in \KEvals \implies \mymat{\Theta} \myvec{1}_{\undelayedDMDorder+1} \neq \myvec{0}.
	\end{displaymath}
	Furthermore, the spectrally informative initial condition \(\state_1\) lets the full column rank of \(\tilde{\mymat{C}}\) be inherited by \(\mymat{C}\).
	\begin{displaymath}
		\state_1~\textrm{is spectrally informative}~\implies \mymat{C}~\textrm{has full column rank.}
	\end{displaymath}
	Combining these two inferences, we get \(\myvec{\mu} \neq \myvec{0}\):
	\begin{displaymath}
		\mymat{C}~\textrm{has full column rank}~~\&~~\mymat{\Theta} \myvec{1}_{\undelayedDMDorder+1} \neq \myvec{0}~\implies \mymat{C} \mymat{\Theta} \myvec{1}_{\undelayedDMDorder+1} \neq \myvec{0} \implies \myvec{\mu} \neq \myvec{0}.
	\end{displaymath}
	
	\mysubpara{Case III}
	Here, \cref{tab:4_possibilities_of_msub} tells us that \(1 \notin \sigma(\mymat{\Theta}_{\rm ms})\) and \(1 \notin \KEvals\).
	We will show that the former is contradicted if the mean is non-zero.

	Suppose \(\myvec{\mu} \neq \myvec{0}\).
	Taken together with the condition \(1 \notin \KEvals\), we find ourselves in Case B of \cref{tab:Constructing__C_ms_Theta_ms}.
	Perusing the corresponding value of \(\mymat{\Theta}_{\rm ms} \), we see that \(1 \in \sigma(\mymat{\Theta}_{\rm ms})\), which is the desired contradiction.

	\manualQED
	
	Now, we proceed to connect the range of \(\mymat{C}_{\rm ms}\)  to those of \(\mymat{C}\) and \(\mymat{C}_{\not \owns 1}\).
	The definition of  \(\mymat{C}_{\rm ms}\) in \cref{eq:KMF__of__Z_ms} belies the intricacies of mean subtraction that are detailed in \cref{tab:4_possibilities_of_msub}. 
	Starting with the matrix \(\mymat{C}\), it is possible to gain (Case I) or lose (Case IV) a column.
	An existing column may be modified (Case II) or there may no effect whatsoever (Case III).
	Fortunately, we need only understand the relationship between the column spaces of \(\mymat{C}_{\rm ms},~\mymat{C}\) and their sub-matrices.
	\begin{proposition}\label{prop:C_without_static_mode}
		Suppose \PsiLiesInNRSpanOfrDistinctKEFs\, and \ICBeSpectrallyInformative.
		Then,	
		\begin{displaymath}
			\begin{aligned}
				\myvec{\mu} = \myvec{0} &\implies \mymat{C}_{\rm ms} = \mymat{C}. \\
				\myvec{\mu} \neq \myvec{0} &\implies \mathcal{R}(\mymat{C}_{\rm ms}) = \mathcal{R}(\mymat{C}_{\not \owns 1}) .
			\end{aligned}
		\end{displaymath}
		Additionally, if we define \((\mymat{C}_{\rm ms})_{\not \owns 1} \) to be the analogue of \(\mymat{C}_{\not \owns 1}\) for \(\mymat{C}_{\rm ms}\)\footnote{Specifically, \((\mymat{C}_{\rm ms})_{\not \owns 1} \) is taken to be the sub-matrix of \(\mymat{C}_{\rm ms}\) formed by excluding the column (if it exists) corresponding to the eigenvalue at 1}, we also get:
		\begin{displaymath}
			\mathcal{R}\left( (\mymat{C}_{\rm ms})_{\not \owns 1} \right) 
			= \mathcal{R}
			\left( 
			\mymat{C}_{\not \owns 1}  
			\right).
		\end{displaymath}
	\end{proposition}
	\begin{proof}
		Follows from the definitions of \(\mymat{C}\) \cref{eq:C_definition} and \(\mymat{C}_{\rm ms}\) (\cref{tab:Constructing__C_ms_Theta_ms}), with routine algebraic manipulations.
	\end{proof}

	\subsubsection{Proof of \cref{thm:DMDDFT_Oversampled}}
	We are given a dictionary \((\dictionary)\) that lies in the non-redundant span of \(\finitekissdim\) distinct KEFs and a spectrally informative initial condition \((\state_1)\).
	The resulting training data \((\mymat{X}, \mymat{Y})\) is linearly consistent and over-sampled.
	Under these conditions, we need to show that mean-subtracted DMD is not a temporal DFT.
	
	We can preface the justification by observing that over-sampling implies well-sampling.
	So, we can apply \cref{prop:WellPosed_Means_LinCon_KoopmanInvariance_R_Equivalent} to deduce that \(\tilde{\mymat{C}}\) has linearly independent columns.
	Now, we can proceed to a case-by-case analysis.

	\mypara{Case I}
	Here, \cref{tab:4_possibilities_of_msub} tells us that \(1\) is a node of \(\mymat{\Theta}_{\rm ms} \) i.e., \(1 \in \sigma(\mymat{\Theta}_{\rm ms})\).
	We will establish non-equivalence of \(\muDMD\) and DFT by contradiction.
	
	Suppose mean-subtracted DMD is equivalent to DFT.
	Then, the vector \(\myvec{1}_{\undelayedDMDorder}^H\) lies in the row space of \((\mymat{\Theta}_{\rm ms})_1\).
	This can be seen by first using \cref{thm:musubNDFT} to re-cast the equivalence of \(\muDMD\) and DFT as the following identity:
	\begin{displaymath}
		\mathcal{P}_{\mathcal{N}(\mymat{X}_{\rm ms})} \myvec{1}_n~=~\myvec{0}.
	\end{displaymath}
	Since \( \mathcal{N}(\mymat{X}_{\rm ms})^\perp = \mathcal{R}(\mymat{X}_{\rm ms}^{H})\), an equivalent statement would be:
	\begin{displaymath}
		\exists~ \myvec{d} ~\textrm{such that}~ \myvec{d}^{H}\mymat{X}_{\rm ms} = \mathbf{1}_n^{H}.
	\end{displaymath}
	Using \cref{eq:KMF__of__X_ms}, we then get:
	\begin{equation}\label{eq:DMD_DFT_Equivalence_via_rows_of_Xms}
		\exists~ \myvec{d} ~\textrm{such that}~ (\myvec{d}^{H}\mymat{C}_{\rm ms}) (\mymat{\Theta}_{\rm ms})_1 = \mathbf{1}_n^{H}.
	\end{equation}
	
	However, over-sampling endows \((\mymat{\Theta}_{\rm ms})_1 \) with linearly independent rows.
	According to \cref{tab:4_possibilities_of_msub,rem:Submatrices__of__Theta_ms}, the matrix \((\mymat{\Theta}_{\rm ms})_1\) has \(\finitekissdim+1\) rows and \(\undelayedDMDorder\) columns.
	Since over-sampling means \(\undelayedDMDorder \geq \finitekissdim+1\), we can apply \cref{eq:vandermonde_row_rank_lemma} to conclude that \((\mymat{\Theta}_{\rm ms})_1 \) has full row rank.
	
	Consequently, \(\myvec{d}^H \mymat{C}_{\rm ms} \) is the unique representation of \(\myvec{1}_n^H\) in the row space of \((\mymat{\Theta}_{\rm ms})_1 \).
	Taken together with the property that \(1\) is a node of \(\mymat{\Theta}_{\rm ms} \) and, hence, of \((\mymat{\Theta}_{\rm ms})_1 \), we see that \(\myvec{d}^H \mymat{C}_{\rm ms} \) must be zero at all indices except that corresponding to the node at 1 i.e.,
	\begin{displaymath}
		\myvec{d}^{H} (\mymat{C}_{\rm ms})_{\not \owns 1} = \myvec{0}.
	\end{displaymath}
	
	Now, the sequential application of \cref{lem:Zero_mu_iff_Case_III,prop:C_without_static_mode} yields a contradiction.
	Since we are not dealing with Case III, \cref{lem:Zero_mu_iff_Case_III} gives \(\myvec{\mu} \neq \myvec{0}\).
	Therefore, \cref{prop:C_without_static_mode} says,
	\begin{displaymath}
		\mathcal{R}(\mymat{C}_{\rm ms}) ~=~ \mathcal{R}(\mymat{C}_{\not \owns 1}) ~=~ \mathcal{R}\left( (\mymat{C}_{\rm ms})_{\not \owns 1} \right) .
	\end{displaymath}
	Consequently, we get,
	\begin{displaymath}
		\myvec{d}^H \mymat{C}_{\rm ms}~=~\myvec{0},
	\end{displaymath}
	which contradicts \cref{eq:DMD_DFT_Equivalence_via_rows_of_Xms}.

	\mypara{Case II}
	By \cref{tab:4_possibilities_of_msub}, the number \(1\) continues to be a node of \(\mymat{\Theta}_{\rm ms}\) i.e, \(1 \in \sigma(\mymat{\Theta}_{\rm ms})\).
	
	Non-equivalence of \(\muDMD\) and DFT can be established by re-using the same line of reasoning from Case I, for the most part.
	
	We need only update the arguments used to show \((\mymat{\Theta}_{\rm ms})_1 \) has full row rank.
	\Cref{tab:4_possibilities_of_msub,rem:Submatrices__of__Theta_ms} tell us that \((\mymat{\Theta}_{\rm ms})_1\) has \(\finitekissdim\) rows and \(\undelayedDMDorder\) columns.
	Even though \((\mymat{\Theta}_{\rm ms})_1 \) has one fewer row than in Case I, over-sampling means \(\undelayedDMDorder \geq \finitekissdim+1\), so there are still fewer rows than columns. 
	Consequently, \cref{eq:vandermonde_row_rank_lemma} tells us that \((\mymat{\Theta}_{\rm ms})_1\) has full row rank.
	
	\mypara{Case III}
	In this scenario, by \cref{tab:4_possibilities_of_msub}, \(1\) is not a node of \(\mymat{\Theta}_{\rm ms}\) i.e., \(1 \notin \sigma(\mymat{\Theta}_{\rm ms})\).
	We can establish the non-equivalence of \(\muDMD\) and DFT by contradiction.
	
	Suppose \(\muDMD\) is equivalent to DFT.
	Then, the Vandermonde matrix,
	\begin{displaymath}
		\mymat{\Theta}_{\rm aug} := 	
		\begin{bmatrix}
			(\mymat{\Theta}_{\rm ms})_1 \\ \mathbf{1}_n^{H}
		\end{bmatrix},
	\end{displaymath}
	has linearly dependent rows.
	To see this, we can retrace the first two steps from Case I - sequentially apply \cref{thm:musubNDFT} and the identity \( \mathcal{N}(\mymat{X}_{\rm ms})^\perp = \mathcal{R}(\mymat{X}_{\rm ms}^{H})\) - to get the following assertion:
	\begin{displaymath}
		\exists~ \myvec{d} ~\textrm{such that}~ \myvec{d}^{H}\mymat{X}_{\rm ms} = \mathbf{1}_n^{H}.
	\end{displaymath}
	Shifting \(\myvec{1}_n^H\) to the left and using \cref{eq:KMF__of__X_ms}, we obtain an equivalent statement:
	\begin{displaymath}
		\exists ~\myvec{d} ~\textrm{such that}~ 
		\begin{bmatrix}
			\myvec{d}^{H} \mymat{C}_{\rm ms} & -1
		\end{bmatrix}
		\begin{bmatrix}
			(\mymat{\Theta}_{\rm ms})_1 \\ \mathbf{1}_n^{H}
		\end{bmatrix} = \myvec{0}.
	\end{displaymath}
	Hence, the rows of \( \mymat{\Theta}_{\rm aug} \) are linearly dependent.

	However, this deduction can be contradicted by drawing on the condition of over-sampling.
	Firstly, \( 1 \notin \sigma(\mymat{\Theta}_{\rm ms})\) ensures that  \(\mymat{\Theta}_{\rm aug}\) has distinct nodes.
	Now, \cref{tab:4_possibilities_of_msub} tells us \(\mymat{\Theta}_{\rm aug}\) has \(r+1\) rows and \(\undelayedDMDorder\) columns.
	Additionally, the condition of over-sampling (\(\undelayedDMDorder \geq \finitekissdim+1\)) ensures that there are at-least as many columns as rows.
	Hence, we can draw on \cref{eq:vandermonde_row_rank_lemma} to infer that \(\mymat{\Theta}_{\rm aug}\) has full row rank which is a contradiction.
	
	\mypara{Case IV}
	According to \cref{tab:4_possibilities_of_msub}, the number \(1\) continues to be excluded from the nodes of \(\mymat{\Theta}_{\rm ms}\) i.e., \(1 \notin \sigma(\mymat{\Theta}_{\rm ms})\).
	
	Non-equivalence of \(\muDMD\) and DFT can be shown using the same arguments from Case III, with minor updates regarding the size of \(\mymat{\Theta}_{\rm aug}\).
	Specifically, by \cref{tab:4_possibilities_of_msub}, the matrix \(\mymat{\Theta}_{\rm aug}\) will now have \(r\) rows and \(\undelayedDMDorder\) columns.
	Despite having one fewer row than in Case III, over-sampling (\(\undelayedDMDorder \geq \finitekissdim+1\)) ensures that the columns of \(\mymat{\Theta}_{\rm aug}\) continue to outnumber the rows.
	Hence, \cref{eq:vandermonde_row_rank_lemma} tells us that \(\mymat{\Theta}_{\rm aug}\) has full row rank.
	
	\manualQED
	
	\begin{remark}\label{rem:Generality__OversamplingProof}
		In the above proof of \cref{thm:DMDDFT_Oversampled}, Cases II and IV can also be established under the weaker condition of well-sampling (\(\undelayedDMDorder \geq \finitekissdim\)).
		We will leverage this subsequently, to prove \cref{cor:DMDFT_Undersampled}, in Case II of the just-sampled regime (\(\undelayedDMDorder = \finitekissdim\)).
	\end{remark}

	\subsection{Exploring the under and just-sampled regimes}\label{ss:Appendix_Proof_Under_Just_Sampled}
	\Cref{lem:ChenSufficiency4c_ms} explains the under-sampled regime. 
	Alas, the just-sampled scenario does not afford such a simple explanation.
	To substantiate this, we begin by formalizing an intermediary of import.
	\begin{proposition}\label{prop:LinearConsistency_Restricts_KernelOfCms}
		Suppose \PsiLiesInNRSpanOfrDistinctKEFs\, and \ICBeSpectrallyInformative.
		
		If the columns of \(\tilde{\mymat{C}}\) are linearly independent and \(1\) is a node of \(\mymat{\Theta}_{\rm ms}\), then, \(\mathcal{N}(\mymat{C}_{\rm ms})\) is the one-dimensional subspace spanned by \(\mymat{\Theta}_{\rm ms} \mathbf{1}_{n+1}\).
		\begin{displaymath}
			\tilde{\mymat{C}}~\textrm{has full column rank}~\&~1 \in \sigma(\mymat{\Theta}_{\rm ms}) \implies \mathcal{N}(\mymat{C}_{\rm ms})~=~\mathcal{R}(\mymat{\Theta}_{\rm ms} \mathbf{1}_{n+1}).
		\end{displaymath}
	\end{proposition}
	
	\paragraph{Proof}
	We begin by showing that \(\mathcal{N}(\mymat{C}_{\rm ms})\) contains \(\mathcal{R}(\mymat{\Theta}_{\rm ms} \mathbf{1}_{n+1})\). 
	Consider \(\mymat{Z}_{\rm ms}\) as defined in \cref{eq:Define__Z_ms}.
	Computing the average of its columns and applying \cref{eq:Defn_Temporal_Mean} gives:
	\begin{displaymath}
		\mymat{Z}_{\rm ms} \frac{\mathbf{1}_{n+1}}{n+1} = \myvec{0}.
	\end{displaymath}
	Expanding \(\mymat{Z}_{\rm ms}\) using \cref{eq:KMF__of__Z_ms} and invoking the definition of \(\mathcal{N}(\mymat{C}_{\rm ms})\), we find
	\begin{displaymath}
		\EmphasiseReduction{\mymat{Z}_{\rm ms}} \frac{\mathbf{1}_{n+1}}{n+1} = \myvec{0} 
		~\iff~ 
		\mymat{C}_{\rm ms} \EmphasiseReduction{\mymat{\Theta}_{\rm ms} \mathbf{1}_{n+1}} = \EmphasiseReduction{\myvec{0}} 
		~\implies~
		\mathcal{N}(\mymat{C}_{\rm ms}) \supseteq \mathcal{R}(\mymat{\Theta}_{\rm ms} \mathbf{1}_{n+1}).
	\end{displaymath}
	
	Now, we need to show that \(\mathcal{N}(\mymat{C}_{\rm ms})\) is in-turn subsumed by \(\mathcal{R}(\mymat{\Theta}_{\rm ms}  \myvec{1}_{\undelayedDMDorder+1})\).
	
	This is equivalent to showing that \(\mathcal{N}(\mymat{C}_{\rm ms})\) is a one dimensional subspace.
	%%%
	Such an equivalence arises from \(1\) being a node of \(\mymat{\Theta}_{\rm ms}\), a property that ensures	 \( \mymat{\Theta}_{\rm ms} \myvec{1}_{n+1} ~\neq~ \myvec{0} \). 
	Hence, \( \mathcal{R}(\mymat{\Theta}_{\rm ms} \myvec{1}_{n+1}) \) is  a non-trivial subspace of dimension 1. 
	Since \(\mathcal{N}(\mymat{C}_{\rm ms}) \supseteq \mathcal{R}(\mymat{\Theta}_{\rm ms} \mathbf{1}_{n+1})\), we need only show that \(\mathcal{N}(\mymat{C}_{\rm ms})\) is one-dimensional to turn the set containment into an equality.
	
	Hence, we quantify the dimension of \(\mathcal{N}(\mymat{C}_{\rm ms})\), through the more tractable \(\mathcal{R}(\mymat{C}_{\not \owns 1}) \).
	%%%
	Once again, \(1\) being a node of \(\mymat{\Theta}_{\rm ms}\) means we are either in Case I or II of \cref{tab:4_possibilities_of_msub}.
	This lets us apply \cref{lem:Zero_mu_iff_Case_III} to infer that \(\myvec{\mu} \neq \myvec{0} \).
	Consequently, we can apply \cref{prop:C_without_static_mode} to get \(\mathcal{R}(\mymat{C}_{\rm ms}) = \mathcal{R}(\mymat{C}_{\not \owns 1} )\). 
	By the rank-nullity theorem, we then obtain:
	\begin{displaymath}
		\dim(\mathcal{N}(\mymat{C}_{\rm ms} ))~=~\textrm{Number of columns in}~\mymat{C}_{\rm ms}~-~ \dim(\mathcal{R}(\mymat{C}_{\not \owns 1})  ).
	\end{displaymath}

	Since \(\dim(\mathcal{R}(\mymat{C}_{\not \owns 1})  )\) is always one less than the number of columns in \(\mymat{C}_{\rm ms}\), we have the desired conclusion.
	We can compute \(\dim(\mathcal{R}(\mymat{C}_{\not \owns 1})  )\) using \cref{tab:4_possibilities_of_msub}.
	Since \(1 \in \sigma(\mymat{\Theta}_{\rm ms})\), we need only consider Cases I and II.
	We preface these considerations by noting that the spectrally informative initial condition ensures \(\mymat{C}\) inherits, from \(\tilde{\mymat{C}}\), a full column rank of \(\finitekissdim\).
	% CAN be done as is without drawing on the first table.
	\mysubpara{Case I}
	Here, \(1\) is not a Koopman eigenvalue i.e., \(1 \notin \KEvals \) and the Vandermonde matrix \(\mymat{\Theta}_{\rm ms}\) has \(r+1\) rows. 
	
	According to \cref{eq:KMF__of__Z_ms}, \(\mymat{\Theta}_{\rm ms}\) can be pre-multiplied by \(\mymat{C}_{\rm ms}\).
	So, \(\mymat{C}_{\rm ms}\) must have \(\finitekissdim+1\) columns. 
	Since \(1 \notin \KEvals\),  \cref{defn:msub__illustrating__partition} says \(\mymat{C}_{\not \owns 1}\) coincides with \(\mymat{C}\) which has a full column rank of \(\finitekissdim\). 
	Hence,
	\begin{displaymath}
		\dim(\mathcal{N}(\mymat{C}_{\rm ms} ))~=~(r+1) - (r)~=~1.
	\end{displaymath}
	\mysubpara{Case II} 
	In this scenario, \(1\) is a Koopman eigenvalue i.e., \(1 \in \KEvals \) and \(\mymat{\Theta}_{\rm ms}\) has \(r\) rows.
	
	As before, we can draw upon \cref{eq:KMF__of__Z_ms} to deduce that \(\mymat{C}_{\rm ms}\) has \(r\) columns.
	Since \(1 \in \KEvals\), \cref{defn:msub__illustrating__partition} says that \(\mymat{C}_{\not \owns 1}\) is obtained by excluding one of the \(\finitekissdim\) columns in \(\mymat{C}\).
	Hence, the full column rank of \(\mymat{C}\)  translates to \(\mymat{C}_{\not \owns 1}\) possessing \(r-1\) linearly independent columns. 
	Therefore, we have:
	\begin{displaymath}
		\dim(\mathcal{N}(\mymat{C}_{\rm ms} ))~=~(r) - (r-1)~=~1.
	\end{displaymath}

	\manualQED
	
	%-------------------\label{cor:DMDFT_Undersampled}------------------
	
	\Cref{cor:DMDFT_Undersampled} is now amenable to reasoning:
	
	\subsubsection{Proof of \cref{cor:DMDFT_Undersampled}}

	There are two distinct segments to this proof, corresponding to the under-sampled (\(n < r\)) and the just-sampled (\(n=r\)) regimes. 
	We preface the discussion by highlighting a consequence of \cref{prop:ManyFacesofLinCon} and the initial condition \(\state_1\) being spectrally informative - the matrices \(\tilde{\mymat{C}}\) and \(\mymat{C}\) both have full column rank. 
	
	\mypara{Under-sampling \((\undelayedDMDorder < \finitekissdim)\)}
	
	In this regime, we will establish the equivalence of \(\muDMD\) and DFT using \cref{lem:ChenSufficiency4c_ms}.
	So, we will focus on showing that \(\mymat{X}_{\rm ms}\) has full column rank. 
	
	The pertinent analysis splits according to the efficacy of mean-subtraction, as outlined in the second column of \cref{tab:4_possibilities_of_msub}.
	
	\mysubpara{Cases I and II}
	According to \cref{tab:4_possibilities_of_msub}, \(1\) is a node of the Vandermonde matrix \(\mymat{\Theta}_{\rm ms}\) i.e., \(1 \in \sigma(\mymat{\Theta}_{\rm ms})\).

	Proof by contradiction.
	Suppose \(\mymat{X}_{\rm ms}\) does not have full column rank i.e.,
	\begin{displaymath}
		\exists~ \myvec{z} \neq \myvec{0} ~\textrm{such that}~\mymat{X}_{\rm ms} \myvec{z} = \myvec{0}.
	\end{displaymath}
	
	Then, the matrix \(\mymat{\Theta}_{\rm ms}\) has a non-trivial nullspace.
	To see this, we first use \cref{eq:KMF__of__X_ms} to expand \(\mymat{X}_{\rm ms}\) thus:
	\begin{displaymath}
		\exists~ \myvec{z} \neq \myvec{0} ~\textrm{such that}~
		\mymat{C}_{\rm ms} (\mymat{\Theta}_{\rm ms})_1 \myvec{z} = \myvec{0}.
	\end{displaymath}
	By \cref{prop:LinearConsistency_Restricts_KernelOfCms}, we know \(\mathcal{N}(\mymat{C}_{\rm ms}) = \mathcal{R}(\mymat{\Theta}_{\rm ms} \myvec{1}_{n+1})\). 
	Hence, there exists a non-zero scalar \(\beta\) such that:
	\begin{displaymath}
		(\mymat{\Theta}_{\rm ms})_1 \myvec{z} = \beta \mymat{\Theta}_{\rm ms} \mathbf{1}_{n+1}~\iff~\mymat{\Theta}_{\rm ms} 
		\begin{bmatrix}
			\myvec{z} - \beta \mathbf{1}_n \\
			-\beta
		\end{bmatrix}
		= \myvec{0}.
	\end{displaymath}
	
	However, under-sampling ensures that \(\mymat{\Theta}_{\rm ms}\) has full column rank, which is a contradiction.
	This can be deduced from \cref{tab:4_possibilities_of_msub} which says \(\mymat{\Theta}_{\rm ms}\) has \(\undelayedDMDorder+1\) columns and \(\finitekissdim + 1\) (\(\finitekissdim\)) rows in Case I (Case II).
	Since under-sampling means \(\undelayedDMDorder+1 \leq \finitekissdim\), there are at least as many rows in \(\mymat{\Theta}_{\rm ms}\) as columns.
	So, we can apply \cref{eq:vandermonde_column_rank_lemma} to deduce that \(\mymat{\Theta}_{\rm ms}\) has full column rank.
	
	Thus, \(\mymat{X}_{\rm ms}\) must have full column rank.
	
	\mysubpara{Cases III and IV }
	Now, the number \(1\) is no longer a node of \(\mymat{\Theta}_{\rm ms}\) i.e., \(1 \notin \sigma(\mymat{\Theta}_{\rm ms})\).
	
	The matrix \(\mymat{C}_{\rm ms}\) has full column rank.
	According to \cref{tab:4_possibilities_of_msub}, the matrix \(\mymat{C}_{\rm ms}\) is \(\mymat{C}\) (Case III) or \(\mymat{C}_{\not \owns 1}\) (Case IV).
	Since both matrices have full column rank, \(\mymat{C}_{\rm ms}\) will also have linearly independent columns.
	
	Similarly, the matrix \((\mymat{\Theta}_{\rm ms})_1\) also has full column rank.
	To begin with, we have just seen that \(\mymat{C}_{\rm ms}\) possesses at least \(\finitekissdim-1\) columns.
	So, by \cref{eq:KMF__of__X_ms}, \( (\mymat{\Theta}_{\rm ms})_1\) has at least \(\finitekissdim-1\) rows.
	Since it has \(\undelayedDMDorder\) columns and under-sampling means \(\undelayedDMDorder \leq \finitekissdim-1\), \cref{eq:vandermonde_column_rank_lemma} says that \( (\mymat{\Theta}_{\rm ms})_1\) has full column rank.
	
	Therefore, \(\mymat{X}_{\rm ms}\), being the product of these two matrices, inherits linearly independent columns.
	
	\mypara{Just-sampling \((\undelayedDMDorder~=~\finitekissdim)\)} 
	
	Just-sampling necessitates a case-by-case handling of the scenarios described in \cref{tab:4_possibilities_of_msub}.
	
	\mysubpara{Case I}
	Here, \(1\) is a node of the Vandermonde matrix \(\mymat{\Theta}_{\rm ms}\) but not a Koopman eigenvalue.
	
	The latter means that we need to show  \(\muDMD\) is equivalent to a DFT.
	
	To this end, we can adopt the same line of reasoning used in the under-sampled regime, with appropriate modifications in showing that \(\mymat{\Theta}_{\rm ms}\) has full column rank.
	By \cref{eq:Define__Z_ms,eq:KMF__of__Z_ms}, \(\mymat{\Theta}_{\rm ms}\)  has \(\undelayedDMDorder+1\) columns.
	Additionally, this being Case I, \cref{tab:4_possibilities_of_msub} tells us that \(\mymat{\Theta}_{\rm ms}\)  has \(\finitekissdim+1\) distinct nodes, which translates to \(\finitekissdim+1\) rows.
	Since just-sampling means \(\undelayedDMDorder ~=~ \finitekissdim\), \(\mymat{\Theta}_{\rm ms}\) still has at-least as many rows as columns.
	So, we can continue to rely on \cref{eq:vandermonde_column_rank_lemma} to deduce that \(\mymat{\Theta}_{\rm ms}\) has linearly independent columns.
	
	\mysubpara{Case II} 
	According to \cref{tab:4_possibilities_of_msub}, \(1\) is a node of \(\mymat{\Theta}_{\rm ms}\) and is also a Koopman eigenvalue.
	
	So, we need to establish non-equivalence of \(\muDMD\) and DFT.
	
	This can be achieved by drawing on \cref{thm:DMDDFT_Oversampled}, even though we are not in the over-sampled regime.
	Firstly, note that all the conditions required by \cref{thm:DMDDFT_Oversampled}, except for over-sampling, are met.
	We have a dictionary in the non-redundant span of \(\finitekissdim\) distinct KEFs and a spectrally informative initial condition \(\state_1\).
	Furthermore, the \hyperref[defn:Koopman_invariant_psi]{Koopman invariance of \(\dictionary\)}  ensures that \((\mymat{X}, \mymat{Y})\) is linearly consistent.
	Now, \cref{rem:Generality__OversamplingProof} tells us that here in Case II, \cref{thm:DMDDFT_Oversampled} holds under the weaker condition of well-sampling (\(\undelayedDMDorder \geq \finitekissdim\)).
	Since just-sampling (\(\undelayedDMDorder = \finitekissdim\)) implies well-sampling, we can apply \cref{thm:DMDDFT_Oversampled} to infer non-equivalence of \(\muDMD\) and DFT.

	\mysubpara{Case III}
	According to \cref{tab:4_possibilities_of_msub}, \(1\) is neither a node of \(\mymat{\Theta}_{\rm ms}\) nor a Koopman eigenvalue.
	
	So, we need to establish the equivalence of \(\muDMD\) and DFT.
	
	Once again, we can re-use the reasoning from Case III in the under-sampled regime, with relevant modifications to show that \((\mymat{\Theta}_{\rm ms})_1\) has full column rank.
	Firstly, by \cref{tab:4_possibilities_of_msub}, we have \(\mymat{C}_{\rm ms} ~=~ \mymat{C}\).
	This means that \(\mymat{C}_{\rm ms}\) has \(r\) columns.
	So, by \cref{eq:Define__Z_ms,eq:KMF__of__X_ms,eq:Define__X_ms},  \((\mymat{\Theta}_{\rm ms})_1\) has \(r\) rows and \(\undelayedDMDorder\) columns.
	Since just-sampling means \(\undelayedDMDorder~=~\finitekissdim\), the matrix \((\mymat{\Theta}_{\rm ms})_1\) continues to possess at-least as many rows as columns.
	Consequently, \cref{eq:vandermonde_column_rank_lemma} tells us that \((\mymat{\Theta}_{\rm ms})_1\) has full column rank.

	\mysubpara{Case IV}
	Here, \cref{tab:4_possibilities_of_msub} tells us that \(1\) is not a node of \(\mymat{\Theta}_{\rm ms}\), but is a Koopman eigenvalue.
	
	So, we need to prove that  \(\muDMD\) is not equivalent to a DFT.
	
	This can be achieved using  \cref{thm:DMDDFT_Oversampled}, despite only being just-sampled, with an appropriate ``change of coordinates''.
	In particular, the similarity of \cref{eq:KMF__of__Z_ms} to \cref{eq:KMF__of__Z} lets us view the columns of \(\mymat{Z}_{\rm ms} \) as snapshots of the same dynamical system along the same trajectory starting at \(\state_1\), but generated using a  different set of observables. 
	The new dictionary lies in the non-redundant span of the \(\finitekissdim-1\) KEFs that correspond to the eigen-values \(\KEvals \cap \{1\}^c\). 
	The initial condition \(\state_1\) continues to be spectrally informative for this modified collection of observables.
	Furthermore, note that 
	\begin{itemize}
		\item The matrix \(\mymat{C}_{\rm ms} = \mymat{C}_{\not \owns 1}\) has full column rank. 
		So, by \cref{prop:ManyFacesofLinCon}, the new dictionary is also Koopman invariant- a property that, by definition, guarantees the pertinent inputs to DMD are linearly consistent.
		\item The number of columns in \(\mymat{X}_{\rm ms}\) (\(r\)) is at least 1 more than the number of distinct KEFs (\(r-1\)) whose non-redundant span contains the new observables.
	\end{itemize}
	Applying \cref{thm:DMDDFT_Oversampled}, we find that evaluating \cref{alg:MeanSubtractedDMD} using \(\mymat{Z}_{\rm ms}\) as the input is not equivalent to performing a temporal DFT.
	However, the mean-subtracted time series \(\mymat{Z}_{\rm ms}\) has zero temporal mean.
	So, the output of \cref{alg:MeanSubtractedDMD} remains the same regardless of whether we use \(\mymat{Z}\) or \(\mymat{Z}_{\rm ms}\) as the input.
	Therefore, when we use \(\mymat{Z}\) as the input to \cref{alg:MeanSubtractedDMD} i.e., when we perform \(\muDMD\), it is not equivalent to a DFT.
	
	\manualQED
	
	\subsection{Delay embedding can generate a Koopman invariant dictionary}\label{ss:TimeDelays_Gib_KoopmanInvariance}
	Taking time delays is a simple and pragmatic way to produce a Koopman invariant dictionary from \(\dictionary\) (\cref{prop:DelaysMakeGoodObservables}).
	To substantiate this claim, consider the matrices \(\tilde{\mymat{C}}_{\rm d-delayed}\) and \(\mymat{C}_{\rm d-delayed}\) defined as the delay-embedded counterparts of \( \tilde{\mymat{C}} \) and \(\mymat{C}\) respectively.
	\begin{equation}\label{eq:Define__Ctilde_d_delayed}
		\tilde{\mymat{C}}_{\rm d-delayed} := 
		\begin{bmatrix}
			\tilde{\mymat{C}} \\ \tilde{\mymat{C}} \mymat{\Lambda} \\ \vdots \\ \tilde{\mymat{C}} \mymat{\Lambda}^{d}
		\end{bmatrix},~
		\mymat{C}_{\rm d-delayed} := 
		\begin{bmatrix}
			\mymat{C} \\ \mymat{C} \mymat{\Lambda} \\ \vdots \\ \mymat{C} \mymat{\Lambda}^{d}
		\end{bmatrix}.
	\end{equation}
	Using the definition of \(\mymat{\Theta}_d \) from \cref{eq:Vandermonde_and_submatrices} in \cref{eq:Define__Z_d_delayed}, we find:
	\begin{displaymath}
		\mymat{Z}_{\rm d-delayed}  = \mymat{C}_{\rm d-delayed} \mymat{\Theta}_d.
	\end{displaymath}
	Moreover, \cref{eq:C_definition} tells us how to connect \(\mymat{C}_{\rm d-delayed}\) and  \(\tilde{\mymat{C}}_{\rm d-delayed}\):
	\begin{displaymath}
		\mymat{C}_{\rm d-delayed} 
		= \tilde{\mymat{C}}_{\rm d-delayed} 
		\diag[\KEFVector(\state_1)].
	\end{displaymath}
	The preceding arguments, taken in light of the derivation in \Cref{ss:KMD_Compact} that gives \cref{eq:KMF__of__Z}, suggest that \(\tilde{\mymat{C}}_{\rm d-delayed}\) is the representation of \(\dictionary_{\rm d-delayed} \) in the eigen-basis \(\KEFVector \):
	\begin{displaymath}
		\dictionary_{\rm d-delayed} = \tilde{\mymat{C}}_{\rm d-delayed} \KEFVector.
	\end{displaymath}
	With this machinery in place, a classical argument from systems theory, reviewed here as \cref{lem:EVec__Test__For__Observability}, shows that \(r-1\) delays are sufficient for \(\dictionary_{\rm d-delayed}\) to be Koopman invariant.
	\begin{lemma}[Eigenvector test for Observability \cite{hespanha2018linear}]\label{lem:EVec__Test__For__Observability}
		Let \(\mymat{A}\) and \(\mymat{C}\) denote two matrices belonging to \(\mathbb{C}^{\finitekissdim \times \finitekissdim}\) and \( \mathbb{C}^{\dictionarylength \times \finitekissdim}\) respectively.
		Consider their observability matrix, which is defined thus:
		\begin{displaymath}
			\obs(\mymat{A}, \mymat{C}) ~:=~
			\begin{bmatrix}
				\mymat{C} \\
				\mymat{C} \mymat{A} \\
				\vdots \\
				\mymat{C} \mymat{A}^{\finitekissdim-1}
			\end{bmatrix}.
		\end{displaymath}
		If no eigenvector of \(\mymat{A}\) lies in the null-space of \(\mymat{C}\), then, the columns of \(\obs(\mymat{A}, \mymat{C})\) are linearly independent.
		\begin{displaymath}
			\{\myvec{v} \neq \myvec{0}~|~\exists \,\lambda ~\textrm{such that}~\mymat{A} \myvec{v} \,=\,\lambda \myvec{v} \} \,\cap\, \mathcal{N}(\mymat{C}) 
			= \{ \}
			\implies
			\obs(\mymat{A}, \mymat{C})~\textrm{has full column rank.}
		\end{displaymath}
		
	\end{lemma}
	\begin{proof}[Proof of \cref{prop:DelaysMakeGoodObservables}]
		According to \cref{prop:ManyFacesofLinCon}, Koopman invariance of \(\dictionary_{\rm d-delayed}\) can be established by showing that the columns of \(\tilde{\mymat{C}}_{\rm d-delayed}\) are linearly independent.
		To begin with, \(\dictionary\) lies in the non-redundant span of \(\finitekissdim\) distinct KEFs.
		So, \cref{eq:Every_column_in_CTilde_is_NonZero} says every column of \(\tilde{\mymat{C}}\) is non-zero.
		Furthermore, \cref{eq:Define__Ctilde_d_delayed} tells us that \(\tilde{\mymat{C}}\) comprises the first \(\dictionarylength\) rows of \(\tilde{\mymat{C}}_{\rm d-delayed}\).
		Hence, every column of \(\tilde{\mymat{C}}_{\rm d-delayed}\) is also non-zero, which means \(\dictionary_{\rm d-delayed}\) also lies in the non-redundant span of the same \(\finitekissdim\) distinct KEFs.
		Consequently, we can apply \cref{prop:ManyFacesofLinCon} to infer that Koopman invariance of \(\dictionary_{\rm d-delayed}\) is equivalent to \(\tilde{\mymat{C}}_{\rm d-delayed}\) possessing full column rank.
		
		Now, \(\fundelayscount \geq \finitekissdim-1\) means the rows of		
		\(\tilde{\mymat{C}}_{\rm d-delayed}\) are a superset of the rows of \(\obs(\mymat{\Lambda},\tilde{\mymat{C}})\).
		
		Additionally, \cref{lem:EVec__Test__For__Observability}, applied to the matrix pair \((\mymat{\Lambda}, \tilde{\mymat{C}})\), tells us that \(\obs(\mymat{\Lambda},\tilde{\mymat{C}})\) has full column rank.
		To see this, note that the eigenvectors of \(\mymat{\Lambda}\) are the standard basis vectors.
		Furthermore, we have already seen that no column of \(\tilde{\mymat{C}}\) is \(\myvec{0}\). 
		In other words, no eigenvector of \(\mymat{\Lambda}\) lies in the null-space of \(\tilde{\mymat{C}}\).
		Hence, \cref{lem:EVec__Test__For__Observability} tells us that the columns of \(\obs(\mymat{\Lambda},\tilde{\mymat{C}})\) are linearly independent.
		
		Therefore, \(\tilde{\mymat{C}}_{\rm d-delayed}\) also has full column rank.
		
	\end{proof}

	\section{Impact of low-rank regularization on the computation of \DistanceToDFT}\label{s:DisrepancyChecks}
	
	Recall, from \cref{rem:Only_computing_an_upper_bound}, that the discrepancy between \((\mymat{X}_{\rm d-delayed})_{\rm ms} \) and its low rank approximation \((\dummy{\mymat{X}}_{\rm d-delayed})_{\rm ms} \) determines whether we compute \DistanceToDFT~ or just its upper bound.
	Specifically, \DistanceToDFT~ can be computed if and only if said discrepancy is low.
	In practice, we let \(\sigma_{\rm Tail}\) denote the norm of the discrepancy/approximation error i.e.,
	\begin{displaymath}
		\sigma_{\rm Tail} ~:=~ \left\lVert (\mymat{X}_{\rm d-delayed})_{\rm ms}  - (\dummy{\mymat{X}}_{\rm d-delayed})_{\rm ms} \right \rVert,
	\end{displaymath}
	and deem the discrepancy to be low when \(\sigma_{\rm Tail} \leq 10^{-15}\).
	
	So, for each system considered in \Cref{s:Numerics}, we now scrutinize the variation of \(\sigma_{\rm Tail}\) with model order \((\theta)\) and number of time delays \((\fundelayscount)\).
	
	For all three LTI systems, \Cref{fig:sigma_tail__knownss} tells us there is low discrepancy until we are well into the over-sampled regime (\(\theta \geq 14\)), thereby validating that \cref{fig:cms_transit_knownss} indeed depicts \DistanceToDFT~ when \(\theta < 14\).
	Specifically, when we take only \(3\) or \(6\) time delays, \(\sigma_{\rm Tail}\) is negligible throughout \cref{fig:sigma_tail__knownss} and, hence, the concomitant trends in \cref{fig:cms_transit_knownss} are indeed those of \DistanceToDFT.
	However, when we excessively take \(24\) time delays, \(\sigma_{\rm Tail}\) becomes non-trivial from \(\theta~=~14,16~\textrm{and}~21\) for \(\lti_{1a},\lti_{1b}~\textrm{and}~\lti_{3}\) respectively.
	This departure from low discrepancy is conveniently reflected in \cref{fig:cms_transit_knownss} as secondary spikes.
	Therefore, the visualizations in \cref{fig:cms_transit_knownss} corresponding to \(d=24\) depict \DistanceToDFT~ only until said secondary spikes, beyond which they represent just an upper bound on \DistanceToDFT.
	\begin{figure}[]
		\centering
		\subfloat[\(\lti_{1a}\)]{\includegraphics[width = 0.71 \linewidth]{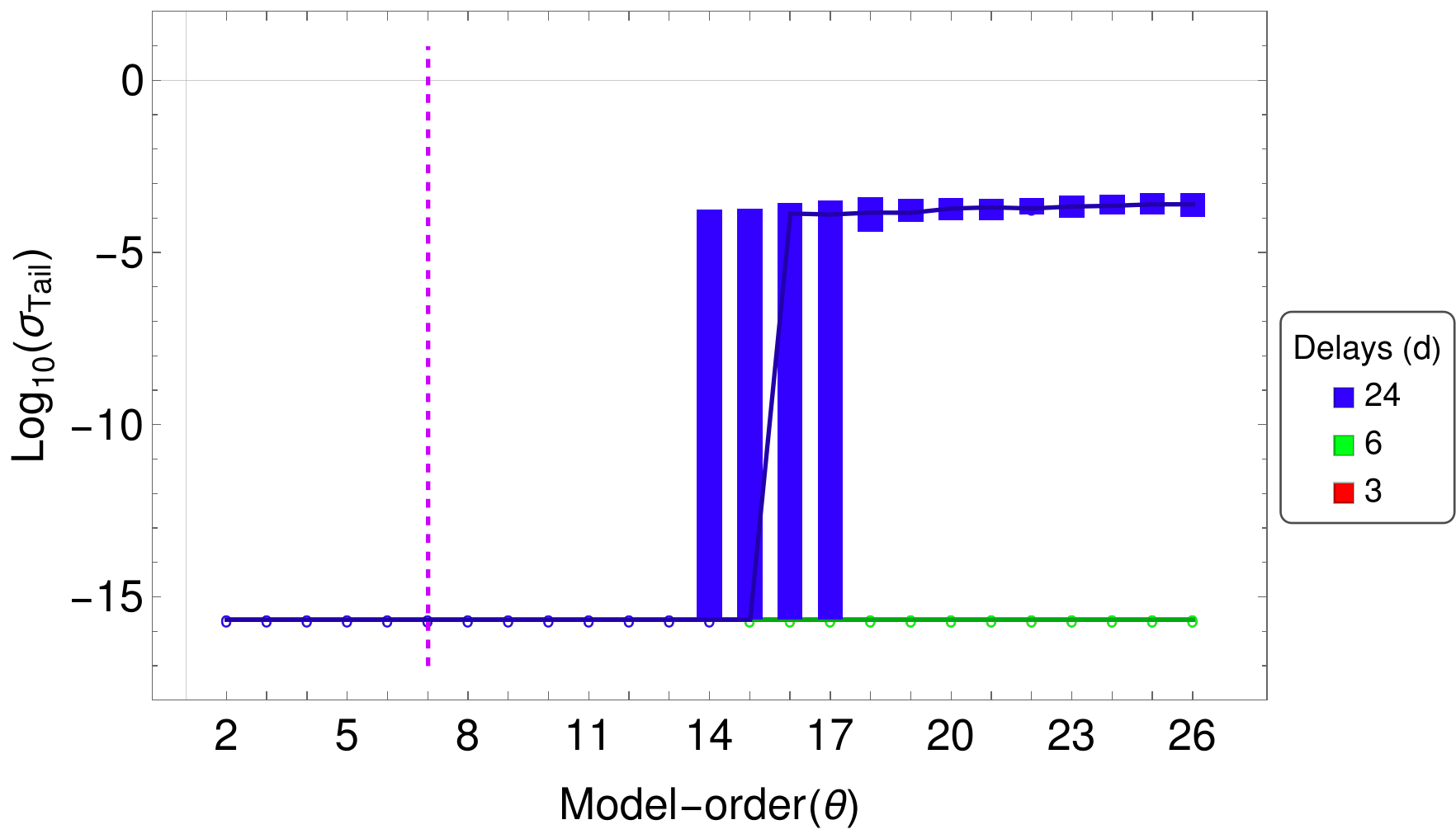}} \\
		\subfloat[\(\lti_{1b}\)]{\includegraphics[width = 0.71 \linewidth]{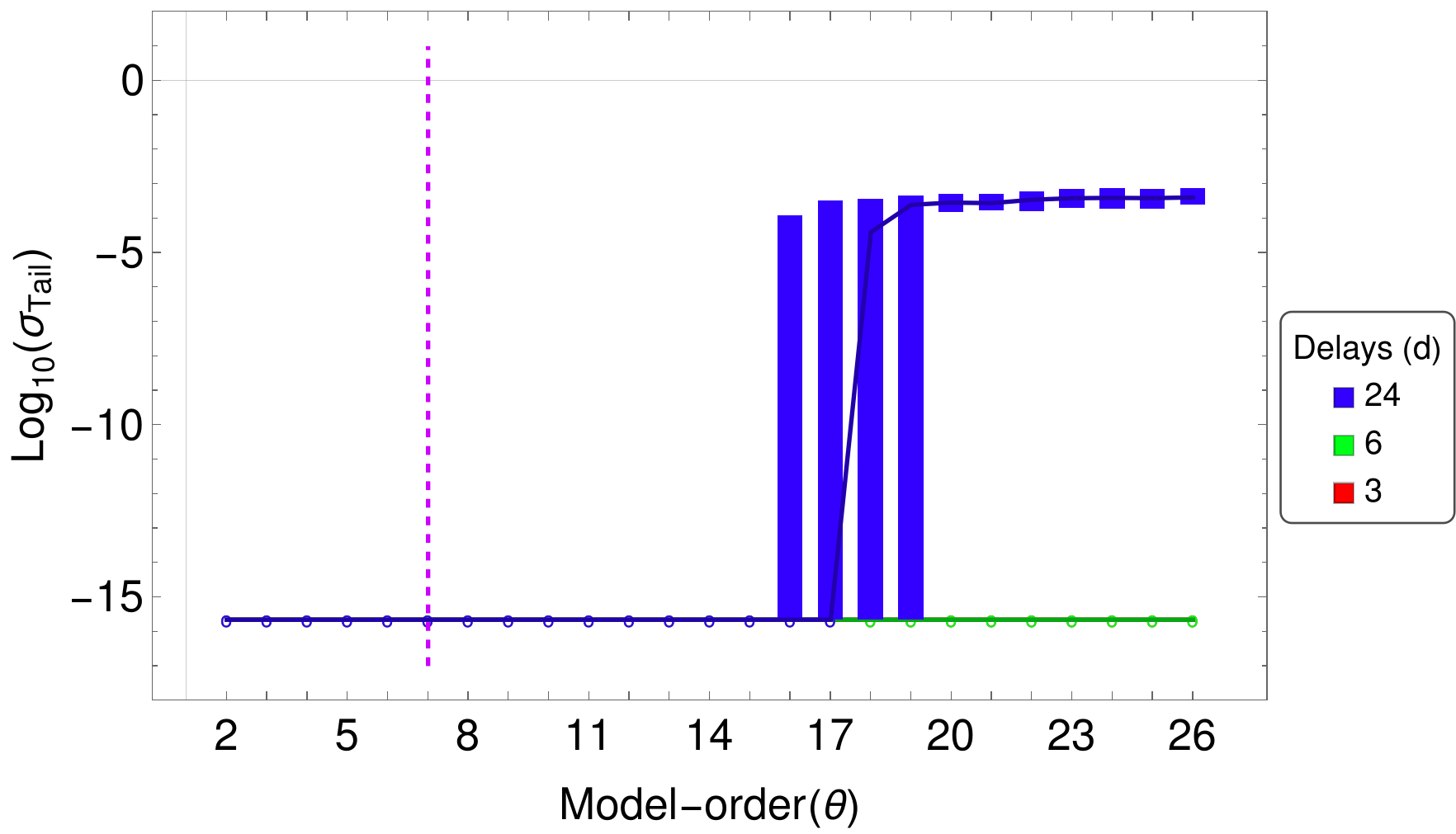}} \\
		\subfloat[\(\lti_{3}\)]{\includegraphics[width = 0.71 \linewidth]{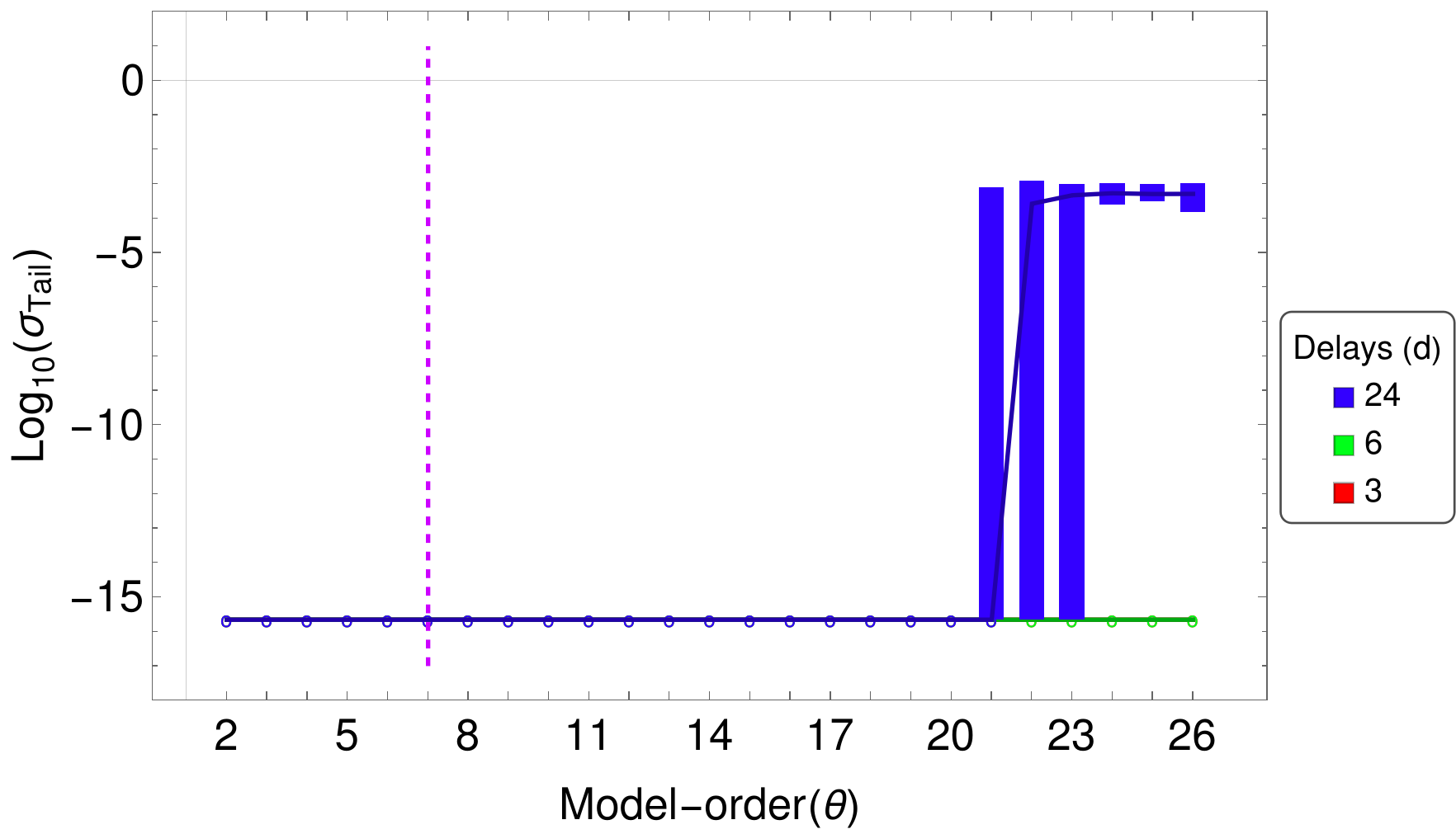}}\\
		\caption{Variation of \(\sigma_{\rm Tail}\) corresponding to the studies visualized in \cref{fig:cms_transit_knownss}.
		Given a choice of \((\theta,\fundelayscount)\), a low value of \(\sigma_{\rm Tail}\) means we have accurately computed \DistanceToDFT~ while a large value indicates that we have only calculated an upper bound.
		Hence, for all three systems,  when we only take \(3\) or \(6\) time delays, \DistanceToDFT~ is accurately computed for all model orders.
		In contrast, when \(24\) time delays are taken, \(\sigma_{\rm Tail}\) spikes at \(\theta=14,16~\textrm{and}~21\) for \(\lti_{1a},\lti_{1b}~\textrm{and}~\lti_{3}\) respectively, and remains high thereafter.
		Therefore, the concomitant computations in \cref{fig:cms_transit_knownss} for \(\fundelayscount=24\) are only accurate until the aforementioned values of \(\theta\).
		Beyond this, what is visualized in only an upper bound on \DistanceToDFT.
		}
		\label{fig:sigma_tail__knownss}
	\end{figure}

	In the Van der Pol oscillator, we see that \cref{fig:sigma_tail__VanDerPol} displays a trend similar to those seen for the LTI systems in \cref{fig:sigma_tail__knownss}.
	The value of \(\sigma_{\rm Tail}\) becomes non-trivial only when a large number of delays \((d = 25)\) are taken and the model order is at-least 21.
	Consequently, \cref{fig:DistanceToDFT__VanDerPol} is a bona fide plot of \DistanceToDFT~ for \(\fundelayscount= 3~\textrm{and}~12.\)
	Furthermore, the blue trend corresponding to \(\fundelayscount=25\) depicts \DistanceToDFT~ for \(\theta \leq 20\), and an upper bound of the same for larger values of \(\theta\).
	 \begin{figure}
		\centering
		\includegraphics[width = 0.95 \linewidth]{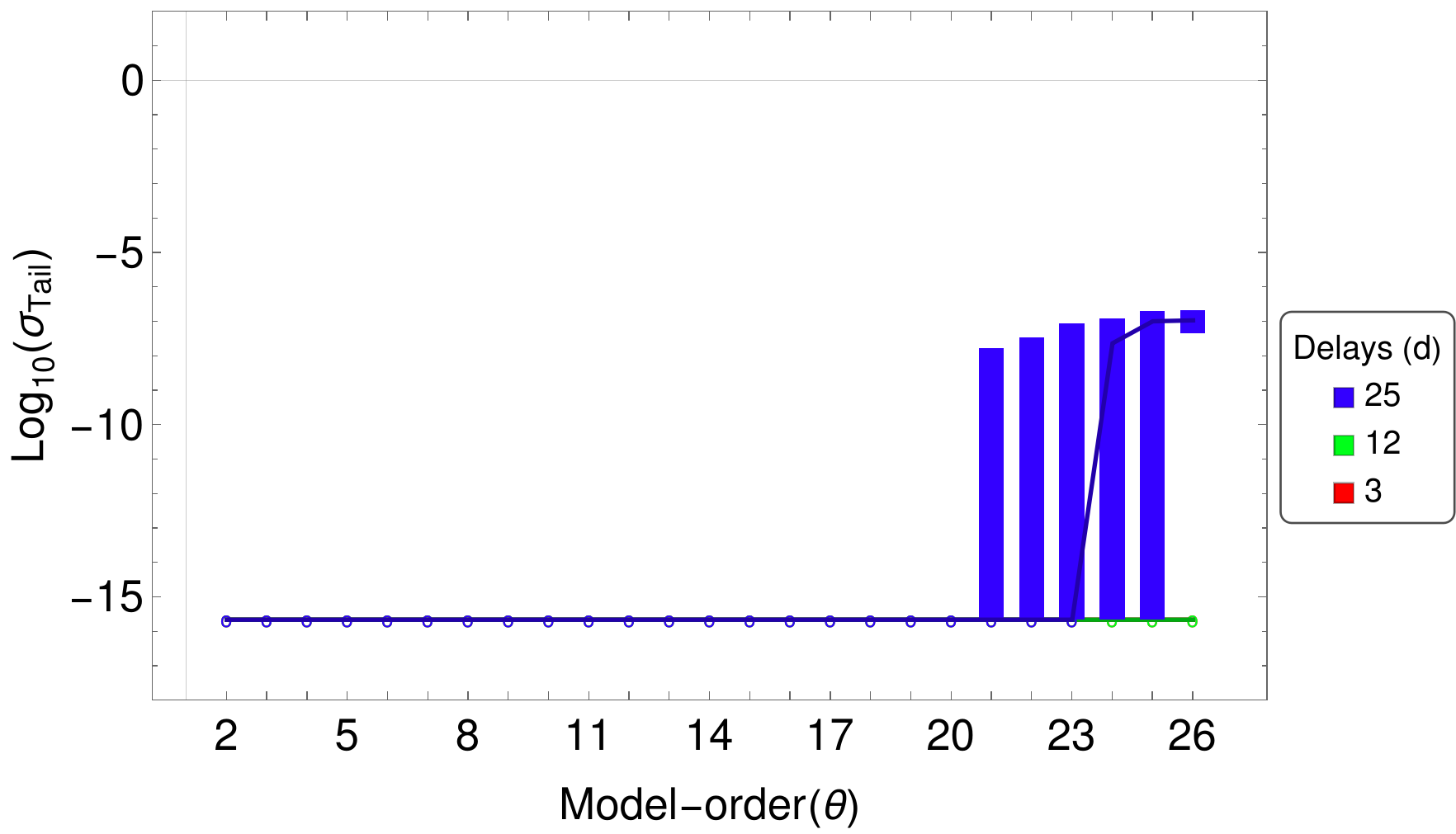}
		\caption{Variation in \(\sigma_{\rm Tail}\) for the computations with the Van der Pol oscillator in \cref{fig:DistanceToDFT__VanDerPol}.
		Unless the model order is more than \(20\) and \(\fundelayscount=25\), \(\sigma_{\rm Tail}\) remains low.
		Hence, the trends seen in \cref{fig:DistanceToDFT__VanDerPol}, particularly the spike at \(\theta=11\), are dynamically relevant and not an artifact of the low-rank approximation used in computing \DistanceToDFT.
	 }
		\label{fig:sigma_tail__VanDerPol}
	\end{figure}
	
	Finally, \cref{fig:sigma_tail__cavity} tells us that, with the exception of panel (d), \cref{fig:cms_transit_cavity} depicts only upper bounds on \DistanceToDFT.
	In panel (d) of \cref{fig:sigma_tail__cavity}, we see that \(\sigma_{\rm Tail}\) is virtually zero for all choices of model order and delay embedding dimension.
	Hence, panel (d) of \cref{fig:cms_transit_cavity} is an authentic depiction of \DistanceToDFT.
	In contrast, panels (a), (b) and (c) see \(\sigma_{\rm Tail}\) reach non-trivial values for both \(d=12\) and \(25\).
	Unfortunately, this departure from low discrepancy is the cause for panels (a) through (c) of \cref{fig:cms_transit_cavity} exhibiting a step-like trend.
	Despite this shortcoming, it is curious that said upper bounds are indicative of the growing complexity of the cavity flow.
	\begin{figure}[]
		\centering
		\subfloat[\(Re ~=~ 13\times 10^3 \) (Periodic)]{\includegraphics[width = 0.45 \linewidth]{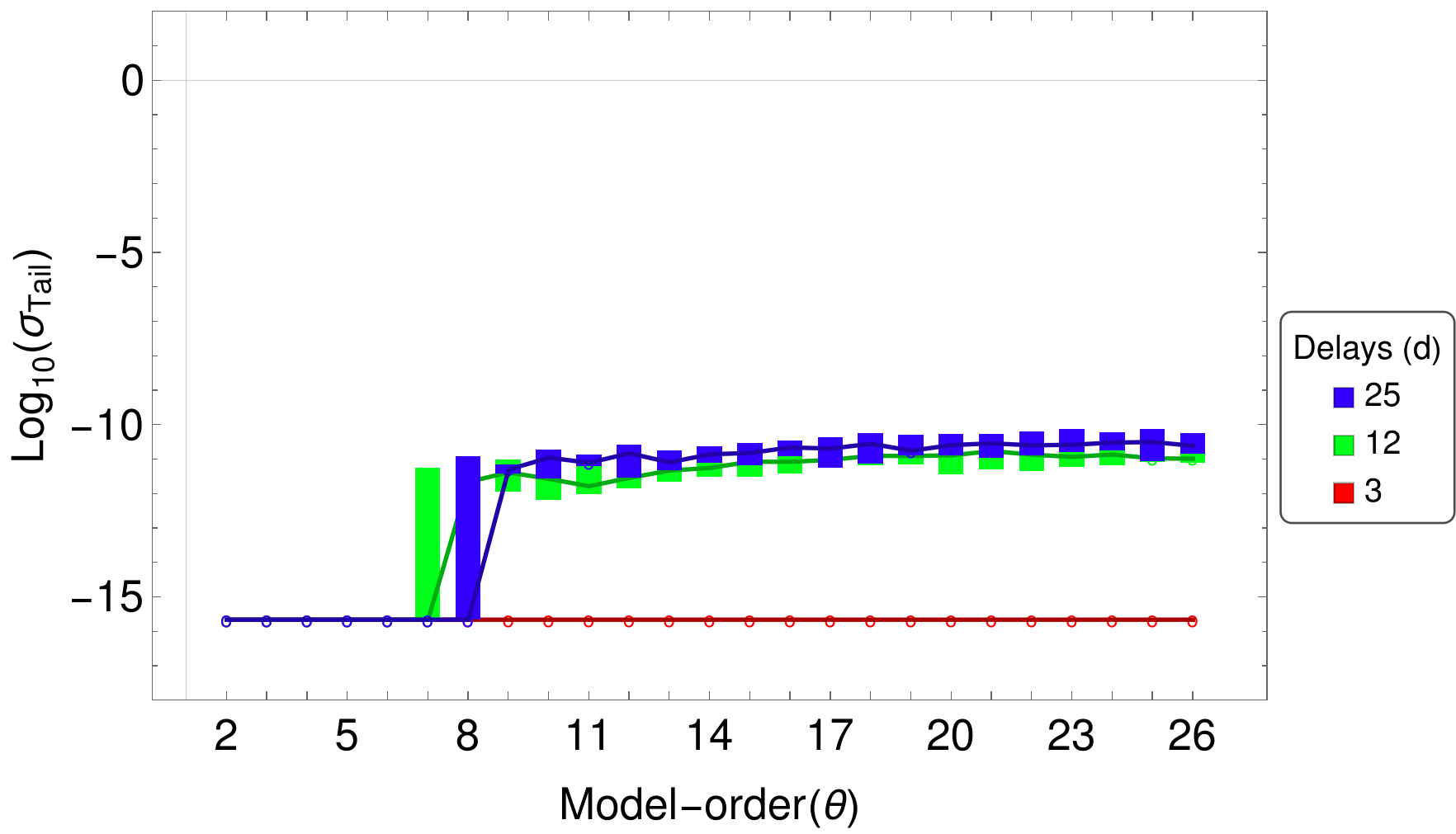}} \quad
		\subfloat[\(Re ~=~ 16\times 10^3\) (Quasi-periodic)]{\includegraphics[width = 0.45 \linewidth]{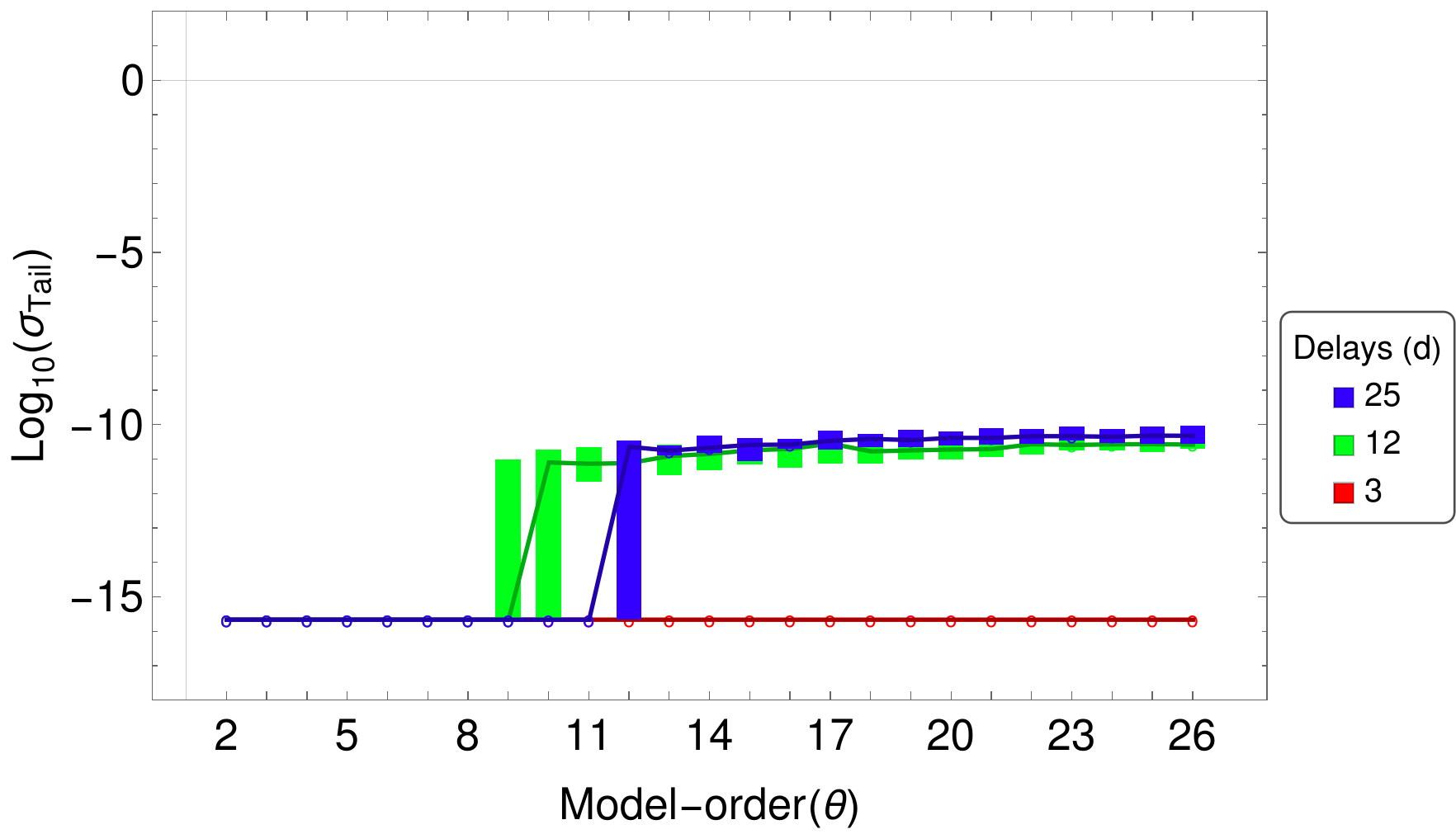}}  \\
		\subfloat[\(Re ~=~ 20\times 10^3\) (Mixed)]{\includegraphics[width = 0.45 \linewidth]{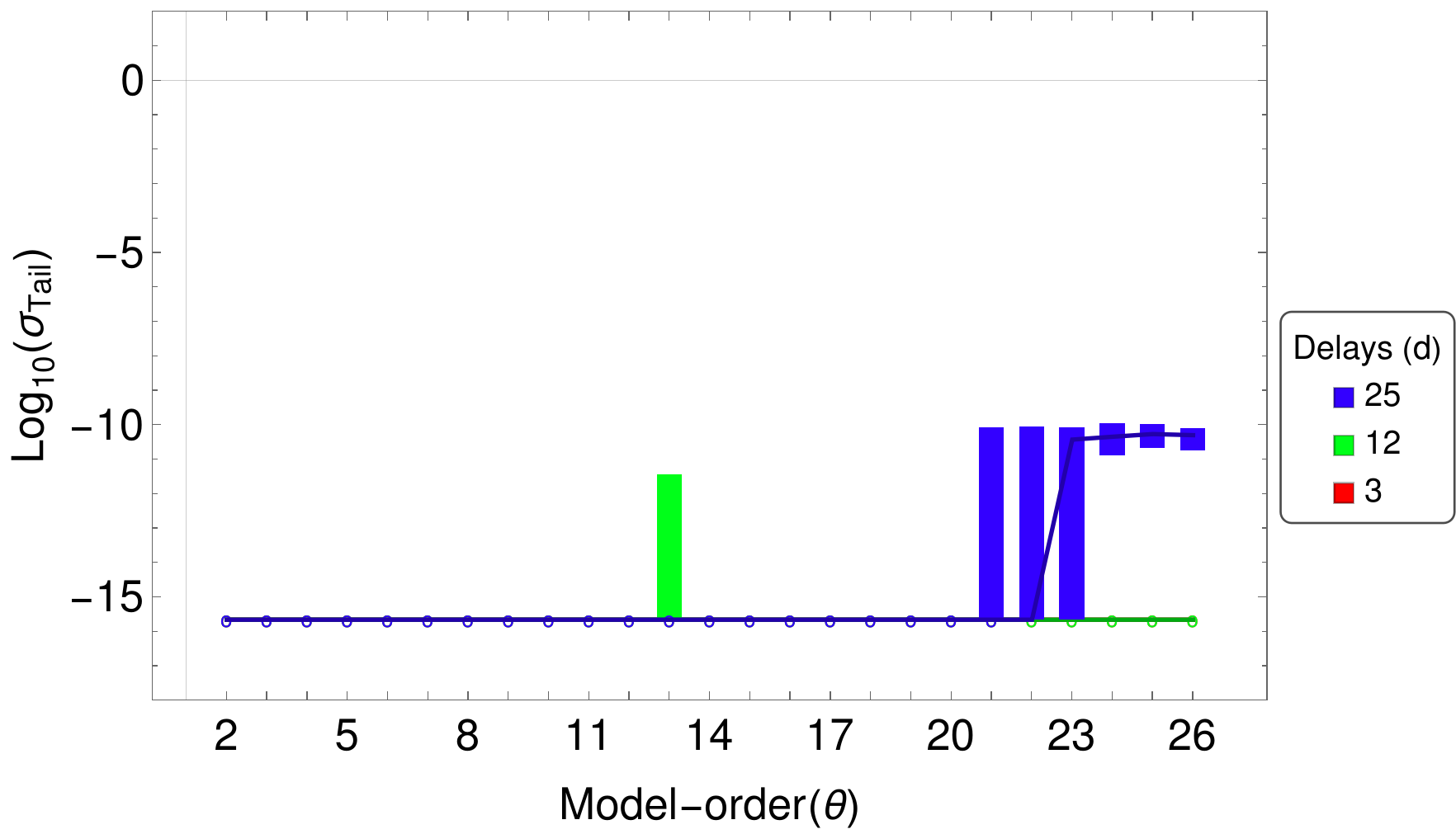}} \quad
		\subfloat[\(Re~=~ 30\times 10^3\) (Chaotic)]{\includegraphics[width = 0.45 \linewidth]{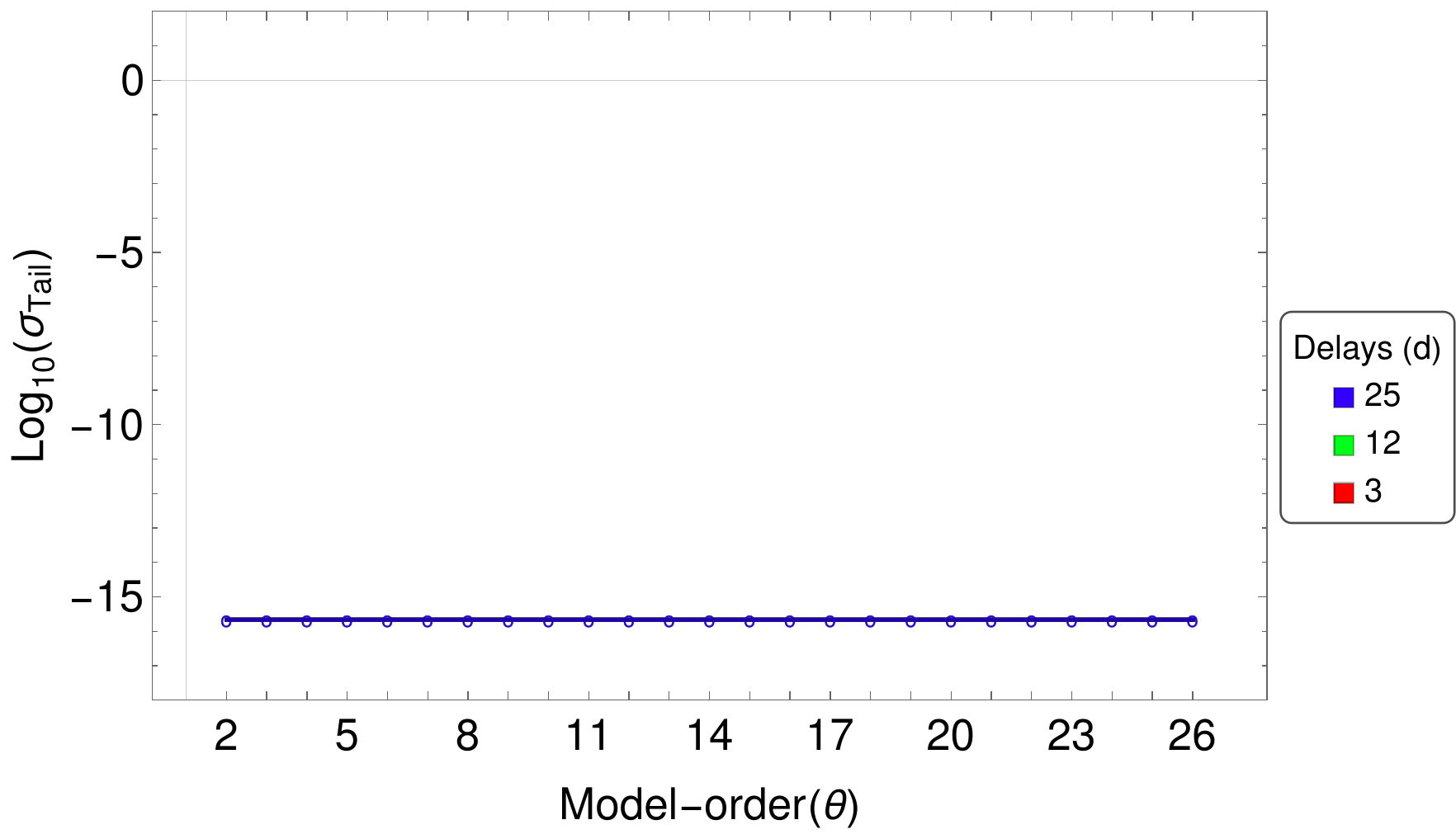}} 
		\caption{Variation of \(\sigma_{\rm Tail}\) with model-order (\(\theta\)) and delay embedding dimension (\(\fundelayscount\)) for the cavity flow studied in \cref{fig:cms_transit_cavity}.
		For Reynolds numbers between \(13 \times 10^3\) and \(20 \times 10^3\), \(\sigma_{\rm Tail}\) becomes non-trivial even if only \(12\) time delays are taken.
		Although \(\sigma_{\rm Tail}\) has a far lower plateau than seen in the preceding dynamical systems, it is safer to interpret the concomitant computations in panels (a), (b) and (c) of \cref{fig:cms_transit_cavity} as upper bounds on \DistanceToDFT.
		The case of \(Re~=~ 30\times 10^3\) is in stark contrast as \(\sigma_{\rm Tail}\) is virtually non-existent for all choices of delay-embedding dimension (\(\fundelayscount\)) and model-order (\(\theta\)).
		Hence, panel (d) in \cref{fig:cms_transit_cavity} is a faithful representation of \DistanceToDFT.
		}
		\label{fig:sigma_tail__cavity}
	\end{figure}
	
\end{document}